\newtheorem{theorem}{Theorem}
\newtheorem{corollary}{Corollary}
\newtheorem{definition}{Definition}
\newtheorem{lemma}{Lemma}
\newtheorem{proposition}{Proposition}
\newtheorem{remark}{Remark}
\newenvironment{proof}[1][Proof]{\textbf{#1.} }{\ \rule{0.5em}{0.5em}}
\begin{document}

\begin{center}
{\LARGE Linearized Boltzmann Collision Operator: I. Polyatomic Molecules
Modeled by a }

{\LARGE Discrete Internal Energy Variable and Multicomponent Mixtures}%
\bigskip

{\large Niclas Bernhoff\smallskip }

Department of Mathematics, Karlstad University, 65188 Karlstad, Sweden

niclas.bernhoff@kau.se
\end{center}

\textbf{Abstract:}{\small \ The linearized collision operator of the
Boltzmann equation can in a natural way be written as a sum of a positive
multiplication operator, the collision frequency, and an integral operator.
Compactness of the integral operator for monatomic single species is a
classical result, while corresponding result for mixtures is more recently
obtained. In this work the compactness of the operator for polyatomic single
species, where the polyatomicity is modeled by a discrete internal energy
variable, is studied. With a probabilistic formulation of the collision
operator as a starting point, compactness is obtained by proving that the
integral operator is a sum of Hilbert-Schmidt integral operators and
approximately Hilbert-Schmidt integral operators, under some assumptions on
the collision kernel. Self-adjointness of the linearized collision operator
follows. Moreover, bounds on - including coercivity of - the collision
frequency are obtained for a hard sphere model. Then it follows that the
linearized collision operator is a Fredholm operator.}

{\small The results can be extended to mixtures. For brevity, only the case
of mixtures for monatomic species is accounted for.}

\section{Introduction\label{S1}}

The Boltzmann equation is a fundamental equation of kinetic theory of gases.
Considering deviations of an equilibrium, or Maxwellian, distribution, a
linearized collision operator is obtained. The linearized collision operator
can in a natural way be written as a sum of a positive multiplication
operator, the collision frequency, and an integral operator $-K$. Compact
properties of the integral operator $K$ (for angular cut-off kernels) are
extensively studied for monatomic single species, see e.g. \cite{Gr-63,
Dr-75, Cercignani-88, LS-10}. The integral operator can be written as the
sum of a Hilbert-Schmidt integral operator and an approximately
Hilbert-Schmidt integral operator (cf. Lemma $\ref{LGD}$ in Section $\ref%
{PT1}$) \cite{Glassey}, and so compactness of the integral operator $K$ can
be obtained. More recently, compactness results were also obtained for
monatomic multi-component mixtures \cite{BGPS-13}. In this work, we consider
polyatomic single species, where the polyatomicity is modeled by a discrete
internal energy variable \cite{ErnGio-94,GS-99}. We also consider the case
of multi-component mixtures. For brevity - and clearness - we restrict
ourselves to monatomic mixtures, even if compactness is already developed in
that case in \cite{BGPS-13}. Our approach is different - maybe, also more
direct and transparent - even if also similarities are obvious.\ The
approach was crucial for us to understand the polyatomic case. In addition
to trying to make the ideas and approach more clear, our intention by
addressing the two cases separately - but in a corresponding way -, is to
make the extension to multi-component mixtures of polyatomic species, at
least at an formal level, clear. The case of polyatomic single species,
where the polyatomicity is modeled by a continuous internal energy variable
is considered in \cite{Be-21b}.

Motivated by an approach by Kogan in \cite[Sect. 2.8]{Kogan} for the
monatomic single species case, a probabilistic formulation of the collision
operator is considered as the starting point. With this approach, it is
shown, based on modified arguments from the monatomic case, that the
integral operator $K$ can be written as a sum of Hilbert-Schmidt integral
operators and approximately Hilbert-Schmidt integral operators - and so
compactness of the integral operator $K$ follows -, also for the polyatomic
model. The operator $K$ is self-adjoint, as well as the collision frequency,
why the linearized collision operator as the sum of two self-adjoint
operators of which one is bounded, is also self-adjoint.

For hard sphere models, bounds on the collision frequency are obtained. Then
the collision frequency is coercive and becomes a Fredholm operator. The set
of Fredholm operators is closed under addition with compact operators, why
also the linearized collision operator becomes a Fredholm operator by the
compactness of the integral operator $K$.

The rest of the paper is organized as follows. In Section $\ref{S2}$, the
models considered are presented. The probabilistic formulation of the
collision operators considered and its relations to more classical
formulations \cite{ErnGio-94,GS-99} are accounted for in Section $\ref%
{S2.1.1}$ for polyatomic molecules and in Section $\ref{S2.2.1}$ for
mixtures.\ Some classical results for the collision operators in Sections $%
\ref{S2.1.2}$ and $\ref{S2.2.2}$, respectively, and the linearized collision
operators in Sections $\ref{S2.1.3}$ and $\ref{S2.2.3}$, respectively, are
reviewed. Section $\ref{S3}$ is devoted to the main results of this paper,
while the main proofs are addressed in Section $\ref{S4}$; proofs of
compactness of the integral operators $K$ are presented in Sections $\ref%
{PT1}$ and $\ref{PT3}$, respectively, while proof of the bounds on the
collision frequency appears in Sections $\ref{PT2}$ and $\ref{PT4}$,
respectively. Finally, the appendix concerns a new proof of a crucial - for
the compactness in the mixture case - lemma by \cite{BGPS-13}.

\section{Models\label{S2}}

This section concerns the models considered. Probabilistic formulations of
the collision operators are considered, whose relations to more classical
formulations are accounted for. Known properties of the models and
corresponding linearized collision operators are also reviewed.

\subsection{Polyatomic molecules modeled by a discrete internal energy
variable\label{S2.1}}

Consider a single species of polyatomic molecules with mass $m$, where the
polyatomicity is modeled by $r$ different internal energies $I_{1},...,I_{r}$%
. The internal energies $I_{i}$, $i\in \left\{ 1,...,r\right\} $, are
assumed to be nonnegative real numbers; $\left\{ I_{1},...,I_{r}\right\}
\subset $ $\mathbb{R}_{+}$. The distribution functions are of the form $%
f=\left( f_{1},...,f_{r}\right) $, where $f_{i}=f_{i}\left( t,\mathbf{x},%
\boldsymbol{\xi }\right) =f\left( t,\mathbf{x},\boldsymbol{\xi }%
,I_{i}\right) $, with $t\in \mathbb{R}_{+}$, $\mathbf{x}=\left( x,y,z\right)
\in \mathbb{R}^{3}$, and $\boldsymbol{\xi }=\left( \xi _{x},\xi _{y},\xi
_{z}\right) \in \mathbb{R}^{3}$.

Moreover, consider the real Hilbert space $\mathcal{\mathfrak{h}}%
^{(r)}:=\left( L^{2}\left( d\boldsymbol{\xi }\right) \right) ^{r}$, with
inner product%
\begin{equation*}
\left( f,g\right) =\sum_{i=1}^{r}\int_{\mathbb{R}^{3}}f_{i}g_{i}\,d%
\boldsymbol{\xi }\text{, }f,g\in \left( L^{2}\left( d\boldsymbol{\xi }%
\right) \right) ^{r}\text{.}
\end{equation*}

The evolution of the distribution functions is (in the absence of external
forces) described by the (vector) Boltzmann equation%
\begin{equation}
\frac{\partial f}{\partial t}+\left( \boldsymbol{\xi }\cdot \nabla _{\mathbf{%
x}}\right) f=Q\left( f,f\right) \text{,}  \label{BE1}
\end{equation}%
where the (vector) collision operator $Q=\left( Q_{1},...,Q_{r}\right) $ is
a quadratic bilinear operator that accounts for the change of velocities and
internal energies of particles due to binary collisions (assuming that the
gas is rarefied, such that other collisions are negligible).

\subsubsection{Collision operator\label{S2.1.1}}

The (vector) collision operator $Q=\left( Q_{1},...,Q_{r}\right) $ has
components that can be written in the following form 
\begin{equation*}
Q_{i}(f,f)=\sum\limits_{j,k,l=1}^{r}\int_{\left( \mathbb{R}^{3}\right)
^{3}}W(\boldsymbol{\xi },\boldsymbol{\xi }_{\ast },I_{i},I_{j}\left\vert 
\boldsymbol{\xi }^{\prime },\boldsymbol{\xi }_{\ast }^{\prime
},I_{k},I_{l}\right. )\left( \frac{f_{k}^{\prime }f_{l\ast }^{\prime }}{%
\varphi _{k}\varphi _{l}}-\frac{f_{i}f_{j\ast }}{\varphi _{i}\varphi _{j}}%
\right) d\boldsymbol{\xi }_{\ast }d\boldsymbol{\xi }^{\prime }d\boldsymbol{%
\xi }_{\ast }^{\prime }\!\text{,}
\end{equation*}%
for some constant $\varphi =\left( \varphi _{1},...,\varphi _{r}\right) \in 
\mathbb{R}^{r}$ . Here and below the abbreviations%
\begin{equation}
f_{i\ast }=f_{i}\left( t,\mathbf{x,}\boldsymbol{\xi }_{\ast }\right) \text{, 
}f_{i}^{\prime }=f_{i}\left( t,\mathbf{x,}\boldsymbol{\xi }^{\prime }\right) 
\text{, and }f_{i\ast }^{\prime }=f_{i}\left( t,\mathbf{x,}\boldsymbol{\xi }%
_{\ast }^{\prime }\right)  \label{a1}
\end{equation}%
are used. The transition probabilities are of the form, cf. \cite{HD-69}, as
well as \cite{Kogan, SNB-85, BPS-90} for the monatomic case, 
\begin{eqnarray}
&&W(\boldsymbol{\xi },\boldsymbol{\xi }_{\ast },I_{i},I_{j}\left\vert 
\boldsymbol{\xi }^{\prime },\boldsymbol{\xi }_{\ast }^{\prime
},I_{k},I_{l}\right. )  \notag \\
&=&4m\varphi _{k}\varphi _{l}\sigma _{kl}^{ij}\frac{\left\vert \mathbf{g}%
^{\prime }\right\vert }{\left\vert \mathbf{g}\right\vert }\delta _{3}\left( 
\boldsymbol{\xi }+\boldsymbol{\xi }_{\ast }-\boldsymbol{\xi }^{\prime }-%
\boldsymbol{\xi }_{\ast }^{\prime }\right)  \notag \\
&&\times \delta _{1}\left( \frac{m}{2}\left( \left\vert \boldsymbol{\xi }%
\right\vert ^{2}+\left\vert \boldsymbol{\xi }_{\ast }\right\vert
^{2}-\left\vert \boldsymbol{\xi }^{\prime }\right\vert ^{2}-\left\vert 
\boldsymbol{\xi }_{\ast }^{\prime }\right\vert ^{2}\right) -\Delta
I_{ij}^{kl}\right)  \notag \\
&=&4m\varphi _{i}\varphi _{j}\sigma _{ij}^{kl}\frac{\left\vert \mathbf{g}%
\right\vert }{\left\vert \mathbf{g}^{\prime }\right\vert }\delta _{3}\left( 
\boldsymbol{\xi }+\boldsymbol{\xi }_{\ast }-\boldsymbol{\xi }^{\prime }-%
\boldsymbol{\xi }_{\ast }^{\prime }\right)  \notag \\
&&\times \delta _{1}\left( \frac{m}{2}\left( \left\vert \boldsymbol{\xi }%
\right\vert ^{2}+\left\vert \boldsymbol{\xi }_{\ast }\right\vert
^{2}-\left\vert \boldsymbol{\xi }^{\prime }\right\vert ^{2}-\left\vert 
\boldsymbol{\xi }_{\ast }^{\prime }\right\vert ^{2}\right) -\Delta
I_{ij}^{kl}\right) \text{,}  \notag \\
\sigma _{ij}^{kl} &=&\sigma _{ij}^{kl}\left( \left\vert \mathbf{g}%
\right\vert ,\left\vert \cos \theta \right\vert \right) >0\text{ and }\sigma
_{kl}^{ij}=\sigma _{kl}^{ij}\left( \left\vert \mathbf{g}^{\prime
}\right\vert ,\left\vert \cos \theta \right\vert \right) >0\text{ a.e., with 
}  \notag \\
\cos \theta &=&\frac{\mathbf{g}\cdot \mathbf{g}^{\prime }}{\left\vert 
\mathbf{g}\right\vert \left\vert \mathbf{g}^{\prime }\right\vert }\text{, }%
\mathbf{g}=\boldsymbol{\xi }-\boldsymbol{\xi }_{\ast }\!\text{, }\mathbf{g}%
^{\prime }=\boldsymbol{\xi }^{\prime }-\boldsymbol{\xi }_{\ast }^{\prime }\!%
\text{, and }\Delta I_{ij}^{kl}=I_{k}+I_{l}-I_{i}-I_{j}\text{,}  \label{tp}
\end{eqnarray}%
where $\delta _{3}$ and $\delta _{1}$ denote the Dirac's delta function in $%
\mathbb{R}^{3}$ and $\mathbb{R}$, respectively; taking the conservation of
momentum and total energy into account. Here it is assumed that the
scattering cross sections $\sigma _{ij}^{kl}$, $\left\{ i,j,k,l\right\}
\subseteq \left\{ 1,...,r\right\} $, satisfy the microreversibility
conditions%
\begin{equation}
\varphi _{i}\varphi _{j}\left\vert \mathbf{g}\right\vert ^{2}\sigma
_{ij}^{kl}\left( \left\vert \mathbf{g}\right\vert ,\left\vert \cos \theta
\right\vert \right) =\varphi _{k}\varphi _{l}\left\vert \mathbf{g}^{\prime
}\right\vert ^{2}\sigma _{kl}^{ij}\left( \left\vert \mathbf{g}^{\prime
}\right\vert ,\left\vert \cos \theta \right\vert \right) \text{.}  \label{mr}
\end{equation}%
Furthermore, to obtain invariance of \ change of particles in a collision,
it is assumed that the scattering cross sections $\sigma _{ij}^{kl}$, $%
\left\{ i,j,k,l\right\} \subseteq \left\{ 1,...,r\right\} $, satisfy the
symmetry relations 
\begin{equation}
\sigma _{ij}^{kl}=\sigma _{ij}^{lk}=\sigma _{ji}^{lk}\text{.}  \label{sr}
\end{equation}%
The invariance under change of particles in a collision, which follows
directly by the definition of the transition probability $\left( \ref{tp}%
\right) $ and the symmetry relations $\left( \ref{sr}\right) $ for the
collision frequency, and the microreversibility of the collisions $\left( %
\ref{mr}\right) $, implies that the transition probabilities $\left( \ref{tp}%
\right) $ satisfy the relations

\begin{eqnarray}
W(\boldsymbol{\xi },\boldsymbol{\xi }_{\ast },I_{i},I_{j}\left\vert 
\boldsymbol{\xi }^{\prime },\boldsymbol{\xi }_{\ast }^{\prime
},I_{k},I_{l}\right. ) &=&W(\boldsymbol{\xi }_{\ast },\boldsymbol{\xi }%
,I_{j},I_{i}\left\vert \boldsymbol{\xi }_{\ast }^{\prime },\boldsymbol{\xi }%
^{\prime },I_{l},I_{k}\right. )  \notag \\
W(\boldsymbol{\xi },\boldsymbol{\xi }_{\ast },I_{i},I_{j}\left\vert 
\boldsymbol{\xi }^{\prime },\boldsymbol{\xi }_{\ast }^{\prime
},I_{k},I_{l}\right. ) &=&W(\boldsymbol{\xi }^{\prime },\boldsymbol{\xi }%
_{\ast }^{\prime },I_{k},I_{l}\left\vert \boldsymbol{\xi },\boldsymbol{\xi }%
_{\ast },I_{i},I_{j}\right. )  \notag \\
W(\boldsymbol{\xi },\boldsymbol{\xi }_{\ast },I_{i},I_{j}\left\vert 
\boldsymbol{\xi }^{\prime },\boldsymbol{\xi }_{\ast }^{\prime
},I_{k},I_{l}\right. ) &=&W(\boldsymbol{\xi },\boldsymbol{\xi }_{\ast
},I_{i},I_{j}\left\vert \boldsymbol{\xi }_{\ast }^{\prime },\boldsymbol{\xi }%
^{\prime },I_{l},I_{k}\right. )\text{.}  \label{rel1}
\end{eqnarray}

Applying known properties of Dirac's delta function, the transition
probabilities may be transformed to 
\begin{eqnarray*}
&&W(\boldsymbol{\xi },\boldsymbol{\xi }_{\ast },I_{i},I_{j}\left\vert 
\boldsymbol{\xi }^{\prime },\boldsymbol{\xi }_{\ast }^{\prime
},I_{k},I_{l}\right. ) \\
&=&4m\varphi _{k}\varphi _{l}\sigma _{kl}^{ij}\frac{\left\vert \mathbf{g}%
^{\prime }\right\vert }{\left\vert \mathbf{g}\right\vert }\delta _{3}\left(
2\left( \mathbf{G}-\mathbf{G}^{\prime }\right) \right) \delta _{1}\left( 
\frac{m}{4}\left( \left\vert \mathbf{g}\right\vert ^{2}-\left\vert \mathbf{g}%
^{\prime }\right\vert ^{2}\right) -\Delta I_{ij}^{kl}\right) \\
&=&2\varphi _{k}\varphi _{l}\sigma _{kl}^{ij}\frac{\left\vert \mathbf{g}%
^{\prime }\right\vert }{\left\vert \mathbf{g}\right\vert }\delta _{3}\left( 
\mathbf{G}-\mathbf{G}^{\prime }\right) \delta _{1}\left( \left( \left\vert 
\mathbf{g}\right\vert ^{2}-\left\vert \mathbf{g}^{\prime }\right\vert
^{2}\right) -\frac{4}{m}\Delta I_{ij}^{kl}\right) \\
&=&\varphi _{k}\varphi _{l}\sigma _{kl}^{ij}\frac{1}{\left\vert \mathbf{g}%
\right\vert }\mathbf{1}_{m\left\vert \mathbf{g}\right\vert ^{2}>4\Delta
I_{ij}^{kl}}\delta _{3}\left( \mathbf{G}-\mathbf{G}^{\prime }\right) \delta
_{1}\left( \sqrt{\left\vert \mathbf{g}\right\vert ^{2}-\frac{4}{m}\Delta
I_{ij}^{kl}}-\left\vert \mathbf{g}^{\prime }\right\vert \right) \\
&=&\varphi _{i}\varphi _{j}\sigma _{ij}^{kl}\frac{\left\vert \mathbf{g}%
\right\vert }{\left\vert \mathbf{g}^{\prime }\right\vert ^{2}}\mathbf{1}%
_{m\left\vert \mathbf{g}\right\vert ^{2}>4\Delta I_{ij}^{kl}}\delta
_{3}\left( \mathbf{G}-\mathbf{G}^{\prime }\right) \delta _{1}\left( \sqrt{%
\left\vert \mathbf{g}\right\vert ^{2}-\frac{4}{m}\Delta I_{ij}^{kl}}%
-\left\vert \mathbf{g}^{\prime }\right\vert \right) \text{,} \\
\text{ } &&\text{with }\mathbf{G}=\frac{\boldsymbol{\xi }+\boldsymbol{\xi }%
_{\ast }}{2}\text{ and }\mathbf{G}^{\prime }=\frac{\boldsymbol{\xi }^{\prime
}+\boldsymbol{\xi }_{\ast }^{\prime }}{2}\text{.}
\end{eqnarray*}

\begin{remark}
Note that, cf. \cite{HD-69},%
\begin{equation*}
\delta _{1}\left( \frac{m}{4}\left( \left\vert \mathbf{g}\right\vert
^{2}-\left\vert \mathbf{g}^{\prime }\right\vert ^{2}\right) -\Delta
I_{ij}^{kl}\right) =\delta _{1}\left( E_{ij}-E_{kl}\right) ,
\end{equation*}%
for $E_{ij}=\dfrac{m}{4}\left\vert \mathbf{g}\right\vert ^{2}+I_{i}+I_{j}\ $%
\ and $E_{kl}=\dfrac{m}{4}\left\vert \mathbf{g}^{\prime }\right\vert
^{2}+I_{k}+I_{l}.$
\end{remark}

By a change of variables $\left\{ \boldsymbol{\xi }^{\prime },\boldsymbol{%
\xi }_{\ast }^{\prime }\right\} \rightarrow \left\{ \mathbf{g}^{\prime }=%
\boldsymbol{\xi }^{\prime }-\boldsymbol{\xi }_{\ast }^{\prime },\mathbf{G}%
^{\prime }=\dfrac{\boldsymbol{\xi }^{\prime }+\boldsymbol{\xi }_{\ast
}^{\prime }}{2}\right\} $, noting that%
\begin{equation}
d\boldsymbol{\xi }^{\prime }d\boldsymbol{\xi }_{\ast }^{\prime }=d\mathbf{G}%
^{\prime }d\mathbf{g}^{\prime }=\left\vert \mathbf{g}^{\prime }\right\vert
^{2}d\mathbf{G}^{\prime }d\left\vert \mathbf{g}^{\prime }\right\vert d%
\boldsymbol{\omega }\text{, }\boldsymbol{\omega }=\frac{\mathbf{g}^{\prime }%
}{\left\vert \mathbf{g}^{\prime }\right\vert }\text{,}  \label{df1}
\end{equation}%
the observation that%
\begin{eqnarray*}
&&Q_{i}(f,f) \\
&=&\sum\limits_{j,k,l=1}^{r}\int_{\left( \mathbb{R}^{3}\right) ^{2}\times 
\mathbb{R}_{+}\mathbb{\times S}^{2}}W(\boldsymbol{\xi },\boldsymbol{\xi }%
_{\ast },I_{i},I_{j}\left\vert \boldsymbol{\xi }^{\prime },\boldsymbol{\xi }%
_{\ast }^{\prime },I_{k},I_{l}\right. ) \\
&&\times \left( \frac{f_{k}^{\prime }f_{l\ast }^{\prime }}{\varphi
_{k}\varphi _{l}}-\frac{f_{i}f_{j\ast }}{\varphi _{i}\varphi _{j}}\right)
\,\left\vert \mathbf{g}^{\prime }\right\vert ^{2}\,d\boldsymbol{\xi }_{\ast
}d\mathbf{G}^{\prime }d\left\vert \mathbf{g}^{\prime }\right\vert d%
\boldsymbol{\omega } \\
&=&\sum\limits_{j,k,l=1}^{r}\int_{\mathbb{R}^{3}\mathbb{\times S}%
^{2}}\left\vert \mathbf{g}\right\vert \sigma _{ij}^{kl}\left( \left\vert 
\mathbf{g}\right\vert ,\cos \theta \right) \left( f_{k}^{\prime }f_{l\ast
}^{\prime }\frac{\varphi _{i}\varphi _{j}}{\varphi _{k}\varphi _{l}}%
-f_{i}f_{j\ast }\right) \mathbf{1}_{m\left\vert \mathbf{g}\right\vert
^{2}>4\Delta I_{ij}^{kl}}\,d\boldsymbol{\xi }_{\ast }d\boldsymbol{\omega ,}
\end{eqnarray*}%
where%
\begin{equation*}
\left\{ 
\begin{array}{l}
\boldsymbol{\xi }^{\prime }=\dfrac{\boldsymbol{\xi }+\boldsymbol{\xi }_{\ast
}}{2}+\dfrac{\sqrt{\left\vert \boldsymbol{\xi }-\boldsymbol{\xi }_{\ast
}\right\vert ^{2}-\dfrac{4}{m}\Delta I_{ij}^{kl}}}{2}\omega =\mathbf{G}+%
\dfrac{\sqrt{\left\vert \mathbf{g}\right\vert ^{2}-\dfrac{4}{m}\Delta
I_{ij}^{kl}}}{2}\omega \medskip \\ 
\boldsymbol{\xi }_{\ast }^{\prime }=\dfrac{\boldsymbol{\xi }+\boldsymbol{\xi 
}_{\ast }}{2}-\dfrac{\sqrt{\left\vert \boldsymbol{\xi }-\boldsymbol{\xi }%
_{\ast }\right\vert ^{2}-\dfrac{4}{m}\Delta I_{ij}^{kl}}}{2}\omega =\mathbf{G%
}-\dfrac{\sqrt{\left\vert \mathbf{g}\right\vert ^{2}-\dfrac{4}{m}\Delta
I_{ij}^{kl}}}{2}\omega%
\end{array}%
\right. \text{, }\omega \in S^{2}\text{,}
\end{equation*}%
can be made, resulting in a more familiar form of the Boltzmann collision
operator for polyatomic molecules modeled with a discrete energy variable,
cf. e.g. \cite{ErnGio-94,GS-99}.

\subsubsection{Collision invariants and Maxwellian distributions\label%
{S2.1.2}}

The following lemma follows directly by the relations $\left( \ref{rel1}%
\right) $.

\begin{lemma}
\label{L0}For any $\left\{ i,j,k,l\right\} \subseteq \left\{ 1,\ldots
,r\right\} $ the measure 
\begin{equation*}
dA_{ij}^{kl}=W(\boldsymbol{\xi },\boldsymbol{\xi }_{\ast
},I_{i},I_{j}\left\vert \boldsymbol{\xi }^{\prime },\boldsymbol{\xi }_{\ast
}^{\prime },I_{k},I_{l}\right. )\,d\boldsymbol{\xi }\,d\boldsymbol{\xi }%
_{\ast }d\boldsymbol{\xi }^{\prime }d\boldsymbol{\xi }_{\ast }^{\prime }
\end{equation*}%
is invariant under the interchanges of variables%
\begin{eqnarray}
\left( \boldsymbol{\xi },\boldsymbol{\xi }_{\ast },I_{i},I_{j}\right)
&\leftrightarrow &\left( \boldsymbol{\xi }^{\prime },\boldsymbol{\xi }_{\ast
}^{\prime },I_{k},I_{l}\right) \text{,}  \notag \\
\left( \boldsymbol{\xi },I_{i}\right) &\leftrightarrow &\left( \boldsymbol{%
\xi }_{\ast },I_{j}\right) \text{, and}  \notag \\
\left( \boldsymbol{\xi }^{\prime },I_{k}\right) &\leftrightarrow &\left( 
\boldsymbol{\xi }_{\ast }^{\prime },I_{l}\right) \text{,}  \label{tr}
\end{eqnarray}%
respectively.
\end{lemma}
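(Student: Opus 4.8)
The plan is to exploit the product structure of the measure. Writing
$dA_{ij}^{kl}=W(\boldsymbol{\xi},\boldsymbol{\xi}_{\ast},I_{i},I_{j}\mid\boldsymbol{\xi}^{\prime},\boldsymbol{\xi}_{\ast}^{\prime},I_{k},I_{l})\,d\boldsymbol{\xi}\,d\boldsymbol{\xi}_{\ast}\,d\boldsymbol{\xi}^{\prime}\,d\boldsymbol{\xi}_{\ast}^{\prime}$,
I would treat the transition probability $W$ and the product differential separately, and check that each factor is left unchanged by every one of the three interchanges in $(\ref{tr})$. Invariance of the product then follows factor by factor.

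First I would observe that $d\boldsymbol{\xi}\,d\boldsymbol{\xi}_{\ast}\,d\boldsymbol{\xi}^{\prime}\,d\boldsymbol{\xi}_{\ast}^{\prime}$ is simply the Lebesgue product measure on $\left(\mathbb{R}^{3}\right)^{4}$, which is symmetric under any permutation of the four velocity variables. Each of the three interchanges in $(\ref{tr})$ merely permutes the velocity arguments (the internal-energy values being carried along as labels), so the differential factor is trivially invariant in all three cases. The entire content of the lemma therefore reduces to the invariance of $W$, which is exactly what the relations $(\ref{rel1})$ encode.

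Next I would match the interchanges to those relations. Labelling the four argument slots $1=(\boldsymbol{\xi},I_{i})$, $2=(\boldsymbol{\xi}_{\ast},I_{j})$, $3=(\boldsymbol{\xi}^{\prime},I_{k})$, $4=(\boldsymbol{\xi}_{\ast}^{\prime},I_{l})$, the three identities in $(\ref{rel1})$ state that $W$ is invariant under the permutations $(1\,2)(3\,4)$, $(1\,3)(2\,4)$ and $(3\,4)$, respectively. The first interchange in $(\ref{tr})$ is exactly $(1\,3)(2\,4)$, i.e.\ the microreversibility relation, and the third interchange is exactly $(3\,4)$. The only interchange not literally among the listed relations is the second one, $(\boldsymbol{\xi},I_{i})\leftrightarrow(\boldsymbol{\xi}_{\ast},I_{j})$, namely the transposition $(1\,2)$; I would obtain it by composing the first and third relations, since $(1\,2)=(1\,2)(3\,4)\cdot(3\,4)$.

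The argument is essentially bookkeeping, so I do not expect a genuine obstacle; the only point deserving care is the second interchange, where one must notice that swapping the incoming pair alone is the composite of the ``swap both pairs'' relation with the ``swap the outgoing pair'' relation, rather than a directly stated identity. Having verified invariance of both factors under each interchange, I would conclude that $dA_{ij}^{kl}$ is invariant under all three interchanges in $(\ref{tr})$, which completes the proof.
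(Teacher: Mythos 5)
Your proposal is correct and takes essentially the same route as the paper, whose entire proof is the remark that the lemma ``follows directly by the relations $\left( \ref{rel1}\right) $'': the Lebesgue factor is permutation-invariant, and the three interchanges in $\left( \ref{tr}\right) $ are covered by the second relation, the composition of the first and third relations, and the third relation of $\left( \ref{rel1}\right) $, respectively. Your explicit bookkeeping point -- that $\left( \boldsymbol{\xi },I_{i}\right) \leftrightarrow \left( \boldsymbol{\xi }_{\ast },I_{j}\right) $ is not listed in $\left( \ref{rel1}\right) $ but arises as the composite of the ``swap both pairs'' and ``swap the outgoing pair'' relations -- is exactly the detail the paper leaves implicit.
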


The weak form of the collision operator $Q(f,f)$ reads%
\begin{eqnarray*}
\left( Q(f,f),g\right) &=&\sum\limits_{i,j,k,l=1}^{r}\int_{\left( \mathbb{R}%
^{3}\right) ^{4}}\left( \frac{f_{k}^{\prime }f_{l\ast }^{\prime }}{\varphi
_{k}\varphi _{l}}-\frac{f_{i}f_{j\ast }}{\varphi _{i}\varphi _{j}}\right)
g_{i}\,dA_{ij}^{kl} \\
&=&\sum\limits_{i,j,k,l=1}^{r}\int_{\left( \mathbb{R}^{3}\right) ^{4}}\left( 
\frac{f_{k}^{\prime }f_{l\ast }^{\prime }}{\varphi _{k}\varphi _{l}}-\frac{%
f_{i}f_{j\ast }}{\varphi _{i}\varphi _{j}}\right) g_{j\ast }\,dA_{ij}^{kl} \\
&=&-\sum\limits_{i,j,k,l=1}^{r}\int_{\left( \mathbb{R}^{3}\right)
^{4}}\left( \frac{f_{k}^{\prime }f_{l\ast }^{\prime }}{\varphi _{k}\varphi
_{l}}-\frac{f_{i}f_{j\ast }}{\varphi _{i}\varphi _{j}}\right) g_{k}^{\prime
}\,dA_{ij}^{kl} \\
&=&-\sum\limits_{i,j,k,l=1}^{r}\int_{\left( \mathbb{R}^{3}\right)
^{4}}\left( \frac{f_{k}^{\prime }f_{l\ast }^{\prime }}{\varphi _{k}\varphi
_{l}}-\frac{f_{i}f_{j\ast }}{\varphi _{i}\varphi _{j}}\right) g_{l\ast
}^{\prime }\,dA_{ij}^{kl},
\end{eqnarray*}%
for any function $g=(g_{1},...,g_{r})$, such that the first integrals are
defined for all $\left\{ i,j,k,l\right\} \subseteq \left\{ 1,\ldots
,r\right\} $, while the following equalities are obtained by applying Lemma $%
\ref{L0}$.

We have the following proposition.

\begin{proposition}
\label{P1}Let $g=(g_{1},...,g_{r})$ be such that 
\begin{equation*}
\int_{\left( \mathbb{R}^{3}\right) ^{4}}\left( \frac{f_{k}^{\prime }f_{l\ast
}^{\prime }}{\varphi _{k}\varphi _{l}}-\frac{f_{i}f_{j\ast }}{\varphi
_{i}\varphi _{j}}\right) g_{i}\,dA_{ij}^{kl}
\end{equation*}%
is defined for all $\left\{ i,j,k,l\right\} \subseteq \left\{ 1,\ldots
,r\right\} $. Then%
\begin{equation*}
\left( Q(f,f),g\right) =\frac{1}{4}\sum\limits_{i,j,k,l=1}^{r}\int_{\left( 
\mathbb{R}^{3}\right) ^{4}}\left( \frac{f_{k}^{\prime }f_{l\ast }^{\prime }}{%
\varphi _{k}\varphi _{l}}-\frac{f_{i}f_{j\ast }}{\varphi _{i}\varphi _{j}}%
\right) \left( g_{i}+g_{j\ast }-g_{k}^{\prime }-g_{l\ast }^{\prime }\right)
\,dA_{ij}^{kl}.
\end{equation*}
\end{proposition}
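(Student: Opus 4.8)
The plan is to obtain the identity by symmetrizing: I would simply average the four equivalent representations of $\left( Q(f,f),g\right) $ that are displayed immediately before the statement. Under the stated hypothesis the first of those integrals is well-defined for every quadruple $\left\{ i,j,k,l\right\} \subseteq \left\{ 1,\ldots ,r\right\} $, and the other three are produced from it by the three interchanges of Lemma $\ref{L0}$; since the measure $dA_{ij}^{kl}$ is invariant under each interchange, these three integrals are also well-defined and each equals $\left( Q(f,f),g\right) $.

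First I would record the first representation directly from the definition of the inner product and of the components $Q_{i}$, namely $\left( Q(f,f),g\right) =\sum_{i=1}^{r}\int_{\mathbb{R}^{3}}Q_{i}(f,f)\,g_{i}\,d\boldsymbol{\xi }$, which upon inserting the transition-probability form of $Q_{i}$ is exactly the expression carrying the weight $g_{i}$. Then I would read off the three alternative weights from Lemma $\ref{L0}$. The interchange $\left( \boldsymbol{\xi },I_{i}\right) \leftrightarrow \left( \boldsymbol{\xi }_{\ast },I_{j}\right) $ leaves $dA_{ij}^{kl}$ invariant and replaces the weight $g_{i}$ by $g_{j\ast }$; here I must check that the round bracket $\left( \dfrac{f_{k}^{\prime }f_{l\ast }^{\prime }}{\varphi _{k}\varphi _{l}}-\dfrac{f_{i}f_{j\ast }}{\varphi _{i}\varphi _{j}}\right) $ is unchanged, which holds because simultaneously swapping $i\leftrightarrow j$ and $\boldsymbol{\xi }\leftrightarrow \boldsymbol{\xi }_{\ast }$ sends the product $f_{i}f_{j\ast }$ to $f_{j}f_{i\ast }$ (the same product) and $\varphi _{i}\varphi _{j}$ is symmetric. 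The interchange $\left( \boldsymbol{\xi },\boldsymbol{\xi }_{\ast },I_{i},I_{j}\right) \leftrightarrow \left( \boldsymbol{\xi }^{\prime },\boldsymbol{\xi }_{\ast }^{\prime },I_{k},I_{l}\right) $ again preserves the measure but now flips the sign of the round bracket and replaces $g_{i}$ by $g_{k}^{\prime }$, giving the representation with weight $-g_{k}^{\prime }$; composing the two interchanges yields the weight $-g_{l\ast }^{\prime }$.

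Finally I would add the four expressions and divide by four. The common factor $\left( \dfrac{f_{k}^{\prime }f_{l\ast }^{\prime }}{\varphi _{k}\varphi _{l}}-\dfrac{f_{i}f_{j\ast }}{\varphi _{i}\varphi _{j}}\right) $ and the measure $dA_{ij}^{kl}$ are shared by all four, while the four weights assemble into $g_{i}+g_{j\ast }-g_{k}^{\prime }-g_{l\ast }^{\prime }$, the two minus signs coming from the sign flips in the primed representations. This is precisely the asserted identity.

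The only real obstacle is the bookkeeping: one must confirm that under each interchange the round bracket exhibits the correct symmetry (invariance under the $\left( \boldsymbol{\xi },I_{i}\right) \leftrightarrow \left( \boldsymbol{\xi }_{\ast },I_{j}\right) $ swap) and antisymmetry (sign reversal under the primed$\leftrightarrow$unprimed swap), so that the four weights combine with exactly the signs $+g_{i}+g_{j\ast }-g_{k}^{\prime }-g_{l\ast }^{\prime }$. No new analytic estimates are required, since integrability of all four integrals is already guaranteed by the hypothesis together with the invariance of $dA_{ij}^{kl}$ from Lemma $\ref{L0}$.
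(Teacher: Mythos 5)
Your proposal is correct and is essentially the paper's own argument: the paper establishes the same four equivalent representations of $\left( Q(f,f),g\right)$ via the measure invariances of Lemma $\ref{L0}$ (the display preceding the proposition) and obtains the identity by averaging them, exactly as you do. Your explicit bookkeeping — invariance of the bracket under the $\left( \boldsymbol{\xi },I_{i}\right) \leftrightarrow \left( \boldsymbol{\xi }_{\ast },I_{j}\right)$ swap, sign reversal under the primed/unprimed swap, and the composition giving the weight $-g_{l\ast }^{\prime }$ — is precisely what the paper leaves implicit, so nothing is missing.
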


\begin{definition}
A function $g=(g_{1},...,g_{r})\ $is a collision invariant if 
\begin{equation*}
\left( g_{i}+g_{j\ast }-g_{k}^{\prime }-g_{l\ast }^{\prime }\right) W(%
\boldsymbol{\xi },\boldsymbol{\xi }_{\ast },I_{i},I_{j}\left\vert 
\boldsymbol{\xi }^{\prime },\boldsymbol{\xi }_{\ast }^{\prime
},I_{k},I_{l}\right. )=0\text{ a.e.}
\end{equation*}%
for all $\left\{ i,j,k,l\right\} \subseteq \left\{ 1,\ldots ,r\right\} $.
\end{definition}

It is clear that $\mathbf{1},$ $\xi _{x}\mathbf{1},$ $\xi _{y}\mathbf{1},$ $%
\xi _{z}\mathbf{1}\ $and $m\left\vert \boldsymbol{\xi }\right\vert ^{2}%
\mathbf{1}+2\mathbb{I}$, where $\mathbf{1}=(1,...,1)\in \mathbb{R}^{r}$ and $%
\mathbb{I}=(I_{1},...,I_{r})$, are collision invariants - corresponding to
conservation of mass, momentum, and total energy.

In fact, we have the following proposition, cf. \cite{GS-99, Cercignani-88}.

\begin{proposition}
\label{P2}The vector space of collision invariants is generated by 
\begin{equation*}
\left\{ \mathbf{1},\xi _{x}\mathbf{1},\xi _{y}\mathbf{1},\xi _{z}\mathbf{1}%
,m\left\vert \boldsymbol{\xi }\right\vert ^{2}\mathbf{1}+2\mathbb{I}\right\}
,
\end{equation*}%
where $\mathbf{1}=(1,...,1)\in \mathbb{R}^{r}$ and $\mathbb{I}%
=(I_{1},...,I_{r})$.
\end{proposition}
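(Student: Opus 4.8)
The plan is to establish the nontrivial inclusion: since the five listed functions are already verified to be collision invariants, it remains to show that \emph{every} collision invariant lies in their span. Throughout I work on the support of $W$, where by the definition of a collision invariant together with $(\ref{tp})$ the defining condition for $g=(g_1,\dots,g_r)$ reads
\begin{equation*}
g_i(\boldsymbol{\xi}) + g_{j\ast} = g_k^{\prime} + g_{l\ast}^{\prime}
\end{equation*}
for all velocities obeying $\boldsymbol{\xi}+\boldsymbol{\xi}_{\ast}=\boldsymbol{\xi}^{\prime}+\boldsymbol{\xi}_{\ast}^{\prime}$ and $\tfrac{m}{2}\left(\left\vert\boldsymbol{\xi}\right\vert^2+\left\vert\boldsymbol{\xi}_{\ast}\right\vert^2-\left\vert\boldsymbol{\xi}^{\prime}\right\vert^2-\left\vert\boldsymbol{\xi}_{\ast}^{\prime}\right\vert^2\right)=\Delta I_{ij}^{kl}$, whenever $\sigma_{ij}^{kl}>0$. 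The strategy is to extract information from three families of admissible collisions: elastic collisions within a single level, elastic collisions across levels, and genuinely inelastic collisions.

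First I would specialize to purely elastic collisions within one energy level, taking $k=i$, $l=j$, $i=j$, so that $\Delta I_{ii}^{ii}=0$ and the constraint becomes the classical elastic one. Since $\sigma_{ii}^{ii}>0$ a.e., each $g_i$ then satisfies the single-species monatomic collision-invariant equation, and the classical characterization, in its measurable (rather than continuous) form, cf. \cite{Cercignani-88}, yields
\begin{equation*}
g_i(\boldsymbol{\xi}) = a_i + \mathbf{b}_i\cdot\boldsymbol{\xi} + c_i\left\vert\boldsymbol{\xi}\right\vert^2, \quad i\in\left\{1,\dots,r\right\},
\end{equation*}
for some $a_i,c_i\in\mathbb{R}$ and $\mathbf{b}_i\in\mathbb{R}^3$. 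I expect the appeal to the \emph{measurable} version of this theorem to be the only genuinely delicate point, since collision invariants are prescribed only almost everywhere and $g$ is a priori merely measurable.

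Next I would couple distinct levels. Keeping the collisions elastic ($k=i$, $l=j$) but now with $i\neq j$ (so $r\geq 2$), I substitute the forms above into the invariance identity and use the centre-of-mass parametrization $\boldsymbol{\xi}=\mathbf{G}+\mathbf{g}/2$, $\boldsymbol{\xi}_{\ast}=\mathbf{G}-\mathbf{g}/2$ with $\left\vert\mathbf{g}^{\prime}\right\vert=\left\vert\mathbf{g}\right\vert$, which reduces the identity to
\begin{equation*}
\tfrac12\left(\mathbf{b}_i-\mathbf{b}_j\right)\cdot\left(\mathbf{g}-\mathbf{g}^{\prime}\right) + \left(c_i-c_j\right)\mathbf{G}\cdot\left(\mathbf{g}-\mathbf{g}^{\prime}\right) = 0,
\end{equation*}
valid for arbitrary $\mathbf{G}$ and all $\mathbf{g},\mathbf{g}^{\prime}$ of equal length. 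Letting $\mathbf{G}$ range freely forces $c_i=c_j$, and then varying $\mathbf{g},\mathbf{g}^{\prime}$ forces $\mathbf{b}_i=\mathbf{b}_j$; hence $\mathbf{b}_i\equiv\mathbf{b}$ and $c_i\equiv c$ are independent of the level.

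Finally I would use genuinely inelastic collisions to pin down the $a_i$. For arbitrary indices $i,j,k,l$ the indicator $\mathbf{1}_{m\left\vert\mathbf{g}\right\vert^2>4\Delta I_{ij}^{kl}}$ appearing in $(\ref{tp})$ is nonvanishing once $\left\vert\mathbf{g}\right\vert$ is large, so every transition $(i,j)\to(k,l)$ is energetically admissible. Substituting the reduced forms, cancelling the $\mathbf{b}$-terms by momentum conservation, and converting the quadratic terms through the energy constraint collapses the invariance identity to
\begin{equation*}
a_i + a_j - \tfrac{2c}{m}\left(I_i+I_j\right) = a_k + a_l - \tfrac{2c}{m}\left(I_k+I_l\right).
\end{equation*}
Setting $d_i := a_i - \tfrac{2c}{m}I_i$, this states $d_i+d_j=d_k+d_l$ for all quadruples; choosing $i=j$ and $k=l$ shows all $d_i$ coincide, say $d_i\equiv d$. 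Then $a_i=d+\tfrac{2c}{m}I_i$, so $g_i(\boldsymbol{\xi}) = d + \mathbf{b}\cdot\boldsymbol{\xi} + c\left\vert\boldsymbol{\xi}\right\vert^2 + \tfrac{2c}{m}I_i$, that is
\begin{equation*}
g = d\,\mathbf{1} + \mathbf{b}\cdot\boldsymbol{\xi}\,\mathbf{1} + \tfrac{c}{m}\left(m\left\vert\boldsymbol{\xi}\right\vert^2\mathbf{1}+2\mathbb{I}\right),
\end{equation*}
which is exactly a linear combination of the five generators. This proves the claimed span, with the measurable-regularity input in the first step being the principal obstacle.
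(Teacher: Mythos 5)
Your proof is sound, but note that there is nothing internal to compare it against: the paper states Proposition \ref{P2} without proof, referring to \cite{GS-99, Cercignani-88}. Your three-stage reduction is essentially the standard argument behind those references, here made explicit: (i) intra-level elastic collisions $(i,i)\rightarrow(i,i)$ reduce each component to the classical monatomic functional equation, whence $g_{i}=a_{i}+\mathbf{b}_{i}\cdot \boldsymbol{\xi }+c_{i}\left\vert \boldsymbol{\xi }\right\vert ^{2}$ by the measurable version of the monatomic invariant theorem; (ii) cross-level elastic collisions $(i,j)\rightarrow(i,j)$ equalize the $\mathbf{b}_{i}$ and $c_{i}$ — your center-of-mass reduction to $\tfrac{1}{2}\left( \mathbf{b}_{i}-\mathbf{b}_{j}\right) \cdot \left( \mathbf{g}-\mathbf{g}^{\prime }\right) +\left( c_{i}-c_{j}\right) \mathbf{G}\cdot \left( \mathbf{g}-\mathbf{g}^{\prime }\right) =0$ checks out, and since the differences $\mathbf{g}-\mathbf{g}^{\prime }$ with $\left\vert \mathbf{g}\right\vert =\left\vert \mathbf{g}^{\prime }\right\vert $ span $\mathbb{R}^{3}$, the conclusion follows; (iii) inelastic transitions, energetically admissible for $\left\vert \mathbf{g}\right\vert $ large, yield $d_{i}+d_{j}=d_{k}+d_{l}$ with $d_{i}=a_{i}-\tfrac{2c}{m}I_{i}$, and specializing $j=i$, $l=k$ pins $a_{i}=d+\tfrac{2c}{m}I_{i}$, giving exactly the claimed span. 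Two inputs you rely on deserve explicit mention: first, the standing assumption in $\left( \ref{tp}\right) $ that \emph{all} $\sigma _{ij}^{kl}>0$ a.e. — this is what guarantees that each of your three families of collisions is active, and without it the space of invariants could be strictly larger; second, the promotion of a.e. identities on the collision manifold to pointwise identities, which is routine here because after step (i) both sides are polynomial in $\mathbf{G}$, $\mathbf{g}$, $\mathbf{g}^{\prime }$. You correctly identify the only genuinely delicate analytic point, the measurable (rather than continuous) monatomic characterization, and like the paper you take it from \cite{Cercignani-88} rather than reproving it; modulo that citation, your argument is complete and self-contained.
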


Define%
\begin{equation*}
\mathcal{W}\left[ f\right] :=\left( Q(f,f),\log \left( \varphi ^{-1}f\right)
\right) ,
\end{equation*}%
where $\varphi =\mathrm{diag}\left( \varphi _{1},...,\varphi _{r}\right) $.
It follows by Proposition $\ref{P1}$ that%
\begin{equation*}
\mathcal{W}\left[ f\right] =-\frac{1}{4}\sum\limits_{i,j,k,l=1}^{r}\int%
\limits_{\left( \mathbb{R}^{3}\right) ^{4}}\frac{f_{i}f_{j\ast }}{\varphi
_{i}\varphi _{j}}\left( \frac{\varphi _{i}\varphi _{j}f_{k}^{\prime
}f_{l\ast }^{\prime }}{f_{i}f_{j\ast }\varphi _{k}\varphi _{l}}-1\right)
\log \left( \frac{\varphi _{i}\varphi _{j}f_{k}^{\prime }f_{l\ast }^{\prime }%
}{f_{i}f_{j\ast }\varphi _{k}\varphi _{l}}\right) \,dA_{ij}^{kl}\text{.}
\end{equation*}%
Since $\left( x-1\right) \mathrm{log}\left( x\right) \geq 0$ for $x>0$, with
equality if and only if $x=1$,%
\begin{equation*}
\mathcal{W}\left[ f\right] \leq 0\text{,}
\end{equation*}%
with equality if and only if for all $\left\{ i,j,k,l\right\} \subseteq
\left\{ 1,\ldots ,r\right\} $ 
\begin{equation}
\left( \frac{f_{i}f_{j\ast }}{\varphi _{i}\varphi _{j}}-\frac{f_{k}^{\prime
}f_{l\ast }^{\prime }}{\varphi _{k}\varphi _{l}}\right) W(\boldsymbol{\xi },%
\boldsymbol{\xi }_{\ast },I_{i},I_{j}\left\vert \boldsymbol{\xi }^{\prime },%
\boldsymbol{\xi }_{\ast }^{\prime },I_{k},I_{l}\right. )=0\text{ a.e.,}
\label{m1}
\end{equation}%
or, equivalently, if and only if%
\begin{equation*}
Q(f,f)\equiv 0\text{.}
\end{equation*}

For any equilibrium, or Maxwellian, distribution $M=(M_{1},...,M_{r})$, it
follows by equation $\left( \ref{m1}\right) $, since $Q(M,M)\equiv 0$, that%
\begin{eqnarray*}
&&\left( \log \frac{M_{i}}{\varphi _{i}}+\log \frac{M_{j\ast }}{\varphi _{j}}%
-\log \frac{M_{k}^{\prime }}{\varphi _{k}}-\log \frac{M_{l\ast }^{\prime }}{%
\varphi _{l}}\right) \\
\times &&W(\boldsymbol{\xi },\boldsymbol{\xi }_{\ast },I_{i},I_{j}\left\vert 
\boldsymbol{\xi }^{\prime },\boldsymbol{\xi }_{\ast }^{\prime
},I_{k},I_{l}\right. )=0\text{ a.e. .}
\end{eqnarray*}%
Hence, $\log \left( \varphi ^{-1}M\right) =\left( \log \dfrac{M_{1}}{\varphi
_{1}},...,\log \dfrac{M_{r}}{\varphi _{r}}\right) $ is a collision
invariant, and the components of the Maxwellian distributions $%
M=(M_{1},...,M_{r})$ are of the form 
\begin{equation*}
M_{i}=\frac{n\varphi _{i}m^{3/2}}{\left( 2\pi T\right) ^{3/2}q}%
e^{-m\left\vert \boldsymbol{\xi }-\mathbf{u}\right\vert ^{2}/2T}e^{-I_{i}/T}%
\text{,}
\end{equation*}%
where $n=\left( M,\mathbf{1}\right) $, $\mathbf{u}=\dfrac{1}{n}\left( M,%
\boldsymbol{\xi }\mathbf{1}\right) $, and $T=\dfrac{m}{3n}\left(
M,\left\vert \boldsymbol{\xi }-\mathbf{u}\right\vert ^{2}\mathbf{1}\right) $%
, denoting $\mathbf{1}=(1,...,1)\in \mathbb{R}^{r}$, while $%
q=\sum\limits_{i=1}^{r}\varphi _{i}e^{-I_{i}/T}$.

Note that by equation $\left( \ref{m1}\right) $ any Maxwellian distribution $%
M=(M_{1},...,M_{r})$ satisfies the relations 
\begin{equation}
\left( \frac{M_{k}^{\prime }M_{l\ast }^{\prime }}{\varphi _{k}\varphi _{l}}-%
\frac{M_{i}M_{j\ast }}{\varphi _{i}\varphi _{j}}\right) W(\boldsymbol{\xi },%
\boldsymbol{\xi }_{\ast },I_{i},I_{j}\left\vert \boldsymbol{\xi }^{\prime },%
\boldsymbol{\xi }_{\ast }^{\prime },I_{k},I_{l}\right. )=0\text{ a.e. }.
\label{M1}
\end{equation}

\begin{remark}
Introducing the $\mathcal{H}$-functional%
\begin{equation*}
\mathcal{H}\left[ f\right] =\left( f,\log f\right) \text{,}
\end{equation*}%
an $\mathcal{H}$-theorem can be obtained.
\end{remark}

\subsubsection{Linearized collision operator\label{S2.1.3}}

Considering a deviation of a Maxwellian distribution $M=(M_{1},...,M_{r})$,
where $M_{i}=\dfrac{\varphi _{i}m^{3/2}}{\left( 2\pi \right) ^{3/2}}%
e^{-m\left\vert \boldsymbol{\xi }\right\vert ^{2}/2}e^{-I_{i}}$, of the form%
\begin{equation}
f=M+M^{1/2}h  \label{s1}
\end{equation}%
results, by insertion in the Boltzmann equation $\left( \ref{BE1}\right) $,
in the system%
\begin{equation}
\frac{\partial h}{\partial t}+\left( \boldsymbol{\xi }\cdot \nabla _{\mathbf{%
x}}\right) h+\mathcal{L}h=\Gamma \left( h,h\right) \text{,}  \label{LBE}
\end{equation}%
where the components of the linearized collision operator $\mathcal{L}%
=\left( \mathcal{L}_{1},...,\mathcal{L}_{r}\right) $ are given by \ 
\begin{eqnarray}
\mathcal{L}_{i}h &=&-M_{i}^{-1/2}\left(
Q_{i}(M,M^{1/2}h)+Q_{i}(M^{1/2}h,M)\right)  \notag \\
&=&M_{i}^{-1/2}\sum\limits_{j,k,l=1}^{r}\int_{\left( \mathbb{R}^{3}\right)
^{3}}\left( \frac{M_{i}M_{j\ast }M_{k}^{\prime }M_{l\ast }^{\prime }}{%
\varphi _{i}\varphi _{j}\varphi _{k}\varphi _{l}}\right) ^{1/2}W(\boldsymbol{%
\xi },\boldsymbol{\xi }_{\ast },I_{i},I_{j}\left\vert \boldsymbol{\xi }%
^{\prime },\boldsymbol{\xi }_{\ast }^{\prime },I_{k},I_{l}\right. )  \notag
\\
&&\times \left( \frac{h_{i}}{M_{i}^{1/2}}+\frac{h_{j\ast }}{M_{j\ast }^{1/2}}%
-\frac{h_{k}^{\prime }}{\left( M_{k}^{\prime }\right) ^{1/2}}-\frac{h_{l\ast
}^{\prime }}{\left( M_{l\ast }^{\prime }\right) ^{1/2}}\right) \,d%
\boldsymbol{\xi }_{\ast }d\boldsymbol{\xi }^{\prime }d\boldsymbol{\xi }%
_{\ast }^{\prime }  \notag \\
&=&\upsilon _{i}h_{i}-K_{i}\left( h\right) \text{,}  \label{dec2}
\end{eqnarray}%
with%
\begin{eqnarray}
\upsilon _{i} &=&\sum\limits_{j,k,l=1}^{r}\int_{\left( \mathbb{R}^{3}\right)
^{3}}\frac{M_{j\ast }}{\varphi _{i}\varphi _{j}}W(\boldsymbol{\xi },%
\boldsymbol{\xi }_{\ast },I_{i},I_{j}\left\vert \boldsymbol{\xi }^{\prime },%
\boldsymbol{\xi }_{\ast }^{\prime },I_{k},I_{l}\right. )\,d\boldsymbol{\xi }%
_{\ast }d\boldsymbol{\xi }^{\prime }d\boldsymbol{\xi }_{\ast }^{\prime }%
\text{, and}  \notag \\
K_{i}\left( h\right)
&=&M_{i}^{-1/2}\!\!\sum\limits_{j,k,l=1}^{r}\int_{\left( \mathbb{R}%
^{3}\right) ^{3}}\!\left( \frac{M_{i}M_{j\ast }M_{k}^{\prime }M_{l\ast
}^{\prime }}{\varphi _{i}\varphi _{j}\varphi _{k}\varphi _{l}}\right)
^{1/2}\!\!W(\boldsymbol{\xi },\boldsymbol{\xi }_{\ast
},I_{i},I_{j}\left\vert \boldsymbol{\xi }^{\prime },\boldsymbol{\xi }_{\ast
}^{\prime },I_{k},I_{l}\right. )  \notag \\
&&\times \left( \frac{h_{k}^{\prime }}{\left( M_{k}^{\prime }\right) ^{1/2}}+%
\frac{h_{l\ast }^{\prime }}{\left( M_{l\ast }^{\prime }\right) ^{1/2}}-\frac{%
h_{j\ast }}{M_{j\ast }^{1/2}}\right) \,d\boldsymbol{\xi }_{\ast }d%
\boldsymbol{\xi }^{\prime }d\boldsymbol{\xi }_{\ast }^{\prime }\text{,}
\label{dec1}
\end{eqnarray}%
while the components of the quadratic term $\Gamma =\left( \Gamma
_{1},...,\Gamma _{r}\right) $ are given by%
\begin{equation}
\Gamma _{i}\left( h,h\right) =M_{i}^{-1/2}Q_{i}(M^{1/2}h,M^{1/2}h)\text{.}
\label{nl1}
\end{equation}%
The multiplication operator $\Lambda $ defined by 
\begin{equation*}
\Lambda (f)=\nu f\text{, where }\nu =\mathrm{diag}\left( \nu _{1},...,\nu
_{r}\right) \text{,}
\end{equation*}%
is a closed, densely defined, self-adjoint operator on $\left( L^{2}\left( d%
\boldsymbol{\xi }\right) \right) ^{r}$. It is Fredholm, as well, if and only
if $\Lambda $ is coercive.

The following lemma follows immediately by Lemma $\ref{L0}$.

\begin{lemma}
\label{L1}For any $\left\{ i,j,k,l\right\} \subseteq \left\{ 1,\ldots
,r\right\} $ the measure 
\begin{equation*}
d\widetilde{A}_{ij}^{kl}=\left( \frac{M_{i}M_{j\ast }M_{k}^{\prime }M_{l\ast
}^{\prime }}{\varphi _{i}\varphi _{j}\varphi _{k}\varphi _{l}}\right)
^{1/2}dA_{ij}^{kl}
\end{equation*}%
is invariant under the interchanges $\left( \ref{tr}\right) $ of variables
respectively.
\end{lemma}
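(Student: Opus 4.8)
The plan is to reduce the statement entirely to Lemma $\ref{L0}$ by peeling off the scalar prefactor. I would write $d\widetilde{A}_{ij}^{kl}=P_{ij}^{kl}\,dA_{ij}^{kl}$, where
\[
P_{ij}^{kl}=\left( \frac{M_{i}M_{j\ast }M_{k}^{\prime }M_{l\ast }^{\prime }}{\varphi _{i}\varphi _{j}\varphi _{k}\varphi _{l}}\right) ^{1/2}.
\]
Since Lemma $\ref{L0}$ already asserts that $dA_{ij}^{kl}$ is invariant under each of the three interchanges $\left( \ref{tr}\right) $, and since the product of an invariant measure with a scalar quantity that is itself invariant under the same substitutions is again invariant, it suffices to verify that $P_{ij}^{kl}$ is unchanged under each interchange in $\left( \ref{tr}\right) $.

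The key observation is that, up to the square root, $P_{ij}^{kl}$ is the product of the four Maxwellian values $M_{i}$, $M_{j\ast }$, $M_{k}^{\prime }$, $M_{l\ast }^{\prime }$ divided by the product of the four constants $\varphi _{i}$, $\varphi _{j}$, $\varphi _{k}$, $\varphi _{l}$, and is therefore totally symmetric in these four (velocity, internal-energy) slots. I would check this slot by slot. Under $\left( \boldsymbol{\xi },\boldsymbol{\xi }_{\ast },I_{i},I_{j}\right) \leftrightarrow \left( \boldsymbol{\xi }^{\prime },\boldsymbol{\xi }_{\ast }^{\prime },I_{k},I_{l}\right) $ the factor $M_{i}$ is exchanged with $M_{k}^{\prime }$ and $M_{j\ast }$ with $M_{l\ast }^{\prime }$ (and likewise $\varphi _{i}\leftrightarrow \varphi _{k}$, $\varphi _{j}\leftrightarrow \varphi _{l}$), so both numerator and denominator merely reorder; under $\left( \boldsymbol{\xi },I_{i}\right) \leftrightarrow \left( \boldsymbol{\xi }_{\ast },I_{j}\right) $ only $M_{i}\leftrightarrow M_{j\ast }$ and $\varphi _{i}\leftrightarrow \varphi _{j}$ are swapped; under $\left( \boldsymbol{\xi }^{\prime },I_{k}\right) \leftrightarrow \left( \boldsymbol{\xi }_{\ast }^{\prime },I_{l}\right) $ only $M_{k}^{\prime }\leftrightarrow M_{l\ast }^{\prime }$ and $\varphi _{k}\leftrightarrow \varphi _{l}$. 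In each case $P_{ij}^{kl}$ is left invariant.

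Combining the invariance of the scalar prefactor with Lemma $\ref{L0}$ then yields the invariance of $d\widetilde{A}_{ij}^{kl}$, which is the claim. I do not expect any genuine obstacle here: the statement is a bookkeeping symmetry of a symmetric product, and in particular it holds pointwise, so there is no need to invoke the support constraint carried by $W$ or the Maxwellian relation $\left( \ref{M1}\right) $. The only point requiring care is matching each interchanged (velocity, internal-energy) pair with the correct Maxwellian factor, since $M_{i}=M_{i}\left( \boldsymbol{\xi }\right) $ while $M_{j\ast }=M_{j}\left( \boldsymbol{\xi }_{\ast }\right) $, $M_{k}^{\prime }=M_{k}\left( \boldsymbol{\xi }^{\prime }\right) $ and $M_{l\ast }^{\prime }=M_{l}\left( \boldsymbol{\xi }_{\ast }^{\prime }\right) $ each carry both a discrete label and a velocity argument that must be swapped together.
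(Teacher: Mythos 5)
Your proposal is correct and takes the same route as the paper: the paper's entire proof is the remark that the lemma ``follows immediately by Lemma $\ref{L0}$'', and your slot-by-slot check that the scalar prefactor $\left( M_{i}M_{j\ast }M_{k}^{\prime }M_{l\ast }^{\prime }/\left( \varphi _{i}\varphi _{j}\varphi _{k}\varphi _{l}\right) \right) ^{1/2}$ is symmetric under each interchange in $\left( \ref{tr}\right) $ is precisely the bookkeeping the paper leaves implicit. Your observation that no use of the support of $W$ or of relation $\left( \ref{M1}\right) $ is needed is also accurate.
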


The weak form of the linearized collision operator $\mathcal{L}$ reads%
\begin{eqnarray*}
&&\left( \mathcal{L}h,g\right) \\
&=&\sum\limits_{i,j,k,l=1}^{r}\int_{\left( \mathbb{R}^{3}\right) ^{4}}\left( 
\frac{h_{i}}{M_{i}^{1/2}}+\frac{h_{j\ast }}{M_{j\ast }^{1/2}}-\frac{%
h_{k}^{\prime }}{\left( M_{k}^{\prime }\right) ^{1/2}}-\frac{h_{l\ast
}^{\prime }}{\left( M_{l\ast }^{\prime }\right) ^{1/2}}\right) \frac{g_{i}}{%
M_{i}^{1/2}}\,d\widetilde{A}_{ij}^{kl} \\
&=&\sum\limits_{i,j,k,l=1}^{r}\int_{\left( \mathbb{R}^{3}\right) ^{4}}\left( 
\frac{h_{i}}{M_{i}^{1/2}}+\frac{h_{j\ast }}{M_{j\ast }^{1/2}}-\frac{%
h_{k}^{\prime }}{\left( M_{k}^{\prime }\right) ^{1/2}}-\frac{h_{l\ast
}^{\prime }}{\left( M_{l\ast }^{\prime }\right) ^{1/2}}\right) \frac{%
g_{j\ast }}{M_{j\ast }^{1/2}}\,d\widetilde{A}_{ij}^{kl} \\
&=&-\sum\limits_{i,j,k,l=1}^{r}\int_{\left( \mathbb{R}^{3}\right)
^{4}}\left( \frac{h_{i}}{M_{i}^{1/2}}+\frac{h_{j\ast }}{M_{j\ast }^{1/2}}-%
\frac{h_{k}^{\prime }}{\left( M_{k}^{\prime }\right) ^{1/2}}-\frac{h_{l\ast
}^{\prime }}{\left( M_{l\ast }^{\prime }\right) ^{1/2}}\right) \frac{%
g_{k}^{\prime }}{\left( M_{k}^{\prime }\right) ^{1/2}}\,d\widetilde{A}%
_{ij}^{kl} \\
&=&-\sum\limits_{i,j,k,l=1}^{r}\int_{\left( \mathbb{R}^{3}\right)
^{4}}\left( \frac{h_{i}}{M_{i}^{1/2}}+\frac{h_{j\ast }}{M_{j\ast }^{1/2}}-%
\frac{h_{k}^{\prime }}{\left( M_{k}^{\prime }\right) ^{1/2}}-\frac{h_{l\ast
}^{\prime }}{\left( M_{l\ast }^{\prime }\right) ^{1/2}}\right) \frac{%
g_{l\ast }^{\prime }}{\left( M_{l\ast }^{\prime }\right) ^{1/2}}\,d%
\widetilde{A}_{ij}^{kl},
\end{eqnarray*}%
for any function $g=(g_{1},...,g_{r})$, such that the first integrals are
defined for all $\left\{ i,j,k,l\right\} \subseteq \left\{ 1,\ldots
,r\right\} $, while the following equalities are obtained by applying Lemma $%
\ref{L1}$.

We have the following lemma.

\begin{lemma}
\label{L2}Let $g=(g_{1},...,g_{r})$ be such that%
\begin{equation*}
\int_{\left( \mathbb{R}^{3}\right) ^{4}}\left( \frac{h_{i}}{M_{i}^{1/2}}+%
\frac{h_{j\ast }}{M_{j\ast }^{1/2}}-\frac{h_{k}^{\prime }}{\left(
M_{k}^{\prime }\right) ^{1/2}}-\frac{h_{l\ast }^{\prime }}{\left( M_{l\ast
}^{\prime }\right) ^{1/2}}\right) \frac{g_{i}}{M_{i}^{1/2}}\,d\widetilde{A}%
_{ij}^{kl}
\end{equation*}%
is defined for all $\left\{ i,j,k,l\right\} \subseteq \left\{ 1,\ldots
,r\right\} $. Then%
\begin{eqnarray*}
\left( \mathcal{L}h,g\right) &=&\frac{1}{4}\sum\limits_{i,j,k,l=1}^{r}\int_{%
\left( \mathbb{R}^{3}\right) ^{4}}\left( \frac{h_{i}}{M_{i}^{1/2}}+\frac{%
h_{j\ast }}{M_{j\ast }^{1/2}}-\frac{h_{k}^{\prime }}{\left( M_{k}^{\prime
}\right) ^{1/2}}-\frac{h_{l\ast }^{\prime }}{\left( M_{l\ast }^{\prime
}\right) ^{1/2}}\right) \\
&&\times \left( \frac{g_{i}}{M_{i}^{1/2}}+\frac{g_{j\ast }}{M_{j\ast }^{1/2}}%
-\frac{g_{k}^{\prime }}{\left( M_{k}^{\prime }\right) ^{1/2}}-\frac{g_{l\ast
}^{\prime }}{\left( M_{l\ast }^{\prime }\right) ^{1/2}}\right) d\widetilde{A}%
_{ij}^{kl}.
\end{eqnarray*}
\end{lemma}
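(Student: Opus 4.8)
The plan is to read off the symmetrized identity directly from the four representations of $\left( \mathcal{L}h,g\right) $ displayed immediately before the lemma, which were themselves obtained from the invariances of the measure $d\widetilde{A}_{ij}^{kl}$ recorded in Lemma $\ref{L1}$. Denote by
\begin{equation*}
H_{ij}^{kl}=\frac{h_{i}}{M_{i}^{1/2}}+\frac{h_{j\ast }}{M_{j\ast }^{1/2}}-\frac{h_{k}^{\prime }}{\left( M_{k}^{\prime }\right) ^{1/2}}-\frac{h_{l\ast }^{\prime }}{\left( M_{l\ast }^{\prime }\right) ^{1/2}}
\end{equation*}
the common factor appearing in each integrand. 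The four displayed equalities assert that $\left( \mathcal{L}h,g\right) $ equals the sum over $\{i,j,k,l\}$ of $\int H_{ij}^{kl}\,g_{i}M_{i}^{-1/2}\,d\widetilde{A}_{ij}^{kl}$, of $\int H_{ij}^{kl}\,g_{j\ast }M_{j\ast }^{-1/2}\,d\widetilde{A}_{ij}^{kl}$, of $-\int H_{ij}^{kl}\,g_{k}^{\prime }(M_{k}^{\prime })^{-1/2}\,d\widetilde{A}_{ij}^{kl}$, and of $-\int H_{ij}^{kl}\,g_{l\ast }^{\prime }(M_{l\ast }^{\prime })^{-1/2}\,d\widetilde{A}_{ij}^{kl}$, respectively.

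First I would add these four identities. Each left-hand side equals $\left( \mathcal{L}h,g\right) $, so the left-hand side of the sum is $4\left( \mathcal{L}h,g\right) $; on the right the four test-function factors combine, within a single integral against $H_{ij}^{kl}\,d\widetilde{A}_{ij}^{kl}$, into precisely the symmetrized expression $g_{i}M_{i}^{-1/2}+g_{j\ast }M_{j\ast }^{-1/2}-g_{k}^{\prime }(M_{k}^{\prime })^{-1/2}-g_{l\ast }^{\prime }(M_{l\ast }^{\prime })^{-1/2}$. Dividing by $4$ then yields the asserted formula. This is the exact analogue, for the linearized operator, of the passage from the weak forms of $Q(f,f)$ to the symmetrized form in Proposition $\ref{P1}$.

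The only point requiring care is that the addition of the four integrals be legitimate, i.e. that each of them be individually well defined so that no cancellation of the form $\infty -\infty $ occurs. This is exactly what the hypothesis guarantees: it posits that the first integral is defined for every $\{i,j,k,l\}$, and the invariances of $d\widetilde{A}_{ij}^{kl}$ under the interchanges $\left( \ref{tr}\right) $ (Lemma $\ref{L1}$) transport this to the remaining three integrals, which is precisely how the preceding four equalities were derived. Consequently there is no real obstacle here; the content of the lemma is purely the bookkeeping of summing the four equivalent weak forms and normalizing, and the argument is identical in spirit to that for Proposition $\ref{P1}$.
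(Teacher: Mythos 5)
Your proposal is correct and coincides with the paper's own (implicit) argument: the paper derives the lemma precisely by averaging the four equivalent weak forms of $\left( \mathcal{L}h,g\right)$ displayed just before it, which are obtained from the invariances of $d\widetilde{A}_{ij}^{kl}$ in Lemma \ref{L1}, exactly as in the passage to Proposition \ref{P1}. Your remark that the hypothesis guarantees each of the four integrals is individually defined, so the addition is legitimate, is the same (and the only) point of care in the paper's treatment.
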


\begin{proposition}
The linearized collision operator is symmetric and nonnegative,%
\begin{equation*}
\left( \mathcal{L}h,g\right) =\left( h,\mathcal{L}g\right) \text{ and }%
\left( \mathcal{L}h,h\right) \geq 0\text{,}
\end{equation*}%
and\ the kernel of $\mathcal{L}$, $\ker \mathcal{L}$, is generated by%
\begin{equation*}
\left\{ M^{1/2},\xi _{x}M^{1/2},\xi _{y}M^{1/2},\xi _{z}M^{1/2},m\left\vert 
\boldsymbol{\xi }\right\vert ^{2}M^{1/2}+2\mathcal{M}^{1/2}I\right\} ,
\end{equation*}%
where $I=\left( I_{1},...,I_{r}\right) $ and $\mathcal{M}=\mathrm{diag}%
\left( M_{1},...,M_{r}\right) $.
\end{proposition}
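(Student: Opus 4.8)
The plan is to obtain all three assertions from the single symmetric representation of the bilinear form in Lemma $\ref{L2}$, invoking Proposition $\ref{P2}$ only at the very end to identify the kernel explicitly. For symmetry I would simply read off Lemma $\ref{L2}$: its right-hand side is the integral, against the measure $d\widetilde{A}_{ij}^{kl}$ of Lemma $\ref{L1}$, of the product of the two brackets
\[
\frac{h_{i}}{M_{i}^{1/2}}+\frac{h_{j\ast}}{M_{j\ast}^{1/2}}-\frac{h_{k}^{\prime}}{\left(M_{k}^{\prime}\right)^{1/2}}-\frac{h_{l\ast}^{\prime}}{\left(M_{l\ast}^{\prime}\right)^{1/2}}\quad\text{and}\quad\frac{g_{i}}{M_{i}^{1/2}}+\frac{g_{j\ast}}{M_{j\ast}^{1/2}}-\frac{g_{k}^{\prime}}{\left(M_{k}^{\prime}\right)^{1/2}}-\frac{g_{l\ast}^{\prime}}{\left(M_{l\ast}^{\prime}\right)^{1/2}}.
\]
Interchanging $h$ and $g$ merely swaps the two factors, so $\left(\mathcal{L}h,g\right)=\left(h,\mathcal{L}g\right)$ on the class of $h,g$ for which the form is defined.

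For nonnegativity I would set $g=h$ in Lemma $\ref{L2}$, so that the integrand becomes the square of the single bracket
\[
B_{ij}^{kl}:=\frac{h_{i}}{M_{i}^{1/2}}+\frac{h_{j\ast}}{M_{j\ast}^{1/2}}-\frac{h_{k}^{\prime}}{\left(M_{k}^{\prime}\right)^{1/2}}-\frac{h_{l\ast}^{\prime}}{\left(M_{l\ast}^{\prime}\right)^{1/2}}.
\]
Since each $d\widetilde{A}_{ij}^{kl}$ is a nonnegative measure — it equals $dA_{ij}^{kl}=W\,d\boldsymbol{\xi}\,d\boldsymbol{\xi}_{\ast}\,d\boldsymbol{\xi}^{\prime}\,d\boldsymbol{\xi}_{\ast}^{\prime}\geq0$ times the strictly positive weight $\left(M_{i}M_{j\ast}M_{k}^{\prime}M_{l\ast}^{\prime}/\varphi_{i}\varphi_{j}\varphi_{k}\varphi_{l}\right)^{1/2}$ — the expression $\left(\mathcal{L}h,h\right)$ is a sum of integrals of squares, hence $\left(\mathcal{L}h,h\right)\geq0$.

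For the kernel I would set $\psi:=M^{-1/2}h$ (componentwise $\psi_{i}=h_{i}M_{i}^{-1/2}$), so that $B_{ij}^{kl}=\psi_{i}+\psi_{j\ast}-\psi_{k}^{\prime}-\psi_{l\ast}^{\prime}$, and prove the two inclusions separately. If $h\in\ker\mathcal{L}$ then $\left(\mathcal{L}h,h\right)=0$, and by the square representation above each $B_{ij}^{kl}$ vanishes $d\widetilde{A}_{ij}^{kl}$-a.e.; because the weight is strictly positive, this is equivalent to $\left(\psi_{i}+\psi_{j\ast}-\psi_{k}^{\prime}-\psi_{l\ast}^{\prime}\right)W=0$ a.e., i.e.\ $\psi$ is a collision invariant. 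Conversely, if $\psi=M^{-1/2}h$ is a collision invariant, this same product vanishes a.e., and the integrand in the definition $\left(\ref{dec2}\right)$ of $\mathcal{L}_{i}h$ is exactly $W$ times $B_{ij}^{kl}$ times a positive weight, so $\mathcal{L}_{i}h=0$ for every $i$. Hence $\ker\mathcal{L}=M^{1/2}\cdot\{\text{collision invariants}\}$; applying Proposition $\ref{P2}$ and multiplying each generator componentwise by $M^{1/2}$ (noting $M^{1/2}\cdot 2\mathbb{I}=2\mathcal{M}^{1/2}I$) yields the stated generators.

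\textbf{Main obstacle.} The only delicate point is the passage between vanishing $d\widetilde{A}_{ij}^{kl}$-a.e.\ and vanishing $W$-a.e.\ (equivalently, $B_{ij}^{kl}W=0$ a.e.), which rests on the strict positivity of the Maxwellian weight $\left(M_{i}M_{j\ast}M_{k}^{\prime}M_{l\ast}^{\prime}/\varphi_{i}\varphi_{j}\varphi_{k}\varphi_{l}\right)^{1/2}$; this holds since the $M_{i}$ are everywhere positive and $\varphi_{i}>0$. I would also take care that the forward and backward directions are carried out on a common domain on which both the weak form of Lemma $\ref{L2}$ and the pointwise formula $\left(\ref{dec2}\right)$ apply. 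Notably, no closability or Cauchy–Schwarz argument for the form is required, because the reverse inclusion is obtained directly from the explicit integrand in $\left(\ref{dec2}\right)$ rather than from the quadratic form.
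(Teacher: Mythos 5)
Your proposal is correct and follows essentially the same route as the paper's proof: symmetry and nonnegativity are read off the symmetrized weak form of Lemma \ref{L2} with its nonnegative measure $d\widetilde{A}_{ij}^{kl}$, the kernel is characterized by the a.e.\ vanishing of the bracket against $W$ (using strict positivity of the Maxwellian weight), and Proposition \ref{P2} then yields the stated generators. Your one refinement is to prove the inclusion $\ker \mathcal{L}\supseteq M^{1/2}\cdot \{\text{collision invariants}\}$ directly from the pointwise integrand in $\left( \ref{dec2}\right) $, whereas the paper simply asserts the equivalence $h\in \ker \mathcal{L}\Leftrightarrow \left( \mathcal{L}h,h\right) =0$, whose converse direction otherwise rests on a Cauchy--Schwarz argument for the nonnegative form; this is a sensible tightening of the same argument rather than a different approach.
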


\begin{proof}
By Lemma $\ref{L2}$, it is immediate that $\left( \mathcal{L}h,g\right)
=\left( h,\mathcal{L}g\right) $, and%
\begin{equation*}
\left( \mathcal{L}h,h\right) =\frac{1}{4}\sum\limits_{i,j,k,l=1}^{r}\int_{%
\left( \mathbb{R}^{3}\right) ^{4}}\left( \frac{h_{i}}{M_{i}^{1/2}}+\frac{%
h_{j\ast }}{M_{j\ast }^{1/2}}-\frac{h_{k}^{\prime }}{\left( M_{k}^{\prime
}\right) ^{1/2}}-\frac{h_{l\ast }^{\prime }}{\left( M_{l\ast }^{\prime
}\right) ^{1/2}}\right) ^{2}d\widetilde{A}_{ij}^{kl}\geq 0.
\end{equation*}%
Furthermore, $h\in \ker \mathcal{L}$ if and only if $\left( \mathcal{L}%
h,h\right) =0$, which will be fulfilled\ if and only if for all $\left\{
i,j,k,l\right\} \subseteq \left\{ 1,\ldots ,r\right\} $%
\begin{equation*}
\left( \frac{h_{i}}{M_{i}^{1/2}}+\frac{h_{j\ast }}{M_{j\ast }^{1/2}}-\frac{%
h_{k}^{\prime }}{\left( M_{k}^{\prime }\right) ^{1/2}}-\frac{h_{l\ast
}^{\prime }}{\left( M_{l\ast }^{\prime }\right) ^{1/2}}\right) W(\boldsymbol{%
\xi },\boldsymbol{\xi }_{\ast },I_{i},I_{j}\left\vert \boldsymbol{\xi }%
^{\prime },\boldsymbol{\xi }_{\ast }^{\prime },I_{k},I_{l}\right. )\,=0\text{
a.e.,}
\end{equation*}%
i.e. if and only if $\mathcal{M}^{-1/2}h$ is a collision invariant. The last
part of the lemma now follows by Proposition $\ref{P2}.$
\end{proof}

\begin{remark}
Note also that the quadratic term is orthogonal to the kernel of $\mathcal{L}
$, i.e. $\Gamma \left( h,h\right) \in \left( \ker \mathcal{L}\right) ^{\perp
_{\mathcal{\mathfrak{h}}^{(r)}}}$.
\end{remark}

\subsection{Multicomponent mixtures\label{S2.2}}

Consider a mixture of $s$ monatomic species $a_{1},...,a_{s}$, with masses $%
m_{\alpha _{1}},...,m_{\alpha _{s}}$, respectively ($s=1$ corresponds to the
case of a single species). The distribution functions are of the form $%
f=\left( f_{1},...,f_{s}\right) $, where $f_{\alpha }=f_{\alpha }\left( t,%
\mathbf{x},\boldsymbol{\xi }\right) $, with $t\in \mathbb{R}_{+}$, $\mathbf{x%
}=\left( x,y,z\right) \in \mathbb{R}^{3}$, and $\boldsymbol{\xi }=\left( \xi
_{x},\xi _{y},\xi _{z}\right) \in \mathbb{R}^{3}$.

Moreover, consider the real Hilbert space $\mathcal{\mathfrak{h}}%
^{(s)}:=\left( L^{2}\left( d\boldsymbol{\xi }\right) \right) ^{s}$, with
inner product%
\begin{equation*}
\left( \left. f\right\vert g\right) =\sum_{\alpha =1}^{s}\int_{\mathbb{R}%
^{3}}f_{\alpha }g_{\alpha }\,d\boldsymbol{\xi }\text{, }f,g\in \left(
L^{2}\left( d\mathbf{v}\right) \right) ^{s}\text{.}
\end{equation*}

The evolution of the distribution functions is (in the absence of external
forces) described by the (vector) Boltzmann equation $\left( \ref{BE1}%
\right) $,\ where the (vector) collision operator $Q=\left(
Q_{1},...,Q_{s}\right) $ is a quadratic bilinear operator that accounts for
the change of velocities of particles due to binary collisions (assuming
that the gas is rarefied, such that other collisions are negligible).

\subsubsection{Collision operator\label{S2.2.1}}

The (vector) collision operator $Q=\left( Q_{1},...,Q_{s}\right) $ has
components that can be written in the following form - reminding the
abbreviations $\left( \ref{a1}\right) $, 
\begin{equation*}
Q_{\alpha }(f,f)=\sum\limits_{\beta =1}^{s}\int_{\left( \mathbb{R}%
^{3}\right) ^{3}}W_{\alpha \beta }(\boldsymbol{\xi },\boldsymbol{\xi }_{\ast
}\left\vert \boldsymbol{\xi }^{\prime },\boldsymbol{\xi }_{\ast }^{\prime
}\right. )\,\left( f_{\alpha }^{\prime }f_{\beta \ast }^{\prime }-f_{\alpha
}f_{\beta \ast }\right) \,d\boldsymbol{\xi }_{\ast }d\boldsymbol{\xi }%
^{\prime }d\boldsymbol{\xi }_{\ast }^{\prime }\text{.}
\end{equation*}%
The transition probabilities are of the form cf. \cite[p.65]{Cercignani-88} 
\begin{eqnarray}
&&W_{\alpha \beta }(\boldsymbol{\xi },\boldsymbol{\xi }_{\ast }\left\vert 
\boldsymbol{\xi }^{\prime },\boldsymbol{\xi }_{\ast }^{\prime }\right. ) 
\notag \\
&=&2\left( m_{\alpha }+m_{\beta }\right) ^{2}m_{\alpha }m_{\beta }\sigma
_{\alpha \beta }\delta _{1}\left( m_{\alpha }\left\vert \boldsymbol{\xi }%
\right\vert ^{2}+m_{\beta }\left\vert \boldsymbol{\xi }_{\ast }\right\vert
^{2}-m_{\alpha }\left\vert \boldsymbol{\xi }^{\prime }\right\vert
^{2}-m_{\beta }\left\vert \boldsymbol{\xi }_{\ast }^{\prime }\right\vert
^{2}\right)  \notag \\
&&\times \delta _{3}\left( m_{\alpha }\boldsymbol{\xi }+m_{\beta }%
\boldsymbol{\xi }_{\ast }-m_{\alpha }\boldsymbol{\xi }^{\prime }-m_{\beta }%
\boldsymbol{\xi }_{\ast }^{\prime }\right) \text{, with }\sigma _{\alpha
\beta }=\sigma _{\alpha \beta }\left( \left\vert \mathbf{g}\right\vert ,\cos
\theta \right) >0\text{ a.e.,}  \notag \\
&&\text{ }\cos \theta =\frac{\mathbf{g}\cdot \mathbf{g}^{\prime }}{%
\left\vert \mathbf{g}\right\vert \left\vert \mathbf{g}^{\prime }\right\vert }%
\text{, \ }\mathbf{g}=\boldsymbol{\xi }-\boldsymbol{\xi }_{\ast }\text{, and 
}\mathbf{g}^{\prime }=\boldsymbol{\xi }^{\prime }-\boldsymbol{\xi }_{\ast
}^{\prime }\text{.}  \label{tp2a}
\end{eqnarray}%
where $\delta _{3}$ and $\delta _{1}$ denote the Dirac's delta function in $%
\mathbb{R}^{3}$ and $\mathbb{R}$, respectively; taking the conservation of
momentum and kinetic energy into account. The scattering cross sections $%
\sigma _{\alpha \beta }$, $\left\{ \alpha ,\beta \right\} \subseteq \left\{
1,...,s\right\} $, satisfy the symmetry relation%
\begin{equation}
\sigma _{\alpha \beta }=\sigma _{\beta \alpha }\text{,}  \label{sr2}
\end{equation}%
while 
\begin{equation}
\text{ }\sigma _{\alpha \alpha }=\sigma _{\alpha \alpha }\left( \left\vert 
\mathbf{g}\right\vert ,\left\vert \cos \theta \right\vert \right) \text{.}
\label{cf1}
\end{equation}%
Applying known properties of Dirac's delta function, the transition
probabilities may be transformed to 
\begin{eqnarray}
&&W_{\alpha \beta }(\boldsymbol{\xi },\boldsymbol{\xi }_{\ast }\left\vert 
\boldsymbol{\xi }^{\prime },\boldsymbol{\xi }_{\ast }^{\prime }\right. ) 
\notag \\
&=&2\left( m_{\alpha }+m_{\beta }\right) ^{3}\mu _{\alpha \beta }\delta
_{3}\left( \left( m_{\alpha }+m_{\beta }\right) \left( \mathbf{G}_{\alpha
\beta }-\mathbf{G}_{\alpha \beta }^{\prime }\right) \right) \delta
_{1}\left( \mu _{\alpha \beta }\left( \left\vert \mathbf{g}\right\vert
^{2}-\left\vert \mathbf{g}^{\prime }\right\vert ^{2}\right) \right)  \notag
\\
&=&\frac{\sigma _{\alpha \beta }}{\left\vert \mathbf{g}\right\vert }\delta
_{3}\left( \mathbf{G}_{\alpha \beta }-\mathbf{G}_{\alpha \beta }^{\prime
}\right) \delta _{1}\left( \left\vert \mathbf{g}\right\vert -\left\vert 
\mathbf{g}^{\prime }\right\vert \right) \text{, where}  \notag \\
&&\mu _{\alpha \beta }=\frac{m_{\alpha }m_{\beta }}{m_{\alpha }+m_{\beta }}%
\text{, }\mathbf{G}_{\alpha \beta }=\frac{m_{\alpha }\boldsymbol{\xi }%
+m_{\beta }\boldsymbol{\xi }_{\ast }}{m_{\alpha }+m_{\beta }}\text{, and }%
\mathbf{G}_{\alpha \beta }^{\prime }=\frac{m_{\alpha }\boldsymbol{\xi }%
^{\prime }+m_{\beta }\boldsymbol{\xi }_{\ast }^{\prime }}{m_{\alpha
}+m_{\beta }}\text{,}  \label{tp2}
\end{eqnarray}%
Due to invariance under change of particles in a collision and
microreversibility of the collisions, which follows directly by the
definition of the transition probability $\left( \ref{tp2a}\right) $, $%
\left( \ref{tp2}\right) $, $\left( \ref{cf1}\right) $, and the symmetry
relation $\left( \ref{sr2}\right) $ for the collision frequency, the
transition probabilities $\left( \ref{tp2a}\right) $ satisfy the relations%
\begin{eqnarray}
W_{\alpha \beta }(\boldsymbol{\xi },\boldsymbol{\xi }_{\ast }\left\vert 
\boldsymbol{\xi }^{\prime },\boldsymbol{\xi }_{\ast }^{\prime }\right. )
&=&W_{\beta \alpha }(\boldsymbol{\xi }_{\ast },\boldsymbol{\xi }\left\vert 
\boldsymbol{\xi }_{\ast }^{\prime },\boldsymbol{\xi }^{\prime }\right. ) 
\notag \\
W_{\alpha \beta }(\boldsymbol{\xi },\boldsymbol{\xi }_{\ast }\left\vert 
\boldsymbol{\xi }^{\prime },\boldsymbol{\xi }_{\ast }^{\prime }\right. )
&=&W_{\alpha \beta }(\boldsymbol{\xi }^{\prime },\boldsymbol{\xi }_{\ast
}^{\prime }\left\vert \boldsymbol{\xi },\boldsymbol{\xi }_{\ast }\right. ) 
\notag \\
W_{\alpha \alpha }(\boldsymbol{\xi },\boldsymbol{\xi }_{\ast }\left\vert 
\boldsymbol{\xi }^{\prime },\boldsymbol{\xi }_{\ast }^{\prime }\right. )
&=&W_{\alpha \alpha }(\boldsymbol{\xi },\boldsymbol{\xi }_{\ast }\left\vert 
\boldsymbol{\xi }_{\ast }^{\prime },\boldsymbol{\xi }^{\prime }\right. )%
\text{.}  \label{rel3}
\end{eqnarray}

\bigskip By a change of variables $\left\{ \boldsymbol{\xi }^{\prime },%
\boldsymbol{\xi }_{\ast }^{\prime }\right\} \rightarrow \left\{ \mathbf{g}%
^{\prime }=\boldsymbol{\xi }^{\prime }-\boldsymbol{\xi }_{\ast }^{\prime },%
\mathbf{G}_{\alpha \beta }^{\prime }=\dfrac{m_{\alpha }\boldsymbol{\xi }%
^{\prime }+m_{\beta }\boldsymbol{\xi }_{\ast }^{\prime }}{m_{\alpha
}+m_{\beta }}\right\} $, noting that%
\begin{equation}
d\boldsymbol{\xi }^{\prime }d\boldsymbol{\xi }_{\ast }^{\prime }=d\mathbf{G}%
_{\alpha \beta }^{\prime }d\mathbf{g}^{\prime }=\left\vert \mathbf{g}%
^{\prime }\right\vert ^{2}d\mathbf{G}_{\alpha \beta }^{\prime }d\left\vert 
\mathbf{g}^{\prime }\right\vert d\boldsymbol{\omega }\text{, }\boldsymbol{%
\omega }=\frac{\mathbf{g}^{\prime }}{\left\vert \mathbf{g}^{\prime
}\right\vert }\text{,}  \label{df2}
\end{equation}%
the observation that%
\begin{eqnarray*}
&&Q_{\alpha }(f,f) \\
&=&\sum\limits_{\beta =1}^{s}\int_{\left( \mathbb{R}^{3}\right) ^{2}\times 
\mathbb{R}_{+}\times \mathbb{S}^{2}}\!\!W_{\alpha \beta }(\boldsymbol{\xi },%
\boldsymbol{\xi }_{\ast }\left\vert \boldsymbol{\xi }^{\prime },\boldsymbol{%
\xi }_{\ast }^{\prime }\right. )\left( f_{\alpha }^{\prime }f_{\beta \ast
}^{\prime }-f_{\alpha }f_{\beta \ast }\right) \left\vert \mathbf{g}^{\prime
}\right\vert ^{2}d\boldsymbol{\xi }_{\ast }d\mathbf{G}_{\alpha \beta
}^{\prime }d\left\vert \mathbf{g}^{\prime }\right\vert d\boldsymbol{\omega }
\\
&=&\sum\limits_{\beta =1}^{s}\int_{\mathbb{R}\times \mathbb{S}%
^{2}}\left\vert \mathbf{g}\right\vert \,\sigma _{\alpha \beta }\left(
f_{\alpha }^{\prime }f_{\beta \ast }^{\prime }-f_{\alpha }f_{\beta \ast
}\right) \,d\boldsymbol{\xi }_{\ast }d\boldsymbol{\omega }\text{,}
\end{eqnarray*}%
where%
\begin{equation*}
\left\{ 
\begin{array}{c}
\boldsymbol{\xi }^{\prime }=\dfrac{m_{\alpha }\boldsymbol{\xi }+m_{\beta }%
\boldsymbol{\xi }_{\ast }}{m_{\alpha }+m_{\alpha _{j}}}+\dfrac{m_{\beta }}{%
m_{\alpha }+m_{\beta }}\left\vert \boldsymbol{\xi }-\boldsymbol{\xi }_{\ast
}\right\vert \omega =\mathbf{G}_{\alpha \beta }+\dfrac{m_{\beta }}{m_{\alpha
}+m_{\beta }}\left\vert \mathbf{g}\right\vert \omega \medskip \\ 
\boldsymbol{\xi }_{\ast }^{\prime }=\dfrac{m_{\alpha }\boldsymbol{\xi }%
+m_{\beta }\boldsymbol{\xi }_{\ast }}{m_{\alpha }+m_{\beta }}-\dfrac{%
m_{\alpha }}{m_{\alpha }+m_{\beta }}\left\vert \boldsymbol{\xi }-\boldsymbol{%
\xi }_{\ast }\right\vert \omega =\mathbf{G}_{\alpha \beta }-\dfrac{m_{\alpha
}}{m_{\alpha }+m_{\beta }}\left\vert \mathbf{g}\right\vert \omega%
\end{array}%
\right. \!\!\!\text{, }\omega \in S^{2}\!\text{,}
\end{equation*}%
can be made, resulting in a more familiar form of the Boltzmann collision
operator for mixtures.

\subsubsection{Collision invariants and Maxwellian distributions\label%
{S2.2.2}}

The following lemma follows directly by the relations $\left( \ref{rel3}%
\right) $.

\begin{lemma}
\label{L0a}The measures 
\begin{equation*}
dA_{\alpha \beta }=W_{\alpha \beta }(\boldsymbol{\xi },\boldsymbol{\xi }%
_{\ast }\left\vert \boldsymbol{\xi }^{\prime },\boldsymbol{\xi }_{\ast
}^{\prime }\right. )\,d\boldsymbol{\xi }\,d\boldsymbol{\xi }_{\ast }d%
\boldsymbol{\xi }^{\prime }d\boldsymbol{\xi }_{\ast }^{\prime }\text{, }%
\left\{ \alpha ,\beta \right\} \subseteq \left\{ 1,\ldots ,s\right\} \text{,}
\end{equation*}%
are invariant under the (ordered) interchange 
\begin{equation}
\left( \boldsymbol{\xi },\boldsymbol{\xi }_{\ast }\right) \leftrightarrow
\left( \boldsymbol{\xi }^{\prime },\boldsymbol{\xi }_{\ast }^{\prime }\right)
\label{tr1}
\end{equation}%
of variables, while%
\begin{equation*}
dA_{\alpha \beta }+dA_{\beta \alpha }\text{, }\left\{ \alpha ,\beta \right\}
\subseteq \left\{ 1,\ldots ,s\right\} \text{,}
\end{equation*}%
are invariant under the (ordered) interchange of variables%
\begin{equation}
\left( \boldsymbol{\xi },\boldsymbol{\xi }^{\prime }\right) \leftrightarrow
\left( \boldsymbol{\xi }_{\ast },\boldsymbol{\xi }_{\ast }^{\prime }\right) 
\text{.}  \label{tr1a}
\end{equation}
\end{lemma}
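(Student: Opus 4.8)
The plan is to verify each of the two invariance claims by tracking how the density $W_{\alpha \beta }$ and the underlying Lebesgue factor transform under the relevant interchange, and then invoking the matching symmetry relation in $\left( \ref{rel3}\right) $. The one structural observation that makes everything routine is that each interchange is merely a permutation of the four velocity coordinates; since $d\boldsymbol{\xi }\,d\boldsymbol{\xi }_{\ast }d\boldsymbol{\xi }^{\prime }d\boldsymbol{\xi }_{\ast }^{\prime }$ is a product of Lebesgue measures on $\mathbb{R}^{3}$, it is left unchanged (up to reordering of factors) by any such permutation, and each interchange is its own inverse. Hence proving invariance of the measure reduces entirely to checking that the transition probability, rewritten in the interchanged variables, coincides with the prescribed density.

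For the first claim I would apply the interchange $\left( \boldsymbol{\xi },\boldsymbol{\xi }_{\ast }\right) \leftrightarrow \left( \boldsymbol{\xi }^{\prime },\boldsymbol{\xi }_{\ast }^{\prime }\right) $ to $dA_{\alpha \beta }$. This sends the factor $W_{\alpha \beta }(\boldsymbol{\xi },\boldsymbol{\xi }_{\ast }\left\vert \boldsymbol{\xi }^{\prime },\boldsymbol{\xi }_{\ast }^{\prime }\right. )$ to $W_{\alpha \beta }(\boldsymbol{\xi }^{\prime },\boldsymbol{\xi }_{\ast }^{\prime }\left\vert \boldsymbol{\xi },\boldsymbol{\xi }_{\ast }\right. )$, which by the second relation in $\left( \ref{rel3}\right) $ equals $W_{\alpha \beta }(\boldsymbol{\xi },\boldsymbol{\xi }_{\ast }\left\vert \boldsymbol{\xi }^{\prime },\boldsymbol{\xi }_{\ast }^{\prime }\right. )$ again. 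Combined with the invariance of the Lebesgue factor this gives $dA_{\alpha \beta }\mapsto dA_{\alpha \beta }$, as claimed.

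For the second claim I would apply the interchange $\left( \boldsymbol{\xi },\boldsymbol{\xi }^{\prime }\right) \leftrightarrow \left( \boldsymbol{\xi }_{\ast },\boldsymbol{\xi }_{\ast }^{\prime }\right) $, which carries $W_{\alpha \beta }(\boldsymbol{\xi },\boldsymbol{\xi }_{\ast }\left\vert \boldsymbol{\xi }^{\prime },\boldsymbol{\xi }_{\ast }^{\prime }\right. )$ to $W_{\alpha \beta }(\boldsymbol{\xi }_{\ast },\boldsymbol{\xi }\left\vert \boldsymbol{\xi }_{\ast }^{\prime },\boldsymbol{\xi }^{\prime }\right. )$; by the first relation in $\left( \ref{rel3}\right) $ this last expression equals $W_{\beta \alpha }(\boldsymbol{\xi },\boldsymbol{\xi }_{\ast }\left\vert \boldsymbol{\xi }^{\prime },\boldsymbol{\xi }_{\ast }^{\prime }\right. )$, so the interchange sends $dA_{\alpha \beta }$ to $dA_{\beta \alpha }$ rather than back to itself. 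Consequently the symmetric sum $dA_{\alpha \beta }+dA_{\beta \alpha }$ is invariant, being mapped to $dA_{\beta \alpha }+dA_{\alpha \beta }$.

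The argument is essentially bookkeeping, so I anticipate no substantial obstacle; the only point genuinely requiring care is the distinction in the second claim between the single measure and the symmetrized sum. One must resist asserting invariance of $dA_{\alpha \beta }$ alone under the second interchange: this holds only on the diagonal $\alpha =\beta $, where it follows from the first relation in $\left( \ref{rel3}\right) $ with $\alpha =\beta $ (consistent with the dependence on $\left\vert \cos \theta \right\vert $ recorded in $\left( \ref{cf1}\right) $), whereas off the diagonal the mixture structure forces the use of the sum $dA_{\alpha \beta }+dA_{\beta \alpha }$.
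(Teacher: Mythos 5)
Your proof is correct and is essentially the paper's own argument spelled out: the paper simply states that the lemma follows directly from the relations $\left( \ref{rel3}\right) $, and your verification — the second relation handling the interchange $\left( \ref{tr1}\right) $, the first relation showing that $\left( \ref{tr1a}\right) $ sends $dA_{\alpha \beta }$ to $dA_{\beta \alpha }$ so only the symmetrized sum is invariant, together with the observation that the product Lebesgue factor is permutation-invariant — is precisely that argument. Your closing caveat about the diagonal case $\alpha =\beta $ is accurate and consistent with $\left( \ref{cf1}\right) $.
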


The weak form of the collision operator $Q(f,f)$ reads%
\begin{eqnarray*}
\left( Q(f,f),g\right) &=&\sum\limits_{\alpha ,\beta =1}^{s}\int_{\left( 
\mathbb{R}^{3}\right) ^{4}}\left( f_{\alpha }^{\prime }f_{\beta \ast
}^{\prime }-f_{\alpha }f_{\beta \ast }\right) g_{\alpha }\,dA_{\alpha \beta }
\\
&=&\sum\limits_{\alpha ,\beta =1}^{s}\int_{\left( \mathbb{R}^{3}\right)
^{4}}\left( f_{\alpha }^{\prime }f_{\beta \ast }^{\prime }-f_{\alpha
}f_{\beta \ast }\right) g_{\beta \ast }\,dA_{\alpha \beta } \\
&=&-\sum\limits_{\alpha ,\beta =1}^{s}\int_{\left( \mathbb{R}^{3}\right)
^{4}}\left( f_{\alpha }^{\prime }f_{\beta \ast }^{\prime }-f_{\alpha
}f_{\beta \ast }\right) g_{\alpha }^{\prime }\,dA_{\alpha \beta } \\
&=&-\sum\limits_{\alpha ,\beta =1}^{s}\int_{\left( \mathbb{R}^{3}\right)
^{4}}\left( f_{\alpha }^{\prime }f_{\beta \ast }^{\prime }-f_{\alpha
}f_{\beta \ast }\right) g_{\beta \ast }^{\prime }\,dA_{\alpha \beta },
\end{eqnarray*}%
for any function $g=\left( g_{1},...,g_{s}\right) $, such that the first
integrals are defined for all $\left\{ \alpha ,\beta \right\} \subseteq
\left\{ 1,\ldots ,s\right\} $, while the following equalities are obtained
by applying Lemma $\ref{L0a}$.

We have the following proposition.

\begin{proposition}
\label{P1a}Let $g=\left( g_{1},...,g_{s}\right) $ be such that 
\begin{equation*}
\int_{\left( \mathbb{R}^{3}\right) ^{4}}\left( f_{\alpha }^{\prime }f_{\beta
\ast }^{\prime }-f_{\alpha }f_{\beta \ast }\right) g_{\alpha }\,dA_{\alpha
\beta }
\end{equation*}%
is defined for any $\left\{ \alpha ,\beta \right\} \subseteq \left\{
1,\ldots ,s\right\} $. Then%
\begin{equation*}
\left( Q(f,f),g\right) =\frac{1}{4}\sum\limits_{\alpha ,\beta
=1}^{s}\int_{\left( \mathbb{R}^{3}\right) ^{4}}\left( f_{\alpha }^{\prime
}f_{\beta \ast }^{\prime }-f_{\alpha }f_{\beta \ast }\right) \left(
g_{\alpha }+g_{\beta \ast }-g_{\alpha }^{\prime }-g_{\beta \ast }^{\prime
}\right) \,dA_{\alpha \beta }.
\end{equation*}
\end{proposition}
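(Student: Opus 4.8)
The plan is to exploit the four representations of $\left( Q(f,f),g\right) $ displayed immediately above the statement, all of which are valid under the stated integrability hypothesis and which follow from the invariances recorded in Lemma~\ref{L0a}. Writing $\Phi _{\alpha \beta }:=f_{\alpha }^{\prime }f_{\beta \ast }^{\prime }-f_{\alpha }f_{\beta \ast }$ for brevity, the first identity reads $\left( Q(f,f),g\right) =\sum_{\alpha ,\beta }\int \Phi _{\alpha \beta }\,g_{\alpha }\,dA_{\alpha \beta }$, the second is the same with $g_{\alpha }$ replaced by $g_{\beta \ast }$, and the third and fourth are the same with $g_{\alpha }$ replaced by $-g_{\alpha }^{\prime }$ and $-g_{\beta \ast }^{\prime }$, respectively. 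Since each of these four expressions equals the single number $\left( Q(f,f),g\right) $, so does their arithmetic mean.

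First I would add the four identities and divide by four. Because the common factor $\Phi _{\alpha \beta }$ and the measure $dA_{\alpha \beta }$ are identical in all four integrals, the only effect of averaging is to combine the four test-function factors $g_{\alpha }$, $g_{\beta \ast }$, $-g_{\alpha }^{\prime }$, $-g_{\beta \ast }^{\prime }$ into the single bracket $g_{\alpha }+g_{\beta \ast }-g_{\alpha }^{\prime }-g_{\beta \ast }^{\prime }$, giving
\begin{equation*}
\left( Q(f,f),g\right) =\frac{1}{4}\sum_{\alpha ,\beta =1}^{s}\int_{\left( \mathbb{R}^{3}\right) ^{4}}\Phi _{\alpha \beta }\left( g_{\alpha }+g_{\beta \ast }-g_{\alpha }^{\prime }-g_{\beta \ast }^{\prime }\right) \,dA_{\alpha \beta },
\end{equation*}
which is the claimed formula; the integrability hypothesis is exactly what licenses splitting this combined integral back into the four separately convergent pieces. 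The whole argument is the exact counterpart of the proof of Proposition~\ref{P1} in the polyatomic setting.

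The step I expect to carry the real content is not the averaging, which is mechanical, but the prior justification of the second and fourth representations. In the monatomic single-species and polyatomic cases a single transition measure is invariant under each relevant interchange, whereas here the interchange $\left( \boldsymbol{\xi },\boldsymbol{\xi }^{\prime }\right) \leftrightarrow \left( \boldsymbol{\xi }_{\ast },\boldsymbol{\xi }_{\ast }^{\prime }\right) $ only leaves $dA_{\alpha \beta }+dA_{\beta \alpha }$ invariant, by Lemma~\ref{L0a}. Consequently the passage from $g_{\alpha }$ to $g_{\beta \ast }$ (and from $g_{\alpha }^{\prime }$ to $g_{\beta \ast }^{\prime }$) must be accompanied by the relabeling $\alpha \leftrightarrow \beta $ of the summation indices: after renaming, the factor $\Phi _{\beta \alpha }g_{\beta }$ is transported by the swap $\boldsymbol{\xi }\leftrightarrow \boldsymbol{\xi }_{\ast }$, $\boldsymbol{\xi }^{\prime }\leftrightarrow \boldsymbol{\xi }_{\ast }^{\prime }$ into $\Phi _{\alpha \beta }g_{\beta \ast }$, while the first relation in $\left( \ref{rel3}\right) $ restores the kernel $W_{\alpha \beta }$. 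It is precisely because the outer sum runs over all ordered pairs $\left( \alpha ,\beta \right) $ that this symmetrization is available and the species-swapping interchange becomes usable. Once those two representations are secured — as the text above already asserts they are — the proposition follows immediately.
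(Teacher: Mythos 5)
Your proposal is correct and follows essentially the same route as the paper, which obtains the result by averaging the four representations of the weak form displayed just before the statement, each justified by Lemma~\ref{L0a}. Your added observation — that the second and fourth representations require the relabeling $\alpha \leftrightarrow \beta$ of the summation indices because only $dA_{\alpha \beta }+dA_{\beta \alpha }$ is invariant under the interchange $\left( \boldsymbol{\xi },\boldsymbol{\xi }^{\prime }\right) \leftrightarrow \left( \boldsymbol{\xi }_{\ast },\boldsymbol{\xi }_{\ast }^{\prime }\right) $, with the first relation in $\left( \ref{rel3}\right) $ restoring the kernel — correctly makes explicit the step the paper leaves implicit in Lemma~\ref{L0a}.
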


\begin{definition}
A function $g=g\left( \boldsymbol{\xi },I\right) \ $is a collision invariant
if 
\begin{equation*}
\left( g_{\alpha }+g_{\beta \ast }-g_{\alpha }^{\prime }-g_{\beta \ast
}^{\prime }\right) W_{\alpha \beta }(\boldsymbol{\xi },\boldsymbol{\xi }%
_{\ast }\left\vert \boldsymbol{\xi }^{\prime },\boldsymbol{\xi }_{\ast
}^{\prime }\right. )=0\text{ a.e.}
\end{equation*}%
for all $\left\{ \alpha ,\beta \right\} \subseteq \left\{ 1,\ldots
,s\right\} $.
\end{definition}

It is clear that $e_{1},$ $...,$ $e_{s},$ $\mathbf{m}\xi _{x},$ $\mathbf{m}%
\xi _{y},$ $\mathbf{m}\xi _{z},$ and $\mathbf{m}\left\vert \boldsymbol{\xi }%
\right\vert ^{2}$, where $\left\{ e_{1},...,e_{s}\right\} $ is the standard
basis of $R^{s}$ and $\mathbf{m}=\left( m_{1},...,m_{s}\right) $, are
collision invariants - corresponding to conservation of mass(es), momentum,
and kinetic energy.

In fact, we have the following proposition, cf. \cite{GS-99, Cercignani-88}.

\begin{proposition}
\label{P2a}The vector space of collision invariants is generated by 
\begin{equation*}
\left\{ e_{1},...,e_{s},\mathbf{m}\xi _{x},\mathbf{m}\xi _{y},\mathbf{m}\xi
_{z},\mathbf{m}\left\vert \boldsymbol{\xi }\right\vert ^{2}\right\} \text{,}
\end{equation*}%
where $\mathbf{m}=\left( m_{1},...,m_{s}\right) $ and $\left\{
e_{1},...,e_{s}\right\} $ is the standard basis of $R^{s}$.
\end{proposition}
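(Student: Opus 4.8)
The plan is to establish the nontrivial inclusion: every collision invariant $g=(g_{1},\ldots ,g_{s})$ lies in the span of the five listed vectors. The reverse inclusion is already recorded in the text preceding the statement, since direct substitution using conservation of mass, momentum, and kinetic energy shows that each listed vector annihilates $g_{\alpha }+g_{\beta \ast }-g_{\alpha }^{\prime }-g_{\beta \ast }^{\prime }$ on the support of $W_{\alpha \beta }$.

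First I would isolate the self-collision terms $\alpha =\beta $. For these the masses coincide, so the support of $W_{\alpha \alpha }$ is exactly the set where $\boldsymbol{\xi }+\boldsymbol{\xi }_{\ast }=\boldsymbol{\xi }^{\prime }+\boldsymbol{\xi }_{\ast }^{\prime }$ and $\left\vert \boldsymbol{\xi }\right\vert ^{2}+\left\vert \boldsymbol{\xi }_{\ast }\right\vert ^{2}=\left\vert \boldsymbol{\xi }^{\prime }\right\vert ^{2}+\left\vert \boldsymbol{\xi }_{\ast }^{\prime }\right\vert ^{2}$, and the defining relation reduces to the classical single-species functional equation $g_{\alpha }(\boldsymbol{\xi })+g_{\alpha }(\boldsymbol{\xi }_{\ast })=g_{\alpha }(\boldsymbol{\xi }^{\prime })+g_{\alpha }(\boldsymbol{\xi }_{\ast }^{\prime })$. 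Invoking the classical characterization of monatomic single-species collision invariants (cf. \cite{GS-99, Cercignani-88}) then yields, for each $\alpha $, constants $a_{\alpha },c_{\alpha }\in \mathbb{R}$ and $\mathbf{b}_{\alpha }\in \mathbb{R}^{3}$ with $g_{\alpha }(\boldsymbol{\xi })=a_{\alpha }+\mathbf{b}_{\alpha }\cdot \boldsymbol{\xi }+c_{\alpha }\left\vert \boldsymbol{\xi }\right\vert ^{2}$.

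The main work is to tie these coefficients together using the cross-collisions $\alpha \neq \beta $. Substituting the polynomial forms into the invariance relation, the constant terms cancel; using momentum conservation to write $\boldsymbol{\xi }_{\ast }-\boldsymbol{\xi }_{\ast }^{\prime }=-\tfrac{m_{\alpha }}{m_{\beta }}(\boldsymbol{\xi }-\boldsymbol{\xi }^{\prime })$, energy conservation to write $\left\vert \boldsymbol{\xi }_{\ast }\right\vert ^{2}-\left\vert \boldsymbol{\xi }_{\ast }^{\prime }\right\vert ^{2}=-\tfrac{m_{\alpha }}{m_{\beta }}(\left\vert \boldsymbol{\xi }\right\vert ^{2}-\left\vert \boldsymbol{\xi }^{\prime }\right\vert ^{2})$, and the parametrization of Section $\ref{S2.2.1}$ (which forces $\left\vert \mathbf{g}^{\prime }\right\vert =\left\vert \mathbf{g}\right\vert $, $\mathbf{g}^{\prime }=\left\vert \mathbf{g}\right\vert \omega $, and hence the identity $\left\vert \boldsymbol{\xi }\right\vert ^{2}-\left\vert \boldsymbol{\xi }^{\prime }\right\vert ^{2}=2\mathbf{G}_{\alpha \beta }\cdot (\boldsymbol{\xi }-\boldsymbol{\xi }^{\prime })$), I would reduce the relation to the single scalar identity
\begin{equation*}
\left[ \mathbf{b}_{\alpha }-\frac{m_{\alpha }}{m_{\beta }}\mathbf{b}_{\beta }+2\left( c_{\alpha }-\frac{m_{\alpha }}{m_{\beta }}c_{\beta }\right) \mathbf{G}_{\alpha \beta }\right] \cdot \left( \mathbf{g}-\left\vert \mathbf{g}\right\vert \omega \right) =0,
\end{equation*}
valid for all $\mathbf{G}_{\alpha \beta }\in \mathbb{R}^{3}$, $\mathbf{g}\in \mathbb{R}^{3}$, and $\omega \in S^{2}$. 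Letting $\omega $ vary over $S^{2}$ forces the bracketed vector to vanish, and then letting $\mathbf{G}_{\alpha \beta }$ vary separates the quadratic part from the linear part, giving $c_{\alpha }/m_{\alpha }=c_{\beta }/m_{\beta }$ and $\mathbf{b}_{\alpha }/m_{\alpha }=\mathbf{b}_{\beta }/m_{\beta }$ for every colliding pair. Hence there are a common $c\in \mathbb{R}$ and $\mathbf{b}\in \mathbb{R}^{3}$ with $c_{\alpha }=c\,m_{\alpha }$ and $\mathbf{b}_{\alpha }=m_{\alpha }\mathbf{b}$, so $g_{\alpha }=a_{\alpha }+m_{\alpha }\mathbf{b}\cdot \boldsymbol{\xi }+c\,m_{\alpha }\left\vert \boldsymbol{\xi }\right\vert ^{2}$, which is precisely a combination of $e_{1},\ldots ,e_{s}$, $\mathbf{m}\xi _{x}$, $\mathbf{m}\xi _{y}$, $\mathbf{m}\xi _{z}$, and $\mathbf{m}\left\vert \boldsymbol{\xi }\right\vert ^{2}$.

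I expect the principal obstacle to be the single-species reduction step: proving that every merely measurable (a.e.) solution of the single-species invariance equation is a second-degree polynomial is the delicate classical rigidity result for collision invariants, which I would cite rather than reprove. A secondary point is connectivity of the mixture, since the coefficient-matching propagates $c$ and $\mathbf{b}$ across species only when each cross section $\sigma _{\alpha \beta }$ is positive on a set of full measure; as this holds for all pairs here ($\sigma _{\alpha \beta }>0$ a.e.), the pairwise relations indeed yield common constants across all species.
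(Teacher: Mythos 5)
Your proposal is correct, and it is worth noting that the paper itself supplies no proof of Proposition~\ref{P2a} at all: the statement is given with a bare citation to \cite{GS-99, Cercignani-88}. What you have written is a complete sketch of the standard argument that those references contain, so in substance you and the paper take the same route, with the difference that you actually carry out the mixture-specific part instead of delegating it. The two-step structure is the right one: the self-collision case $\alpha =\beta $ reduces to the single-species functional equation, whose rigidity (measurable a.e.\ solutions are quadratic polynomials) is the genuinely hard classical ingredient, and you are right to cite it rather than reprove it — that matches the paper's own level of rigor. Your cross-collision computation checks out: momentum and energy conservation give $\boldsymbol{\xi }_{\ast }-\boldsymbol{\xi }_{\ast }^{\prime }=-\tfrac{m_{\alpha }}{m_{\beta }}(\boldsymbol{\xi }-\boldsymbol{\xi }^{\prime })$ and the analogous relation for the squared speeds, the identity $\left\vert \boldsymbol{\xi }\right\vert ^{2}-\left\vert \boldsymbol{\xi }^{\prime }\right\vert ^{2}=2\mathbf{G}_{\alpha \beta }\cdot (\boldsymbol{\xi }-\boldsymbol{\xi }^{\prime })$ holds because $(\mathbf{g}+\left\vert \mathbf{g}\right\vert \omega )\cdot (\mathbf{g}-\left\vert \mathbf{g}\right\vert \omega )=0$, and for fixed $\mathbf{g}\neq 0$ the condition $\mathbf{v}\cdot (\mathbf{g}-\left\vert \mathbf{g}\right\vert \omega )=0$ for all $\omega \in S^{2}$ forces $\mathbf{v}\cdot \omega $ to be constant on $S^{2}$ and hence $\mathbf{v}=0$; varying $\mathbf{G}_{\alpha \beta }$ then separates $c_{\alpha }/m_{\alpha }=c_{\beta }/m_{\beta }$ from $\mathbf{b}_{\alpha }/m_{\alpha }=\mathbf{b}_{\beta }/m_{\beta }$ exactly as you say. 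Your closing remarks address the two genuine subtleties (measurable rigidity, and connectivity of the species graph via $\sigma _{\alpha \beta }>0$ a.e.). The only point I would ask you to make explicit is the a.e.-to-everywhere upgrade: the invariance relation holds only $W_{\alpha \beta }$-a.e., so the bracketed identity initially holds for a.e.\ $(\mathbf{G}_{\alpha \beta },\mathbf{g},\omega )$; since the left-hand side is polynomial in $(\mathbf{G}_{\alpha \beta },\mathbf{g})$ and continuous in $\omega $, it extends to all values, and only then may you specialize $\omega $ and $\mathbf{G}_{\alpha \beta }$ freely as your argument requires.
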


Define%
\begin{equation*}
\mathcal{W}\left[ f\right] :=\left( Q(f,f),\log f\right) .
\end{equation*}%
It follows by Proposition $\ref{P1a}$ that%
\begin{equation*}
\mathcal{W}\left[ f\right] =-\frac{1}{4}\sum\limits_{\alpha ,\beta
=1}^{s}\int_{\left( \mathbb{R}^{3}\right) ^{4}}f_{\alpha }f_{\beta \ast
}\left( \frac{f_{\alpha }^{\prime }f_{\beta \ast }^{\prime }}{f_{\alpha
}f_{\beta \ast }}-1\right) \log \left( \frac{f_{\alpha }^{\prime }f_{\beta
\ast }^{\prime }}{f_{\alpha }f_{\beta \ast }}\right) \,dA_{\alpha \beta
}\leq 0\text{,}
\end{equation*}%
with equality if and only if for all $\left\{ \alpha ,\beta \right\}
\subseteq \left\{ 1,\ldots ,s\right\} $ 
\begin{equation}
\left( f_{\alpha }f_{\beta \ast }-f_{\alpha }^{\prime }f_{\beta \ast
}^{\prime }\right) W_{\alpha \beta }(\boldsymbol{\xi },\boldsymbol{\xi }%
_{\ast }\left\vert \boldsymbol{\xi }^{\prime },\boldsymbol{\xi }_{\ast
}^{\prime }\right. )=0\text{ a.e.,}  \label{m1a}
\end{equation}%
or, equivalently, if and only if%
\begin{equation*}
Q(f,f)\equiv 0\text{.}
\end{equation*}

For any equilibrium, or Maxwellian, distribution $M=\left(
M_{1},...,M_{s}\right) $, it follows, by equation $\left( \ref{m1a}\right) $%
, since $Q(M,M)\equiv 0$, that for all $\left\{ \alpha ,\beta \right\}
\subseteq \left\{ 1,\ldots ,s\right\} $%
\begin{equation*}
\left( \log M_{\alpha }+\log M_{\beta \ast }-\log M_{\alpha }^{\prime }-\log
M\right) W_{\alpha \beta }(\boldsymbol{\xi },\boldsymbol{\xi }_{\ast
}\left\vert \boldsymbol{\xi }^{\prime },\boldsymbol{\xi }_{\ast }^{\prime
}\right. )=0\text{ a.e. .}
\end{equation*}%
Hence, $\log M=\left( \log M_{1},...,\log M_{s}\right) $ is a collision
invariant, and the components of the Maxwellian distributions $M=\left(
M_{1},...,M_{s}\right) $ are Gaussians 
\begin{equation*}
M_{\alpha }=n_{\alpha }\left( \frac{m_{\alpha }}{2\pi T}\right)
^{3/2}e^{-m_{\alpha }\left\vert \boldsymbol{\xi }-\mathbf{u}\right\vert
^{2}/\left( 2T\right) }\text{,}
\end{equation*}%
where $n_{\alpha }=\left( M,e_{\alpha }\right) $, $\mathbf{u}=\dfrac{1}{\rho 
}\left( M,\mathbf{m}\boldsymbol{\xi }\right) $, and $T=\dfrac{1}{3n}\left( M,%
\mathbf{m}\left\vert \boldsymbol{\xi }-\mathbf{u}\right\vert ^{2}\right) $,
with mass vector $\mathbf{m}=\left( m_{1},...,m_{s}\right) $, while $%
n=\sum\limits_{\alpha =1}^{s}n_{\alpha }$ and $\rho =\sum\limits_{\alpha
=1}^{s}m_{\alpha }n_{\alpha }$.

Note that by equation $\left( \ref{m1a}\right) $ any Maxwellian distribution 
$M=(M_{1},...,M_{s})$ satisfies the relations 
\begin{equation}
\left( M_{\alpha }^{\prime }M_{\beta \ast }^{\prime }-M_{\alpha }M_{\beta
\ast }\right) W_{\alpha \beta }(\boldsymbol{\xi },\boldsymbol{\xi }_{\ast
}\left\vert \boldsymbol{\xi }^{\prime },\boldsymbol{\xi }_{\ast }^{\prime
}\right. )=0.  \label{M2}
\end{equation}

\begin{remark}
Introducing the $\mathcal{H}$-functional%
\begin{equation*}
\mathcal{H}\left[ f\right] =\left( f,\log f\right) \text{,}
\end{equation*}%
an $\mathcal{H}$-theorem can be obtained.
\end{remark}

\subsubsection{Linearized collision operator\label{S2.2.3}}

Considering a deviation of a Maxwellian distribution $M=(M_{1},...,M_{s})$,
where the components are given by $M_{\alpha }=n_{\alpha }\left( \dfrac{%
m_{\alpha }}{2\pi }\right) ^{3/2}e^{-m_{\alpha }\left\vert \boldsymbol{\xi }%
\right\vert ^{2}/2}$, of the form $\left( \ref{s1}\right) $, results, by
insertion in the Boltzmann equation $\left( \ref{BE1}\right) $, in a system $%
\left( \ref{LBE}\right) $, where the components of the linearized collision
operator $\mathcal{L}=\left( \mathcal{L}_{1},...,\mathcal{L}_{s}\right) $
are given by%
\begin{eqnarray}
L_{\alpha }h &=&-\sum\limits_{\beta =1}^{s}M_{\alpha }^{-1/2}\left(
Q_{\alpha \beta }(M_{\alpha },M_{\beta }^{1/2}h_{\beta })+Q_{\alpha \beta
}(M_{\alpha }^{1/2}h_{\alpha },M_{\beta })\right)  \notag \\
&=&\sum\limits_{\beta =1}^{s}\int_{\left( \mathbb{R}^{3}\right) ^{3}}\left(
M_{\beta \ast }M_{\alpha }^{\prime }M_{\beta \ast }^{\prime }\right)
^{1/2}W_{\alpha \beta }(\boldsymbol{\xi },\boldsymbol{\xi }_{\ast
}\left\vert \boldsymbol{\xi }^{\prime },\boldsymbol{\xi }_{\ast }^{\prime
}\right. )\,  \notag \\
&&\times \left( \frac{h_{\alpha }}{M_{\alpha }^{1/2}}+\frac{h_{\beta \ast }}{%
M_{\beta \ast }^{1/2}}-\frac{h_{\alpha }^{\prime }}{\left( M_{\alpha
}^{\prime }\right) ^{1/2}}-\frac{h_{\beta \ast }^{\prime }}{\left( M_{\beta
\ast }^{\prime }\right) ^{1/2}}\right) \,d\boldsymbol{\xi }_{\ast }d%
\boldsymbol{\xi }^{\prime }d\boldsymbol{\xi }_{\ast }^{\prime }  \notag \\
&=&\upsilon _{\alpha }h_{\alpha }-K_{\alpha }\left( h\right) \text{, }
\label{dec5}
\end{eqnarray}%
where%
\begin{eqnarray}
\upsilon _{\alpha } &=&\sum\limits_{\beta =1}^{s}\int_{\left( \mathbb{R}%
^{3}\right) ^{3}}M_{\beta \ast }W_{\alpha \beta }(\boldsymbol{\xi },%
\boldsymbol{\xi }_{\ast }\left\vert \boldsymbol{\xi }^{\prime },\boldsymbol{%
\xi }_{\ast }^{\prime }\right. )\,d\boldsymbol{\xi }_{\ast }d\boldsymbol{\xi 
}^{\prime }d\boldsymbol{\xi }_{\ast }^{\prime },  \label{dec4} \\
K_{\alpha }\left( h\right) &=&\sum\limits_{\beta =1}^{s}\int_{\left( \mathbb{%
R}^{3}\right) ^{3}}\left( M_{\beta \ast }M_{\alpha }^{\prime }M_{\beta \ast
}^{\prime }\right) ^{1/2}W_{\alpha \beta }(\boldsymbol{\xi },\boldsymbol{\xi 
}_{\ast }\left\vert \boldsymbol{\xi }^{\prime },\boldsymbol{\xi }_{\ast
}^{\prime }\right. )\,  \notag \\
&&\times \left( \frac{h_{\alpha }^{\prime }}{\left( M_{\alpha }^{\prime
}\right) ^{1/2}}+\frac{h_{\beta \ast }^{\prime }}{\left( M_{\beta \ast
}^{\prime }\right) ^{1/2}}-\frac{h_{\beta \ast }}{M_{\beta \ast }^{1/2}}%
\right) \,d\boldsymbol{\xi }_{\ast }d\boldsymbol{\xi }^{\prime }d\boldsymbol{%
\xi }_{\ast }^{\prime }\text{,}  \notag
\end{eqnarray}%
while the components of the quadratic term $\Gamma =\left( \Gamma
_{1},...,\Gamma _{s}\right) $ are of the form $\left( \ref{nl1}\right) $.

The multiplication operator $\Lambda $ defined by 
\begin{equation*}
\Lambda (f)=\nu f\text{, where }\nu =\mathrm{diag}\left( \nu _{1},...,\nu
_{s}\right) ,
\end{equation*}%
is a closed, densely defined, self-adjoint operator on $\left( L^{2}\left( d%
\boldsymbol{\xi }\right) \right) ^{s}$. It is Fredholm, as well, if and only
if $\Lambda $ is coercive.

The following lemma follows immediately by Lemma $\ref{L0a}$.

\begin{lemma}
\label{L1a}For any $\left\{ \alpha ,\beta \right\} \subseteq \left\{
1,\ldots ,s\right\} $, the measure 
\begin{equation*}
d\widetilde{A}_{\alpha \beta }=\left( M_{\alpha }M_{\beta \ast }M_{\alpha
}^{\prime }M_{\beta \ast }^{\prime }\right) ^{1/2}dA_{\alpha \beta }
\end{equation*}%
is invariant under the (ordered) interchange $\left( \ref{tr1}\right) $ of
variables, while%
\begin{equation*}
d\widetilde{A}_{\alpha \beta }+d\widetilde{A}_{\beta \alpha }
\end{equation*}%
is invariant under the (ordered) interchange $\left( \ref{tr1a}\right) $ of
variables.
\end{lemma}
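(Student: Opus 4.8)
The plan is to deduce both claims from Lemma~\ref{L0a} by checking that the extra weight factor
$w_{\alpha \beta }:=\left( M_{\alpha }M_{\beta \ast }M_{\alpha }^{\prime }M_{\beta \ast }^{\prime }\right) ^{1/2}$
carries exactly the same permutation symmetries as the underlying measures $dA_{\alpha \beta }$. The starting observation is that each of the interchanges $\left( \ref{tr1}\right) $ and $\left( \ref{tr1a}\right) $ is merely a permutation of the four velocity variables, hence preserves the Lebesgue measure $d\boldsymbol{\xi }\,d\boldsymbol{\xi }_{\ast }\,d\boldsymbol{\xi }^{\prime }d\boldsymbol{\xi }_{\ast }^{\prime }$; therefore invariance of a measure is equivalent to invariance of its density. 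So I only need to track how the densities $w_{\alpha \beta }W_{\alpha \beta }$ transform under the two interchanges.

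For the interchange $\left( \ref{tr1}\right) $, $\left( \boldsymbol{\xi },\boldsymbol{\xi }_{\ast }\right) \leftrightarrow \left( \boldsymbol{\xi }^{\prime },\boldsymbol{\xi }_{\ast }^{\prime }\right) $, the first step is to note that $w_{\alpha \beta }$ is manifestly symmetric: the swap sends $M_{\alpha }\leftrightarrow M_{\alpha }^{\prime }$ and $M_{\beta \ast }\leftrightarrow M_{\beta \ast }^{\prime }$, leaving the product unchanged. Since Lemma~\ref{L0a} already gives invariance of $dA_{\alpha \beta }$ under $\left( \ref{tr1}\right) $, the product $d\widetilde{A}_{\alpha \beta }=w_{\alpha \beta }\,dA_{\alpha \beta }$ is invariant as well. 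No appeal to $\left( \ref{M2}\right) $ is required here; the symmetry is purely formal.

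The only genuinely delicate point, and the step that forces the symmetrised sum, is the interchange $\left( \ref{tr1a}\right) $, $\left( \boldsymbol{\xi },\boldsymbol{\xi }^{\prime }\right) \leftrightarrow \left( \boldsymbol{\xi }_{\ast },\boldsymbol{\xi }_{\ast }^{\prime }\right) $, since this trades the species-$\alpha $ slots for the species-$\beta $ slots. Using the explicit centred Gaussian form of $M_{\alpha }$, I would compute that this interchange turns $w_{\alpha \beta }$ into $w_{\beta \alpha }$: it sends $M_{\alpha }\mapsto M_{\alpha \ast }$, $M_{\beta \ast }\mapsto M_{\beta }$, and likewise $M_{\alpha }^{\prime }\mapsto M_{\alpha \ast }^{\prime }$, $M_{\beta \ast }^{\prime }\mapsto M_{\beta }^{\prime }$, so that the transformed weight is $\left( M_{\alpha \ast }M_{\beta }M_{\alpha \ast }^{\prime }M_{\beta }^{\prime }\right) ^{1/2}=w_{\beta \alpha }$. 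In the same way the first relation of $\left( \ref{rel3}\right) $ gives that $W_{\alpha \beta }$ is carried onto $W_{\beta \alpha }$. Consequently the density $w_{\alpha \beta }W_{\alpha \beta }$ of $d\widetilde{A}_{\alpha \beta }$ is mapped onto the density $w_{\beta \alpha }W_{\beta \alpha }$ of $d\widetilde{A}_{\beta \alpha }$, and vice versa, so $d\widetilde{A}_{\alpha \beta }+d\widetilde{A}_{\beta \alpha }$ is invariant even though in general neither summand is (the two species carry different masses). This mirrors precisely the structure of Lemma~\ref{L0a}.

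I expect the main obstacle to be nothing more than bookkeeping: keeping the species indices attached to the correct velocity slots when applying $\left( \ref{tr1a}\right) $, since it is tempting to conflate the transformed weight $w_{\alpha \beta }$ with $w_{\alpha \beta }$ itself rather than recognising it as $w_{\beta \alpha }$. Once that index tracking is done carefully, the lemma is a one-line consequence of Lemma~\ref{L0a}, and I would present it as such.
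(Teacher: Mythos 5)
Your proposal is correct and matches the paper's route: the paper simply notes that Lemma~\ref{L1a} ``follows immediately by Lemma~\ref{L0a}'', and your argument supplies exactly the intended bookkeeping — the weight $\left( M_{\alpha }M_{\beta \ast }M_{\alpha }^{\prime }M_{\beta \ast }^{\prime }\right) ^{1/2}$ is invariant under $\left( \ref{tr1}\right) $ and is carried onto $\left( M_{\beta }M_{\alpha \ast }M_{\beta }^{\prime }M_{\alpha \ast }^{\prime }\right) ^{1/2}$ under $\left( \ref{tr1a}\right) $, so the conclusions follow from the corresponding invariances of $dA_{\alpha \beta }$ and $dA_{\alpha \beta }+dA_{\beta \alpha }$. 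One trivial remark: the explicit Gaussian form of $M_{\alpha }$ is not needed anywhere — the transformation of the weight is pure relabelling of velocity slots, just as you treat the $\left( \ref{tr1}\right) $ case.
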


The weak form of the linearized collision operator $\mathcal{L}$ reads%
\begin{eqnarray*}
&&\left( \mathcal{L}h,g\right) \\
&=&\sum\limits_{\alpha ,\beta =1}^{s}\int_{\left( \mathbb{R}^{3}\right)
^{4}}\left( \frac{h_{\alpha }}{M_{\alpha }^{1/2}}+\frac{h_{\beta \ast }}{%
M_{\beta \ast }^{1/2}}-\frac{h_{\alpha }^{\prime }}{\left( M_{\alpha
}^{\prime }\right) ^{1/2}}-\frac{h_{\beta \ast }^{\prime }}{\left( M_{\beta
\ast }^{\prime }\right) ^{1/2}}\right) \frac{g_{\alpha }}{M_{\alpha }^{1/2}}%
\,d\widetilde{A}_{\alpha \beta } \\
&=&\sum\limits_{\alpha ,\beta =1}^{s}\int_{\left( \mathbb{R}^{3}\right)
^{4}}\left( \frac{h_{\alpha }}{M_{\alpha }^{1/2}}+\frac{h_{\beta \ast }}{%
M_{\beta \ast }^{1/2}}-\frac{h_{\alpha }^{\prime }}{\left( M_{\alpha
}^{\prime }\right) ^{1/2}}-\frac{h_{\beta \ast }^{\prime }}{\left( M_{\beta
\ast }^{\prime }\right) ^{1/2}}\right) \frac{g_{\beta \ast }}{M_{\beta \ast
}^{1/2}}\,d\widetilde{A}_{\alpha \beta } \\
&=&-\sum\limits_{\alpha ,\beta =1}^{s}\int_{\left( \mathbb{R}^{3}\right)
^{4}}\left( \frac{h_{\alpha }}{M_{\alpha }^{1/2}}+\frac{h_{\beta \ast }}{%
M_{\beta \ast }^{1/2}}-\frac{h_{\alpha }^{\prime }}{\left( M_{\alpha
}^{\prime }\right) ^{1/2}}-\frac{h_{\beta \ast }^{\prime }}{\left( M_{\beta
\ast }^{\prime }\right) ^{1/2}}\right) \frac{g_{\alpha }^{\prime }}{\left(
M_{\alpha }^{\prime }\right) ^{1/2}}\,d\widetilde{A}_{\alpha \beta } \\
&=&-\sum\limits_{\alpha ,\beta =1}^{s}\int_{\left( \mathbb{R}^{3}\right)
^{4}}\!\left( \!\frac{h_{\alpha }}{M_{\alpha }^{1/2}}+\frac{h_{\beta \ast }}{%
M_{\beta \ast }^{1/2}}-\frac{h_{\alpha }^{\prime }}{\left( M_{\alpha
}^{\prime }\right) ^{1/2}}-\frac{h_{\beta \ast }^{\prime }}{\left( M_{\beta
\ast }^{\prime }\right) ^{1/2}}\!\right) \!\frac{g_{\beta \ast }^{\prime }}{%
\left( M_{\beta \ast }^{\prime }\right) ^{1/2}}d\widetilde{A}_{\alpha \beta
}\!,
\end{eqnarray*}%
for any function $g=\left( g_{1},...,g_{s}\right) $, such that the first
integrals are defined for all $\left\{ \alpha ,\beta \right\} \subseteq
\left\{ 1,\ldots ,s\right\} $, while the following equalities are obtained
by applying Lemma $\ref{L1a}$.

We have the following lemma.

\begin{lemma}
\label{L2a}Let $g=\left( g_{1},...,g_{s}\right) $ be such that for any $%
\left\{ \alpha ,\beta \right\} \subseteq \left\{ 1,\ldots ,s\right\} $%
\begin{equation*}
\int_{\left( \mathbb{R}^{3}\right) ^{4}}\left( \frac{h_{\alpha }}{M_{\alpha
}^{1/2}}+\frac{h_{\beta \ast }}{M_{\beta \ast }^{1/2}}-\frac{h_{\alpha
}^{\prime }}{\left( M_{\alpha }^{\prime }\right) ^{1/2}}-\frac{h_{\beta \ast
}^{\prime }}{\left( M_{\beta \ast }^{\prime }\right) ^{1/2}}\right) \frac{%
g_{\alpha }}{M_{\alpha }^{1/2}}\,d\widetilde{A}_{\alpha \beta }
\end{equation*}%
is defined. Then%
\begin{eqnarray*}
\left( \mathcal{L}h,g\right) &=&\frac{1}{4}\sum\limits_{\alpha ,\beta
=1}^{s}\int_{\left( \mathbb{R}^{3}\right) ^{4}}\left( \frac{h_{\alpha }}{%
M_{\alpha }^{1/2}}+\frac{h_{\beta \ast }}{M_{\beta \ast }^{1/2}}-\frac{%
h_{\alpha }^{\prime }}{\left( M_{\alpha }^{\prime }\right) ^{1/2}}-\frac{%
h_{\beta \ast }^{\prime }}{\left( M_{\beta \ast }^{\prime }\right) ^{1/2}}%
\right) \\
&&\times \left( \frac{g_{\alpha }}{M_{\alpha }^{1/2}}+\frac{g_{\beta \ast }}{%
M_{\beta \ast }^{1/2}}-\frac{g_{\alpha }^{\prime }}{\left( M_{\alpha
}^{\prime }\right) ^{1/2}}-\frac{g_{\beta \ast }^{\prime }}{\left( M_{\beta
\ast }^{\prime }\right) ^{1/2}}\right) d\widetilde{A}_{\alpha \beta }.
\end{eqnarray*}
\end{lemma}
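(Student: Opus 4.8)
The plan is to prove the identity by symmetrization, exactly as in the single-species and polyatomic cases: average the four equivalent expressions for $\left(\mathcal{L}h,g\right)$ displayed immediately before the statement. Abbreviating the $h$-bracket by $\Phi_{\alpha\beta}=\dfrac{h_{\alpha}}{M_{\alpha}^{1/2}}+\dfrac{h_{\beta\ast}}{M_{\beta\ast}^{1/2}}-\dfrac{h_{\alpha}^{\prime}}{\left(M_{\alpha}^{\prime}\right)^{1/2}}-\dfrac{h_{\beta\ast}^{\prime}}{\left(M_{\beta\ast}^{\prime}\right)^{1/2}}$, those four forms read $\left(\mathcal{L}h,g\right)=\sum_{\alpha,\beta=1}^{s}\int\Phi_{\alpha\beta}\,X\,d\widetilde{A}_{\alpha\beta}$ with $X$ running over $\dfrac{g_{\alpha}}{M_{\alpha}^{1/2}}$, $\dfrac{g_{\beta\ast}}{M_{\beta\ast}^{1/2}}$, $-\dfrac{g_{\alpha}^{\prime}}{\left(M_{\alpha}^{\prime}\right)^{1/2}}$, and $-\dfrac{g_{\beta\ast}^{\prime}}{\left(M_{\beta\ast}^{\prime}\right)^{1/2}}$. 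Adding the four identities and dividing by $4$, the coefficient of $\Phi_{\alpha\beta}$ collapses to exactly the symmetric combination $\dfrac{g_{\alpha}}{M_{\alpha}^{1/2}}+\dfrac{g_{\beta\ast}}{M_{\beta\ast}^{1/2}}-\dfrac{g_{\alpha}^{\prime}}{\left(M_{\alpha}^{\prime}\right)^{1/2}}-\dfrac{g_{\beta\ast}^{\prime}}{\left(M_{\beta\ast}^{\prime}\right)^{1/2}}$, which is the asserted formula.

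The substance therefore lies in the four representations, which I would establish from Lemma~\ref{L1a}. The two forms carrying a primed $g$-factor (and the accompanying minus sign) come from the interchange $\left(\ref{tr1}\right)$: under it $\Phi_{\alpha\beta}$ is antisymmetric while $d\widetilde{A}_{\alpha\beta}$ is invariant and $g_{\alpha}/M_{\alpha}^{1/2}\mapsto g_{\alpha}^{\prime}/(M_{\alpha}^{\prime})^{1/2}$, so the sign change of $\Phi_{\alpha\beta}$ furnishes the minus. The two forms that turn an unstarred $g$-factor into a starred one are the mixture-specific point: here the individual measure $d\widetilde{A}_{\alpha\beta}$ is \emph{not} invariant under the particle-swap interchange $\left(\ref{tr1a}\right)$, so I would instead invoke the second assertion of Lemma~\ref{L1a}, that only the symmetrized measure $d\widetilde{A}_{\alpha\beta}+d\widetilde{A}_{\beta\alpha}$ is $\left(\ref{tr1a}\right)$-invariant, and apply it to the sum over both orderings of the species labels after the relabelling $\alpha\leftrightarrow\beta$.

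The main obstacle I anticipate is precisely this index bookkeeping. In contrast with the polyatomic single-species case (Lemma~\ref{L2}), where $d\widetilde{A}_{ij}^{kl}$ is separately invariant under all three interchanges, here the $\left(\ref{tr1a}\right)$-symmetry resides only on the sum over the two orderings of the species pair, so the pairing of $d\widetilde{A}_{\alpha\beta}$ with $d\widetilde{A}_{\beta\alpha}$ must be propagated consistently through each step and one must check that no uncancelled cross terms survive. The sole analytic hypothesis is the stated integrability of the first representation, which gives absolute convergence of all four integrals and so legitimizes both the change-of-variables identities from Lemma~\ref{L1a} and the concluding termwise addition; granting that, the rest is formal.
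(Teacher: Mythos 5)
Your proposal is correct and follows the paper's own route: the paper derives the same four representations of $\left( \mathcal{L}h,g\right) $ from Lemma \ref{L1a} and obtains the lemma by averaging them, exactly as you do. Your explicit handling of the interchange $\left( \ref{tr1a}\right) $ -- relabelling $\alpha \leftrightarrow \beta $ in the double sum so that the invariance of $d\widetilde{A}_{\alpha \beta }+d\widetilde{A}_{\beta \alpha }$ can be applied -- is precisely the bookkeeping the paper leaves implicit in the phrase ``obtained by applying Lemma \ref{L1a}''.
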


\begin{proposition}
The linearized collision operator is symmetric and nonnegative,%
\begin{equation*}
\left( \mathcal{L}h,g\right) =\left( h,\mathcal{L}g\right) \text{ and }%
\left( \mathcal{L}h,h\right) \geq 0\text{,}
\end{equation*}%
and\ the kernel of $\mathcal{L}$, $\ker \mathcal{L}$, is generated by%
\begin{equation*}
\left\{ \sqrt{M_{1}}e_{1},...,\sqrt{M_{s}}e_{s},\mathcal{M}^{1/2}\mathbf{m}%
v_{x},\mathcal{M}^{1/2}\mathbf{m}v_{y},\mathcal{M}^{1/2}\mathbf{m}v_{z},%
\mathcal{M}^{1/2}\mathbf{m}\left\vert \mathbf{v}\right\vert ^{2}\right\} ,
\end{equation*}%
where $\mathcal{M}=\mathrm{diag}\left( M_{1},...,M_{s}\right) $, $\mathbf{m}%
=\left( m_{1},...,m_{s}\right) $, and $\left\{ e_{1},...,e_{s}\right\} $ is
the standard basis of $\mathbb{R}^{s}$.
\end{proposition}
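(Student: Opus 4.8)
The plan is to follow, step by step, the proof already given for the corresponding proposition on polyatomic molecules, now invoking Lemma $\ref{L2a}$ and Proposition $\ref{P2a}$ in place of Lemma $\ref{L2}$ and Proposition $\ref{P2}$. Symmetry is immediate from the symmetrized weak form supplied by Lemma $\ref{L2a}$: there $h$ and $g$ enter through two copies of the same bracket $\dfrac{\cdot_{\alpha}}{M_{\alpha}^{1/2}}+\dfrac{\cdot_{\beta\ast}}{M_{\beta\ast}^{1/2}}-\dfrac{\cdot_{\alpha}^{\prime}}{\left( M_{\alpha}^{\prime}\right) ^{1/2}}-\dfrac{\cdot_{\beta\ast}^{\prime}}{\left( M_{\beta\ast}^{\prime}\right) ^{1/2}}$ integrated against the symmetric measure $d\widetilde{A}_{\alpha\beta}$, so interchanging $h$ and $g$ leaves the expression unchanged and $\left( \mathcal{L}h,g\right) =\left( h,\mathcal{L}g\right) $.

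Setting $g=h$ in Lemma $\ref{L2a}$ collapses the two brackets into a single square, yielding
\[
\left( \mathcal{L}h,h\right) =\frac{1}{4}\sum_{\alpha,\beta=1}^{s}\int_{\left( \mathbb{R}^{3}\right) ^{4}}\left( \frac{h_{\alpha}}{M_{\alpha}^{1/2}}+\frac{h_{\beta\ast}}{M_{\beta\ast}^{1/2}}-\frac{h_{\alpha}^{\prime}}{\left( M_{\alpha}^{\prime}\right) ^{1/2}}-\frac{h_{\beta\ast}^{\prime}}{\left( M_{\beta\ast}^{\prime}\right) ^{1/2}}\right) ^{2}d\widetilde{A}_{\alpha\beta}\geq 0 ,
\]
since the integrand is nonnegative and $d\widetilde{A}_{\alpha\beta}\geq 0$; this gives nonnegativity. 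For the kernel I would use that a symmetric nonnegative form vanishes on $h$ only if $\mathcal{L}h=0$ (via $\left\vert \left( \mathcal{L}h,g\right) \right\vert ^{2}\leq \left( \mathcal{L}h,h\right) \left( \mathcal{L}g,g\right) $ for all $g$), so that $h\in\ker\mathcal{L}$ exactly when the displayed quadratic form is zero.

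That quadratic form vanishes precisely when, for every $\left\{ \alpha,\beta\right\} \subseteq\left\{ 1,\ldots,s\right\} $, the bracket multiplied by $W_{\alpha\beta}$ is zero a.e. Writing $\phi:=\mathcal{M}^{-1/2}h$, the bracket is exactly $\phi_{\alpha}+\phi_{\beta\ast}-\phi_{\alpha}^{\prime}-\phi_{\beta\ast}^{\prime}$, so this condition says precisely that $\phi$ is a collision invariant in the sense of the definition preceding Proposition $\ref{P2a}$. By Proposition $\ref{P2a}$, $\phi$ then lies in the span of $\left\{ e_{1},\ldots,e_{s},\mathbf{m}\xi_{x},\mathbf{m}\xi_{y},\mathbf{m}\xi_{z},\mathbf{m}\left\vert \boldsymbol{\xi}\right\vert ^{2}\right\} $, and multiplying back by $\mathcal{M}^{1/2}$ produces exactly the asserted generating set, each member of which lies in $\mathfrak{h}^{(s)}$ because every $M_{\alpha}$ is Gaussian and therefore damps the polynomial factors.

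The step I expect to need the most care is the equivalence between the vanishing of the quadratic form and the collision-invariant condition on $\phi$. Two points must be checked: first, that vanishing of the integrand with respect to $d\widetilde{A}_{\alpha\beta}$ is equivalent to the a.e. vanishing of the bracket against $W_{\alpha\beta}$, which uses the strict positivity of the Maxwellian prefactor in $d\widetilde{A}_{\alpha\beta}=\left( M_{\alpha}M_{\beta\ast}M_{\alpha}^{\prime}M_{\beta\ast}^{\prime}\right) ^{1/2}dA_{\alpha\beta}$; and second, that the summation is over \emph{ordered} pairs $(\alpha,\beta)$, reflecting that in the mixture case full symmetry of the measure holds only for $d\widetilde{A}_{\alpha\beta}+d\widetilde{A}_{\beta\alpha}$ (Lemma $\ref{L1a}$) rather than for each summand separately. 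Once these are verified, the remainder is a direct transcription of the single-species argument.
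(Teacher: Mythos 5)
Your proposal is correct and follows essentially the same route as the paper's own proof: symmetry and nonnegativity read off from Lemma \ref{L2a}, the kernel characterized by vanishing of the quadratic form, and the identification of $\mathcal{M}^{-1/2}h$ as a collision invariant concluded via Proposition \ref{P2a}. The extra details you supply — the Cauchy--Schwarz argument for the equivalence $h\in\ker\mathcal{L}\Leftrightarrow\left(\mathcal{L}h,h\right)=0$, the strict positivity of the Maxwellian prefactor in $d\widetilde{A}_{\alpha\beta}$, and the remark that the sum runs over ordered pairs since only $d\widetilde{A}_{\alpha\beta}+d\widetilde{A}_{\beta\alpha}$ enjoys the full symmetry — are points the paper leaves implicit, and they are all correctly handled.
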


\begin{proof}
By Lemma $\ref{L2a}$, it is immediate that $\left( \mathcal{L}h,g\right)
=\left( h,\mathcal{L}g\right) $, and%
\begin{equation*}
\left( \mathcal{L}h,h\right) =\frac{1}{4}\sum\limits_{\alpha ,\beta
=1}^{s}\int_{\left( \mathbb{R}^{3}\right) ^{4}}\left( \frac{h_{\alpha }}{%
M_{\alpha }^{1/2}}+\frac{h_{\beta \ast }}{M_{\beta \ast }^{1/2}}-\frac{%
h_{\alpha }^{\prime }}{\left( M_{\alpha }^{\prime }\right) ^{1/2}}-\frac{%
h_{\beta \ast }^{\prime }}{\left( M_{\beta \ast }^{\prime }\right) ^{1/2}}%
\right) ^{2}d\widetilde{A}_{\alpha \beta }\geq 0.
\end{equation*}%
Furthermore, $h\in \ker \mathcal{L}$ if and only if $\left( \mathcal{L}%
h,h\right) =0$, which will be fulfilled\ if and only if for all $\left\{
\alpha ,\beta \right\} \subseteq \left\{ 1,\ldots ,s\right\} $%
\begin{equation*}
\left( \frac{h_{\alpha }}{M_{\alpha }^{1/2}}+\frac{h_{\beta \ast }}{M_{\beta
\ast }^{1/2}}-\frac{h_{\alpha }^{\prime }}{\left( M_{\alpha }^{\prime
}\right) ^{1/2}}-\frac{h_{\beta \ast }^{\prime }}{\left( M_{\beta \ast
}^{\prime }\right) ^{1/2}}\right) W_{\alpha \beta }(\boldsymbol{\xi },%
\boldsymbol{\xi }_{\ast }\left\vert \boldsymbol{\xi }^{\prime },\boldsymbol{%
\xi }_{\ast }^{\prime }\right. )=0\text{ a.e.,}
\end{equation*}%
i.e. if and only if $\mathcal{M}^{-1/2}h$ is a collision invariant. The last
part of the lemma now follows by Proposition $\ref{P2a}.$
\end{proof}

\begin{remark}
Note also that the quadratic term is orthogonal to the kernel of $\mathcal{L}
$, i.e. $\Gamma \left( h,h\right) \in \left( \ker \mathcal{L}\right) ^{\perp
_{\mathcal{\mathfrak{h}}^{(s)}}}$.
\end{remark}

\section{Main Results\label{S3}}

This section is devoted to the main results, concerning compact properties
in Theorems \ref{Thm1} and \ref{Thm3}, respectively, and bounds of collision
frequencies in Theorems \ref{Thm2} and \ref{Thm4}, respectively.

\subsection{Polyatomic molecules modeled by a discrete internal energy
variable\label{S3.1}}

Assume that for some positive number $\gamma $, such that $0<\gamma <1$,
there is a bound 
\begin{equation}
0\leq \sigma _{ij}^{kl}\left( \left\vert \mathbf{g}\right\vert ,\left\vert
\cos \theta \right\vert \right) \leq \frac{C}{\left\vert \mathbf{g}%
\right\vert ^{2}}\left( \Psi _{ij}^{kl}+\left( \Psi _{ij}^{kl}\right)
^{\gamma /2}\right) \text{, with }\Psi _{ij}^{kl}=\left\vert \mathbf{g}%
\right\vert \sqrt{\left\vert \mathbf{g}\right\vert ^{2}-\frac{4}{m}\Delta
I_{ij}^{kl}}\text{,}  \label{est1}
\end{equation}%
for $\left\vert \mathbf{g}\right\vert ^{2}>\frac{4}{m}\Delta I_{ij}^{kl}$ on
the scattering cross sections $\sigma _{ij}^{kl}$, $\left\{ i,j,k,l\right\}
\subseteq \left\{ 1,...,r\right\} \!$. Then the following result may be
obtained.

\begin{theorem}
\label{Thm1}Assume that the scattering cross sections $\sigma _{ij}^{kl}$, $%
\left\{ i,j,k,l\right\} \subseteq \left\{ 1,...,r\right\} $, satisfy the
bound $\left( \ref{est1}\right) $ for some positive number $\gamma $, such
that $0<\gamma <1$. Then the operator $K=\left( K_{1},...,K_{r}\right) $,
with the components $K_{i}$ given by $\left( \ref{dec1}\right) $ is a
self-adjoint compact operator on $\left( L^{2}\left( d\boldsymbol{\xi }%
\right) \right) ^{r}$.
\end{theorem}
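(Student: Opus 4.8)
The plan is to reduce each component $K_i$ to a finite sum of genuine integral operators on $L^{2}(d\boldsymbol{\xi})$ with explicitly computable kernels, and then to show that every such operator is either Hilbert--Schmidt or compact by the approximation argument of Lemma $\ref{LGD}$. First I would use the $\delta$-resolved form of the transition probability derived above, the one carrying $\delta_{3}(\mathbf{G}-\mathbf{G}')\,\delta_{1}(\sqrt{|\mathbf{g}|^{2}-\tfrac{4}{m}\Delta I_{ij}^{kl}}-|\mathbf{g}'|)$, to split $K_i$ according to the three terms $h_{k}'/(M_{k}')^{1/2}$, $h_{l\ast}'/(M_{l\ast}')^{1/2}$ and $h_{j\ast}/M_{j\ast}^{1/2}$ appearing in the bracket of $(\ref{dec1})$. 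Carrying out the $\boldsymbol{\xi}'$- and $\boldsymbol{\xi}_{\ast}'$-integrations against the two Dirac masses (integrating over $\mathbf{G}'$ and over $|\mathbf{g}'|$, leaving the direction $\boldsymbol{\omega}\in S^{2}$) collapses each summand to an operator acting on $h$ evaluated at a single velocity, with an explicit kernel.

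For the loss-type term, the one built from $h_{j\ast}$, the result is already an integral operator in $\boldsymbol{\xi}_{\ast}$, with kernel $k_{ij}^{(0)}(\boldsymbol{\xi},\boldsymbol{\xi}_{\ast})$. Here the four Maxwellian half-weights $(M_{i}M_{j\ast}M_{k}'M_{l\ast}')^{1/2}$ recombine, via conservation of momentum and total energy on the collision manifold, into a fixed Gaussian in $(\boldsymbol{\xi},\boldsymbol{\xi}_{\ast})$; together with the upper bound $(\ref{est1})$ on $\sigma_{ij}^{kl}$ this makes $\iint|k_{ij}^{(0)}|^{2}\,d\boldsymbol{\xi}\,d\boldsymbol{\xi}_{\ast}<\infty$, so this term is Hilbert--Schmidt. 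This is the routine part.

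The main work is with the two gain-type terms, built from $h_{k}'$ and $h_{l\ast}'$. For these I would change variables so that the gained velocity $\boldsymbol{\xi}'$ (respectively $\boldsymbol{\xi}_{\ast}'$) becomes the second kernel argument, integrating out the remaining velocity and the spherical variable. This is the step where the internal-energy shift enters, since the magnitude of the post-collisional relative velocity is $\sqrt{|\mathbf{g}|^{2}-\tfrac{4}{m}\Delta I_{ij}^{kl}}$ rather than $|\mathbf{g}|$, and the threshold $m|\mathbf{g}|^{2}>4\Delta I_{ij}^{kl}$ must be tracked through the Jacobian and the domain of integration. The resulting kernel $k_{ij}^{(1)}(\boldsymbol{\xi},\boldsymbol{\xi}')$ carries a diagonal singularity of the same $|\boldsymbol{\xi}-\boldsymbol{\xi}'|^{-1}$ type as in the monatomic case (now modulated by the energy shift), weighted by Gaussian-type decay, and so need not be Hilbert--Schmidt outright. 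I expect this to be the crux of the proof. The remedy, and the role of the two-part bound $(\ref{est1})$ with $0<\gamma<1$, is to exhibit $k_{ij}^{(1)}$ as an approximately Hilbert--Schmidt kernel in the sense of Lemma $\ref{LGD}$: truncating a neighborhood of the diagonal and of infinity produces, for each cut-off, a Hilbert--Schmidt operator, while the discarded remainder has operator norm tending to zero by a Schur-type estimate, that is, its $L^{1}$-norm in one variable, uniformly in the other, is small. The two-term structure is precisely what is needed to deliver simultaneously the Hilbert--Schmidt integrability of each truncation and the vanishing of the remainder's operator norm; the sub-linear exponent $\gamma/2$ supplies the extra decay that the linear term $\Psi_{ij}^{kl}$ alone does not.

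Finally, a finite sum of Hilbert--Schmidt and approximately Hilbert--Schmidt (hence compact) operators is compact, so each $K_{i}$, and therefore $K=(K_{1},\dots,K_{r})$, is compact on $(L^{2}(d\boldsymbol{\xi}))^{r}$. Self-adjointness follows because $K$ is bounded and symmetric: by Lemma $\ref{L2}$ the operator $\mathcal{L}$ is symmetric, and since $\upsilon$ is a real multiplication operator, the identity $(\mathcal{L}h,g)=(h,\mathcal{L}g)$ with $\mathcal{L}=\upsilon-K$ forces $(Kh,g)=(h,Kg)$ on a dense domain; being bounded, $K$ is then self-adjoint on all of $(L^{2}(d\boldsymbol{\xi}))^{r}$. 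Equivalently, one may read the symmetry off the matrix kernel directly from the relations $(\ref{rel1})$ for $W$ and the symmetry relations $(\ref{sr})$ for $\sigma_{ij}^{kl}$.
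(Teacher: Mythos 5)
Your proposal is correct and follows essentially the same route as the paper: the loss-type term is shown to be Hilbert--Schmidt after integrating out the Dirac masses and recombining the Maxwellian weights into a Gaussian, while the gain-type terms (which the paper merges into a single kernel $k_{ij2}$ with a factor $2$ via the symmetries $\left( \ref{rel2}\right) $, rather than treating $h_{k}^{\prime }$ and $h_{l\ast }^{\prime }$ separately as you do) are handled exactly as you describe, as approximately Hilbert--Schmidt kernels with a $1/\left\vert \mathbf{g}\right\vert $ diagonal singularity shifted by the internal-energy term $\chi _{ik}^{jl}=\Delta I_{ik}^{jl}/\left( m\left\vert \mathbf{g}\right\vert \right) $, checked against the three conditions of Lemma $\ref{LGD}$. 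Self-adjointness is obtained in the paper by your second suggested argument, reading the symmetry $k_{ij}(\boldsymbol{\xi },\boldsymbol{\xi }_{\ast })=k_{ji}(\boldsymbol{\xi }_{\ast },\boldsymbol{\xi })$ directly off the kernels from the relations for $W$.
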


Theorem \ref{Thm1} will be proven in Section $\ref{PT1}$.

\begin{corollary}
The linearized collision operator $\mathcal{L}$, with scattering cross
sections satisfying $\left( \ref{est1}\right) $, is a closed, densely
defined, self-adjoint operator on $\left( L^{2}\left( d\boldsymbol{\xi }%
\right) \right) ^{r}$.
\end{corollary}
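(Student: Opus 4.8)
The plan is to exploit the decomposition $\mathcal{L}=\Lambda-K$ furnished by $\left( \ref{dec2}\right) $, in which $\Lambda $ is the multiplication operator by the collision frequency $\upsilon =\mathrm{diag}\left( \upsilon _{1},...,\upsilon _{r}\right) $ and $K=\left( K_{1},...,K_{r}\right) $ is the integral operator with components given by $\left( \ref{dec1}\right) $. All the analytic content needed is already in place: it is recorded above that $\Lambda $ is a closed, densely defined, self-adjoint operator on $\left( L^{2}\left( d\boldsymbol{\xi }\right) \right) ^{r}$, and Theorem $\ref{Thm1}$ asserts that, under the bound $\left( \ref{est1}\right) $, the operator $K$ is self-adjoint and compact. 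In particular $K$ is bounded and everywhere defined. The corollary will therefore follow from standard perturbation theory for self-adjoint operators, with $D(\mathcal{L}):=D(\Lambda )$.

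Concretely, I would verify the three asserted properties in turn. Density is immediate, since $D(\mathcal{L})=D(\Lambda )$ and $\Lambda $ is densely defined. Closedness follows because the difference of the closed operator $\Lambda $ and the bounded operator $K$ is again a closed operator on the common domain $D(\Lambda )$. Finally, self-adjointness is the bounded, relative-bound-zero case of the Kato--Rellich theorem: adding the bounded self-adjoint operator $-K$ to the self-adjoint operator $\Lambda $ yields a self-adjoint operator on the unchanged domain $D(\Lambda )$; equivalently, one checks directly that $\left( \Lambda -K\right) ^{\ast }=\Lambda ^{\ast }-K^{\ast }=\Lambda -K$, the splitting of the adjoint being legitimate precisely because $K$ is bounded and self-adjoint.

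The point worth emphasizing is that this corollary carries essentially no independent difficulty: every hard estimate has been absorbed into Theorem $\ref{Thm1}$ (compactness, hence boundedness, of $K$) and into the self-adjointness of the collision frequency. What must not be overlooked is that one genuinely needs the boundedness of $K$ --- not merely its symmetry --- so that the domain, and with it closedness and self-adjointness, are inherited intact from $\Lambda $; this is exactly the feature of Theorem $\ref{Thm1}$ on which the corollary rests.
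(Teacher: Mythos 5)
Your proposal is correct and takes essentially the same route as the paper: the decomposition $\mathcal{L}=\Lambda -K$ from $\left( \ref{dec2}\right) $, $\left( \ref{dec1}\right) $, with $K$ compact (hence bounded) and self-adjoint by Theorem \ref{Thm1}, yielding closedness and dense definedness on $D(\mathcal{L})=D(\Lambda )$ and self-adjointness via stability under bounded self-adjoint perturbations (the paper cites Theorem 4.3 of Chapter V in \cite{Kato}). The only difference is cosmetic: you spell out the adjoint identity $\left( \Lambda -K\right) ^{\ast }=\Lambda ^{\ast }-K^{\ast }=\Lambda -K$, which the paper subsumes in the citation.
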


\begin{proof}
By Theorem \ref{Thm1}, the linear operator $\mathcal{L}=\Lambda -K$, where
\linebreak $\Lambda (f)=\nu f$ for $\nu =\mathrm{diag}\left( \nu
_{1},...,\nu _{s}\right) $, is closed as the sum of a closed and a bounded
operator, and densely defined, since the domains of the linear operators $%
\mathcal{L}$ and $\Lambda $ are equal; $D(\mathcal{L})=D(\Lambda )$.
Furthermore, it is a self-adjoint operator, since the set of self-adjoint
operators is closed under addition of bounded self-adjoint operators, see
Theorem 4.3 of Chapter V in \cite{Kato}.
\end{proof}

Now consider the scattering cross sections - cf. hard sphere models - 
\begin{equation}
\sigma _{ij}^{kl}=C\dfrac{\sqrt{\left\vert \mathbf{g}\right\vert ^{2}-\frac{4%
}{m}\Delta I_{ij}^{kl}}}{\left\vert \mathbf{g}\right\vert \varphi
_{i}\varphi _{j}}\text{ if }\left\vert \mathbf{g}\right\vert ^{2}>\frac{4}{m}%
\Delta I_{ij}^{kl}\text{,}  \label{e1}
\end{equation}%
for some positive constant $C>0$ and all $\left\{ i,j,k,l\right\} \subseteq
\left\{ 1,...,r\right\} $.

In fact, it would be enough with the bounds 
\begin{equation}
C_{-}\dfrac{\sqrt{\left\vert \mathbf{g}\right\vert ^{2}-\frac{4}{m}\Delta
I_{ij}^{kl}}}{\left\vert \mathbf{g}\right\vert \varphi _{i}\varphi _{j}}\leq
\sigma _{ij}^{kl}\leq C_{+}\dfrac{\sqrt{\left\vert \mathbf{g}\right\vert
^{2}-\frac{4}{m}\Delta I_{ij}^{kl}}}{\left\vert \mathbf{g}\right\vert
\varphi _{i}\varphi _{j}}\text{ if }\left\vert \mathbf{g}\right\vert ^{2}>%
\frac{4}{m}\Delta I_{ij}^{kl}\text{,}  \label{ie1}
\end{equation}%
for some positive constants $C_{\pm }>0$, on the scattering cross sections.

\begin{theorem}
\label{Thm2} The linearized collision operator $\mathcal{L}$, with
scattering cross sections $\left( \ref{e1}\right) $ (or $\left( \ref{ie1}%
\right) $), can be split into a positive multiplication operator $\Lambda $,
defined by $\Lambda f=\nu f$, where $\nu =\nu (\left\vert \boldsymbol{\xi }%
\right\vert )=\mathrm{diag}\left( \nu _{1},...,\nu _{r}\right) $, minus a
compact operator $K$ on $\left( L^{2}\left( d\boldsymbol{\xi }\right)
\right) ^{r}$%
\begin{equation}
\mathcal{L}=\Lambda -K,  \label{dec3}
\end{equation}%
where there exist positive numbers $\nu _{-}$ and $\nu _{+}$, $0<\nu
_{-}<\nu _{+}$, such that for any $i\in \left\{ 1,...,r\right\} $ 
\begin{equation}
\nu _{-}\left( 1+\left\vert \boldsymbol{\xi }\right\vert \right) \leq \nu
_{i}(\left\vert \boldsymbol{\xi }\right\vert )\leq \nu _{+}\left(
1+\left\vert \boldsymbol{\xi }\right\vert \right) \text{ for all }%
\boldsymbol{\xi }\in \mathbb{R}^{3}\text{.}  \label{ine1}
\end{equation}
\end{theorem}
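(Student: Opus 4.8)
The plan is to handle the two assertions separately: the splitting $\mathcal{L}=\Lambda-K$ with $K$ compact is essentially immediate from Theorem \ref{Thm1}, while the genuine work is the two-sided bound (\ref{ine1}) on the collision frequency. For the compactness, I would first check that the hard sphere cross sections (\ref{e1}) — and more generally any $\sigma_{ij}^{kl}$ obeying (\ref{ie1}) — fall under the hypothesis (\ref{est1}) of Theorem \ref{Thm1}. Since $\dfrac{\Psi_{ij}^{kl}}{\left\vert\mathbf{g}\right\vert^{2}}=\dfrac{\sqrt{\left\vert\mathbf{g}\right\vert^{2}-\frac{4}{m}\Delta I_{ij}^{kl}}}{\left\vert\mathbf{g}\right\vert}$, the choice (\ref{e1}) is exactly $\sigma_{ij}^{kl}=\dfrac{C}{\varphi_{i}\varphi_{j}}\dfrac{\Psi_{ij}^{kl}}{\left\vert\mathbf{g}\right\vert^{2}}$, which is dominated by the right-hand side of (\ref{est1}) for every $\gamma\in(0,1)$. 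Thus Theorem \ref{Thm1} applies and $K$ is a self-adjoint compact operator, while (\ref{dec2}) already provides the decomposition $\mathcal{L}=\Lambda-K$ with $\Lambda f=\nu f$ and $\nu_{i}=\upsilon_{i}$; positivity of $\Lambda$ will follow from the lower bound established below.

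To obtain (\ref{ine1}), I would reduce the expression (\ref{dec1}) for $\upsilon_{i}=\nu_{i}$ by carrying out the same $\{\boldsymbol{\xi}^{\prime},\boldsymbol{\xi}_{\ast}^{\prime}\}$-integration used in Section \ref{S2.1.1}, i.e. integrating out $\mathbf{G}^{\prime}$ and $\left\vert\mathbf{g}^{\prime}\right\vert$ against the two Dirac masses and substituting (\ref{e1}), which gives
\[
\nu_{i}=c_{i}\sum_{j,k,l=1}^{r}e^{-I_{j}}\int_{\mathbb{R}^{3}}e^{-m\left\vert\boldsymbol{\xi}_{\ast}\right\vert^{2}/2}\sqrt{\left\vert\mathbf{g}\right\vert^{2}-\frac{4}{m}\Delta I_{ij}^{kl}}\;\mathbf{1}_{m\left\vert\mathbf{g}\right\vert^{2}>4\Delta I_{ij}^{kl}}\,d\boldsymbol{\xi}_{\ast},\quad c_{i}=\frac{4\pi Cm^{3/2}}{(2\pi)^{3/2}\varphi_{i}}>0,
\]
with $\mathbf{g}=\boldsymbol{\xi}-\boldsymbol{\xi}_{\ast}$. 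Rotational invariance of the Gaussian weight makes this a function of $\left\vert\boldsymbol{\xi}\right\vert$ alone, as claimed in the statement. Because the internal energies $I_{1},\dots,I_{r}$ are finite in number, $\left\vert\Delta I_{ij}^{kl}\right\vert$ is bounded by a constant $D$, so on the support of the indicator $\sqrt{\left\vert\mathbf{g}\right\vert^{2}-\frac{4}{m}\Delta I_{ij}^{kl}}\le\left\vert\mathbf{g}\right\vert+\sqrt{4D/m}\le\left\vert\boldsymbol{\xi}\right\vert+\left\vert\boldsymbol{\xi}_{\ast}\right\vert+\sqrt{4D/m}$; integrating the Gaussian against this affine bound (its zeroth and first moments being finite) yields the upper estimate $\nu_{i}\le\nu_{+}(1+\left\vert\boldsymbol{\xi}\right\vert)$.

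The lower bound is the crux. Here I would discard all summands except the elastic channel $k=i$, $l=j$, for which $\Delta I_{ij}^{ij}=0$, so that the indicator holds for a.e. $\boldsymbol{\xi}_{\ast}$; since every term is nonnegative this only decreases the value, giving $\nu_{i}\ge c_{i}\sum_{j}e^{-I_{j}}\int_{\mathbb{R}^{3}}e^{-m\left\vert\boldsymbol{\xi}_{\ast}\right\vert^{2}/2}\left\vert\boldsymbol{\xi}-\boldsymbol{\xi}_{\ast}\right\vert\,d\boldsymbol{\xi}_{\ast}$. It then remains to prove the classical estimate $\int_{\mathbb{R}^{3}}e^{-m\left\vert\boldsymbol{\xi}_{\ast}\right\vert^{2}/2}\left\vert\boldsymbol{\xi}-\boldsymbol{\xi}_{\ast}\right\vert\,d\boldsymbol{\xi}_{\ast}\ge c(1+\left\vert\boldsymbol{\xi}\right\vert)$: the left-hand side is a strictly positive continuous function of $\boldsymbol{\xi}$ which, via $\left\vert\boldsymbol{\xi}-\boldsymbol{\xi}_{\ast}\right\vert\ge\left\vert\boldsymbol{\xi}\right\vert-\left\vert\boldsymbol{\xi}_{\ast}\right\vert$ and finiteness of the Gaussian moments, grows at least linearly as $\left\vert\boldsymbol{\xi}\right\vert\to\infty$, hence is bounded below by $c(1+\left\vert\boldsymbol{\xi}\right\vert)$. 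This gives $\nu_{i}\ge\nu_{-}(1+\left\vert\boldsymbol{\xi}\right\vert)$ with $\nu_{-}>0$, which in particular shows $\Lambda$ is positive. Taking the minimum and maximum of the resulting constants over the finite index set $i\in\{1,\dots,r\}$ produces uniform $\nu_{\pm}$ and completes (\ref{ine1}); the argument under (\ref{ie1}) is identical, with $C$ replaced by $C_{-}$ and $C_{+}$ in the two one-sided estimates. The main obstacle is precisely this lower bound, and within it the clean reduction to the elastic channel together with the Gaussian-weighted linear-growth estimate.
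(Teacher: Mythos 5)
Your proposal is correct and takes essentially the same route as the paper: you check that $\left( \ref{e1}\right)$ falls under the hypothesis $\left( \ref{est1}\right)$ so Theorem \ref{Thm1} gives the splitting, reduce $\nu _{i}$ by integrating out the Dirac masses to the same Gaussian integral, obtain the upper bound from $\sqrt{\left\vert \mathbf{g}\right\vert ^{2}-\frac{4}{m}\Delta I_{ij}^{kl}}\leq \left\vert \mathbf{g}\right\vert +C$, and obtain the lower bound by discarding all channels except one with $\Delta I_{ij}^{kl}\leq 0$ (your elastic choice $k=i$, $l=j$ is exactly the instance the paper has in mind). The only cosmetic difference is the last step of the lower bound, where you invoke positivity, continuity, and linear growth at infinity of $\int e^{-m\left\vert \boldsymbol{\xi }_{\ast }\right\vert ^{2}/2}\left\vert \boldsymbol{\xi }-\boldsymbol{\xi }_{\ast }\right\vert \,d\boldsymbol{\xi }_{\ast }$ rather than the paper's explicit case split $\left\vert \boldsymbol{\xi }\right\vert \leq 1$ versus $\left\vert \boldsymbol{\xi }\right\vert \geq 1$; the two arguments are equivalent.
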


The decomposition $\left( \ref{dec3}\right) $ follows by the decomposition $%
\left( \ref{dec2}\right) ,\left( \ref{dec1}\right) $ and Theorem \ref{Thm1},
while the bounds $\left( \ref{ine1}\right) $ are proven in Section $\ref{PT2}
$.

\begin{corollary}
The linearized collision operator $\mathcal{L}$, with scattering cross
sections $\left( \ref{e1}\right) $ (or $\left( \ref{ie1}\right) $), is a
Fredholm operator, with domain%
\begin{equation*}
D(\mathcal{L})=\left( L^{2}\left( \left( 1+\left\vert \boldsymbol{\xi }%
\right\vert \right) d\boldsymbol{\xi }\right) \right) ^{r}\text{.}
\end{equation*}
\end{corollary}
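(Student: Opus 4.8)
The plan is to read the Fredholm property directly off the decomposition $\mathcal{L}=\Lambda -K$ furnished by Theorem \ref{Thm2}, combining the coercivity of the collision frequency encoded in the bounds $\left( \ref{ine1}\right) $ with the compactness of $K$ from Theorem \ref{Thm1}, and then to identify the domain of $\mathcal{L}$ with that of the multiplication operator $\Lambda $. First I would show that $\Lambda $ is itself Fredholm. The lower bound in $\left( \ref{ine1}\right) $ gives $\nu _{i}(\left\vert \boldsymbol{\xi }\right\vert )\geq \nu _{-}\left( 1+\left\vert \boldsymbol{\xi }\right\vert \right) \geq \nu _{-}>0$ for every $i\in \left\{ 1,\ldots ,r\right\} $ and all $\boldsymbol{\xi }\in \mathbb{R}^{3}$, so $\Lambda $ is bounded below by the positive constant $\nu _{-}$, that is, coercive. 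By the observation recorded after the definition of $\Lambda $ in Section \ref{S2.1.3} — that the closed, densely defined, self-adjoint multiplication operator $\Lambda $ is Fredholm precisely when it is coercive — it follows that $\Lambda $ is Fredholm. Equivalently, multiplication by $1/\nu $ is everywhere defined and bounded (by $\nu _{-}^{-1}$), so $\Lambda :D(\Lambda )\rightarrow \left( L^{2}\left( d\boldsymbol{\xi }\right) \right) ^{r}$ is a bijection with bounded inverse, hence Fredholm of index zero.

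Next I would invoke the stability of the Fredholm property under compact perturbations. By Theorem \ref{Thm1}, $K$ is a compact — in particular bounded and everywhere defined — operator on $\left( L^{2}\left( d\boldsymbol{\xi }\right) \right) ^{r}$. Since the class of Fredholm operators is closed under addition of compact operators (see e.g. \cite{Kato}), the operator $\mathcal{L}=\Lambda -K$ is Fredholm, with the same index as $\Lambda $, namely zero. Because $K$ is bounded with domain all of $\left( L^{2}\left( d\boldsymbol{\xi }\right) \right) ^{r}$, the domain of $\mathcal{L}$ coincides with that of $\Lambda $; and $D(\Lambda )=\left\{ f:\nu f\in \left( L^{2}\left( d\boldsymbol{\xi }\right) \right) ^{r}\right\} $ is, by the two-sided bound $\left( \ref{ine1}\right) $ pinning the growth of $\nu $ to $1+\left\vert \boldsymbol{\xi }\right\vert $, exactly the weighted space asserted in the corollary.

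The essential content has already been carried out in Theorems \ref{Thm1} and \ref{Thm2}, so no deep obstacle remains; the corollary is really an assembly step. The only points needing care are (i) confirming that coercivity in the required sense, a uniform positive lower bound, genuinely follows from $\left( \ref{ine1}\right) $, which it does since $1+\left\vert \boldsymbol{\xi }\right\vert \geq 1$, and (ii) applying the compact-perturbation result in the correct form for an \emph{unbounded} Fredholm operator $\Lambda $ perturbed by the bounded compact operator $K$, so that indeed $D(\mathcal{L})=D(\Lambda )$ and the index is preserved. (One may also note that, $\mathcal{L}$ being self-adjoint by the preceding corollary, its index is forced to be zero in any case.)
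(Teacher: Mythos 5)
Your proposal is correct and takes essentially the same route as the paper: coercivity of $\Lambda$ from the lower bound $\left( \ref{ine1}\right)$ makes $\Lambda$ Fredholm, and stability of the Fredholm property under addition of the compact operator $K$ (Theorem \ref{Thm1}, via \cite{Kato}) yields the result for $\mathcal{L}=\Lambda -K$. Your added remarks on the domain identification $D(\mathcal{L})=D(\Lambda )=\left( L^{2}\left( \left( 1+\left\vert \boldsymbol{\xi }\right\vert \right) d\boldsymbol{\xi }\right) \right) ^{r}$ and the index being zero are sound elaborations of points the paper leaves implicit.
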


\begin{proof}
By Theorem \ref{Thm2} the multiplication operator $\Lambda $ is coercive
and, hence, a Fredholm operator. The set of Fredholm operators is closed
under addition of compact operators, see Theorem 5.26 of Chapter IV in \cite%
{Kato} and its proof, so, by Theorem \ref{Thm2}, $\mathcal{L}$ is a Fredholm
operator.
\end{proof}

\begin{corollary}
For the linearized collision operator $\mathcal{L}$, with scattering cross
sections $\left( \ref{e1}\right) $ (or $\left( \ref{ie1}\right) $), there
exists a positive number $\lambda $, $0<\lambda <1$, such that 
\begin{equation*}
\left( h,\mathcal{L}h\right) \geq \lambda \left( h,\nu (\left\vert 
\boldsymbol{\xi }\right\vert )h\right) \geq \lambda \nu _{-}\left( h,\left(
1+\left\vert \boldsymbol{\xi }\right\vert \right) h\right)
\end{equation*}%
for any $h\in \left( L^{2}(\left( 1+\left\vert \boldsymbol{\xi }\right\vert
\right) d\boldsymbol{\xi })\right) ^{r}\cap \mathrm{Im}\mathcal{L}$.
\end{corollary}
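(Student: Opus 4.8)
The second inequality is free: the lower bound in $\left(\ref{ine1}\right)$ gives $\nu_{i}(\left\vert \boldsymbol{\xi}\right\vert)\geq\nu_{-}\left(1+\left\vert \boldsymbol{\xi}\right\vert\right)$ for every $i$, hence $\left(h,\nu h\right)\geq\nu_{-}\left(h,\left(1+\left\vert \boldsymbol{\xi}\right\vert\right)h\right)$ termwise, and scaling by $\lambda>0$ yields it once the first inequality is known. So the entire content is the weighted coercivity (spectral gap) estimate $\left(h,\mathcal{L}h\right)\geq\lambda\left(h,\nu h\right)$ on $\mathrm{Im}\,\mathcal{L}$. The plan is to prove it by a weak-compactness argument by contradiction, exploiting the decomposition $\mathcal{L}=\Lambda-K$ with $\Lambda$ coercive and $K$ compact (Theorems $\ref{Thm1}$ and $\ref{Thm2}$), together with the fact that $\mathcal{L}$ is nonnegative and self-adjoint. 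I would first record two structural facts: since $\mathcal{L}$ is self-adjoint, $\mathrm{Im}\,\mathcal{L}\subseteq\left(\ker\mathcal{L}\right)^{\perp}$, so every admissible $h$ satisfies $h\perp\ker\mathcal{L}$; and $\ker\mathcal{L}$ is finite-dimensional, hence closed.

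Then I would set
\begin{equation*}
\lambda_{0}=\inf\left\{\left(h,\mathcal{L}h\right)\,:\,h\in\mathrm{Im}\,\mathcal{L}\cap D(\mathcal{L}),\ \left(h,\nu h\right)=1\right\}\geq 0
\end{equation*}
(nonnegative since $\mathcal{L}\geq 0$) and aim to show $\lambda_{0}>0$; then any $0<\lambda\leq\min(\lambda_{0},1)$ works. Supposing $\lambda_{0}=0$, I take a minimizing sequence $h_{n}$ with $\left(h_{n},\nu h_{n}\right)=1$, $h_{n}\perp\ker\mathcal{L}$, and $\left(h_{n},\mathcal{L}h_{n}\right)\to 0$. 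Because $\nu\geq\nu_{-}\left(1+\left\vert \boldsymbol{\xi}\right\vert\right)\geq\nu_{-}$, the sequence $h_{n}$ is bounded in $\mathfrak{h}^{(r)}$, and $\nu^{1/2}h_{n}$ is bounded in $\mathfrak{h}^{(r)}$ by the normalization. Passing to a subsequence, I obtain $h_{n}\rightharpoonup h$ and $\nu^{1/2}h_{n}\rightharpoonup \nu^{1/2}h$ weakly in $\mathfrak{h}^{(r)}$, the second limit being identified by testing against functions $g$ with $\nu^{1/2}g\in\mathfrak{h}^{(r)}$; in particular $\nu^{1/2}h\in\mathfrak{h}^{(r)}$, so $h\in D(\mathcal{L})$.

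Now I pass to the limit. Compactness of $K$ (Theorem $\ref{Thm1}$) gives $Kh_{n}\to Kh$ strongly, hence $\left(h_{n},Kh_{n}\right)\to\left(h,Kh\right)$; combined with $\left(h_{n},\mathcal{L}h_{n}\right)=1-\left(h_{n},Kh_{n}\right)\to 0$ this forces $\left(h,Kh\right)=1$. Weak lower semicontinuity of the weighted norm yields $\left(h,\nu h\right)\leq\liminf\left(h_{n},\nu h_{n}\right)=1$, so
\begin{equation*}
\left(h,\mathcal{L}h\right)=\left(h,\nu h\right)-\left(h,Kh\right)\leq 1-1=0.
\end{equation*}
Since $\mathcal{L}\geq 0$ this gives $\left(h,\mathcal{L}h\right)=0$, i.e. $h\in\ker\mathcal{L}$ by the characterization of $\ker\mathcal{L}$. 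But orthogonality to the closed subspace $\ker\mathcal{L}$ survives weak limits, so $h\perp\ker\mathcal{L}$ as well, whence $h=0$ and $\left(h,Kh\right)=0$, contradicting $\left(h,Kh\right)=1$. Therefore $\lambda_{0}>0$.

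I expect the main difficulty to lie not in the overall strategy but in the two weak-compactness bookkeeping points: verifying that $\nu^{1/2}h_{n}\rightharpoonup\nu^{1/2}h$ (which is what makes the weighted norm weakly lower semicontinuous, delivers $\left(h,\nu h\right)\leq 1$, and legitimizes $h\in D(\mathcal{L})$), and checking that the weak $\mathfrak{h}^{(r)}$-limit $h$ is admissible in the quadratic form $\left(h,\mathcal{L}h\right)$ and indeed lands in $\ker\mathcal{L}$. The compactness of $K$ from Theorem $\ref{Thm1}$ is the crucial input, turning the a priori indefinite term $\left(h_{n},Kh_{n}\right)$ into a convergent one, while the coercivity from Theorem $\ref{Thm2}$ supplies both the $\mathfrak{h}^{(r)}$-boundedness of the minimizing sequence and the inclusion $\mathrm{Im}\,\mathcal{L}\subseteq\left(\ker\mathcal{L}\right)^{\perp}$ used at the end.
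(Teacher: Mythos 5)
Your proof is correct, but it takes a genuinely different route from the paper's. The paper's own proof is a short convexity argument: from the Fredholm property established in the preceding corollary (closed range of the self-adjoint, nonnegative $\mathcal{L}$) it extracts an \emph{unweighted} spectral gap $(h,\mathcal{L}h)\geq \nu _{0}(h,h)$ on $\mathrm{Im}\,\mathcal{L}$, from compactness it gets the bound $(h,Kh)\leq c_{K}(h,h)$, and then it splits $(h,\mathcal{L}h)=(1-\lambda )(h,\mathcal{L}h)+\lambda (h,(\nu -K)h)$ with the explicit choice $\lambda =\nu _{0}/(\nu _{0}+c_{K})$, so the $(h,h)$-terms cancel exactly and the weighted estimate drops out in three lines. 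You instead prove the weighted gap directly by a variational contradiction argument: normalizing $(h_{n},\nu h_{n})=1$, using compactness of $K$ to turn $(h_{n},Kh_{n})$ into a convergent term, weak lower semicontinuity of the weighted norm via $\nu ^{1/2}h_{n}\rightharpoonup \nu ^{1/2}h$, and the characterization $(h,\mathcal{L}h)=0\Leftrightarrow h\in \ker \mathcal{L}$ together with the weak stability of orthogonality to $\ker \mathcal{L}$ to exclude a nontrivial limit. Your argument never invokes the Fredholm/closed-range corollary at all --- nonnegativity, the kernel characterization, compactness of $K$ and the bounds $(\ref{ine1})$ suffice --- so it is more self-contained and in effect re-proves the spectral gap; the price is that it is non-constructive (no explicit $\lambda $), longer, and requires the weak-limit bookkeeping you correctly flag and handle (identification of $\nu ^{1/2}h$ by testing, admissibility of $h$ in the quadratic form). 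One cosmetic fix: since the corollary asks for $0<\lambda <1$ and your $\lambda _{0}$ need not lie below $1$ (the operator $K$ is not sign-definite), choose for instance $\lambda =\min \left( \lambda _{0},\tfrac{1}{2}\right) $ rather than allowing $\lambda =\min \left( \lambda _{0},1\right) $.
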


\begin{proof}
Let $h\in \left( L^{2}(\left( 1+\left\vert \boldsymbol{\xi }\right\vert
\right) d\boldsymbol{\xi })\right) ^{r}\cap \left( \mathrm{ker}\mathcal{L}%
\right) ^{\perp }=\left( L^{2}(\left( 1+\left\vert \boldsymbol{\xi }%
\right\vert \right) d\boldsymbol{\xi })\right) ^{r}\cap \mathrm{Im}\mathcal{L%
}$. As a Fredholm operator, $\mathcal{L}$ is closed with a closed range, and
as a compact operator, $K$ is bounded, and so there are positive constants $%
\nu _{0}>0$ and $c_{K}>0$, such that%
\begin{equation*}
(h,Lh)\geq \nu _{0}(h,h)\text{ and }(h,Kh)\leq c_{K}(h,h).
\end{equation*}%
Let $\lambda =\dfrac{\nu _{0}}{\nu _{0}+c_{K}}$. Then%
\begin{eqnarray*}
(h,Lh) &=&(1-\lambda )(h,Lh)+\lambda (h,(\nu (|\boldsymbol{\xi }|)-K)h) \\
&\geq &(1-\lambda )\nu _{0}(h,h)+\lambda (h,\nu (|\boldsymbol{\xi }%
|)h)-\lambda c_{K}(h,h) \\
&=&(\nu _{0}-\lambda (\nu _{0}+c_{K}))(h,h)+\lambda (h,\nu (|\boldsymbol{\xi 
}|)h)=\lambda (h,\nu (|\boldsymbol{\xi }|)h)\text{.}
\end{eqnarray*}
\end{proof}

\subsection{Multicomponent mixtures\label{S3.2}}

Assume that for some positive number $\gamma $, such that $0<\gamma <1$,
there is a bound, cf. \cite{BGPS-13}, 
\begin{equation}
0\leq \sigma _{\alpha \beta }\left( \left\vert \mathbf{g}\right\vert ,\cos
\theta \right) \leq C\left( 1+\frac{1}{\left\vert \mathbf{g}\right\vert
^{2-\gamma }}\right) \text{, }0<\gamma <1\text{,}  \label{est2}
\end{equation}%
on the scattering cross sections $\sigma _{\alpha \beta }$, $\left\{ \alpha
,\beta \right\} \subseteq \left\{ 1,...,s\right\} $. Then the following
result may be obtained.

\begin{theorem}
\label{Thm3}Assume that the scattering cross sections $\sigma _{\alpha \beta
}$, $\left\{ \alpha ,\beta \right\} \subseteq \left\{ 1,...,s\right\} $,
satisfy the bound $\left( \ref{est2}\right) $ for some positive number $%
\gamma $, such that $0<\gamma <1$. Then the operator $K=\left(
K_{1},...,K_{s}\right) $, with the components $K_{\alpha }$ given by $\left( %
\ref{dec4}\right) $ is a self-adjoint compact operator on $\left(
L^{2}\left( d\boldsymbol{\xi }\right) \right) ^{s}$.
\end{theorem}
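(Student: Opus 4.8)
The plan is to follow the proof of Theorem~\ref{Thm1} for the single polyatomic species, adapting the Grad--Glassey decomposition to the mass-weighted collision geometry of a mixture. First I would pass from the probabilistic formulation to explicit integral operators. Starting from the reduced transition probabilities $\left( \ref{tp2}\right) $ and integrating out the Dirac deltas $\delta _{3}\left( \mathbf{G}_{\alpha \beta }-\mathbf{G}_{\alpha \beta }^{\prime }\right) $ and $\delta _{1}\left( \left\vert \mathbf{g}\right\vert -\left\vert \mathbf{g}^{\prime }\right\vert \right) $, the primed variables are eliminated; for the two terms of $K_{\alpha }$ in $\left( \ref{dec4}\right) $ carrying primed distribution functions one then performs the change of variables $\left( \boldsymbol{\xi }_{\ast },\boldsymbol{\omega }\right) \mapsto \boldsymbol{\xi }^{\prime }$, respectively $\left( \boldsymbol{\xi }_{\ast },\boldsymbol{\omega }\right) \mapsto \boldsymbol{\xi }_{\ast }^{\prime }$, while the term carrying $h_{\beta \ast }$ is already an integral over $\boldsymbol{\xi }_{\ast }$. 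This writes each component as a finite sum
\begin{equation*}
K_{\alpha }(h)=\sum_{\beta =1}^{s}\int_{\mathbb{R}^{3}}\left( k_{\alpha \beta }^{(1)}h_{\alpha \ast }+k_{\alpha \beta }^{(3)}h_{\beta \ast }-k_{\alpha \beta }^{(2)}h_{\beta \ast }\right) \,d\boldsymbol{\xi }_{\ast },
\end{equation*}
where all three terms now integrate over a common dummy variable $\boldsymbol{\xi }_{\ast }$, with kernels built from the square-root Maxwellian factors, the cross section $\sigma _{\alpha \beta }$ and the Jacobians of these changes of variables.

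Next I would treat the ``loss-type'' kernel $k_{\alpha \beta }^{(2)}$, which is the most transparent. Using the product relation $M_{\alpha }M_{\beta \ast }=M_{\alpha }^{\prime }M_{\beta \ast }^{\prime }$ (conservation of kinetic energy), the primed Maxwellian weight is $\boldsymbol{\omega }$-independent and
\begin{equation*}
k_{\alpha \beta }^{(2)}(\boldsymbol{\xi },\boldsymbol{\xi }_{\ast })=\left( M_{\alpha }M_{\beta \ast }\right) ^{1/2}\int_{S^{2}}\left\vert \mathbf{g}\right\vert \sigma _{\alpha \beta }\,d\boldsymbol{\omega }\leq C\left( M_{\alpha }M_{\beta \ast }\right) ^{1/2}\left( \left\vert \mathbf{g}\right\vert +\left\vert \mathbf{g}\right\vert ^{-(1-\gamma )}\right) ,
\end{equation*}
by $\left( \ref{est2}\right) $. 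The Gaussian factor $\left( M_{\alpha }M_{\beta \ast }\right) ^{1/2}$ gives decay in both $\boldsymbol{\xi }$ and $\boldsymbol{\xi }_{\ast }$, while near $\mathbf{g}=\boldsymbol{\xi }-\boldsymbol{\xi }_{\ast }=\mathbf{0}$ the exponent $2(1-\gamma )<3$ keeps $\left\vert \mathbf{g}\right\vert ^{-2(1-\gamma )}$ locally integrable; a direct estimate then gives $\int_{\mathbb{R}^{3}\times \mathbb{R}^{3}}\big| k_{\alpha \beta }^{(2)}\big| ^{2}\,d\boldsymbol{\xi }\,d\boldsymbol{\xi }_{\ast }<\infty $, so this piece is Hilbert--Schmidt, hence compact.

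The technical core is the ``gain'' kernels $k_{\alpha \beta }^{(1)},k_{\alpha \beta }^{(3)}$, where the change of variables $\left( \boldsymbol{\xi }_{\ast },\boldsymbol{\omega }\right) \mapsto \boldsymbol{\xi }^{\prime }$ produces a Jacobian with a diagonal $\left\vert \boldsymbol{\xi }-\boldsymbol{\xi }_{\ast }\right\vert ^{-1}$-type singularity, combined with the grazing singularity $\left\vert \mathbf{g}\right\vert ^{-(2-\gamma )}$ of $\sigma _{\alpha \beta }$. I would split each such kernel into a part that is manifestly square integrable, giving a Hilbert--Schmidt operator, and a remainder that is only \emph{approximately} Hilbert--Schmidt and is handled by the Grad--Glassey criterion (Lemma~\ref{LGD}): truncating the kernel to $\left\{ N^{-1}\leq \left\vert \mathbf{g}\right\vert \leq N,\ \left\vert \boldsymbol{\xi }\right\vert \leq N\right\} $ yields a Hilbert--Schmidt, hence compact, operator, while a Schur-type bound on the suprema $\sup _{\boldsymbol{\xi }}\int \big| k^{(i)}-k_{N}^{(i)}\big| \,d\boldsymbol{\xi }_{\ast }$ and $\sup _{\boldsymbol{\xi }_{\ast }}\int \big| k^{(i)}-k_{N}^{(i)}\big| \,d\boldsymbol{\xi }$ shows the truncations converge in operator norm. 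The assumption $0<\gamma <1$ in $\left( \ref{est2}\right) $ is precisely what keeps the combined singularity integrable and forces these suprema to vanish as $N\rightarrow \infty $; as operator-norm limits of compact operators, the remainders are compact.

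Finally, self-adjointness follows from symmetry of the kernels. The microreversibility relations $\left( \ref{rel3}\right) $, encoded in the invariances of $d\widetilde{A}_{\alpha \beta }$ recorded in Lemma~\ref{L1a}, translate into relations such as $k_{\alpha \beta }(\boldsymbol{\xi },\boldsymbol{\xi }_{\ast })=k_{\beta \alpha }(\boldsymbol{\xi }_{\ast },\boldsymbol{\xi })$ among the kernels, so each real integral operator is symmetric and the finite sum $K$ is self-adjoint; being a finite sum of compact operators, $K$ is compact. I expect the chief obstacle to be the gain kernels in the genuinely multicomponent regime: when $m_{\alpha }\neq m_{\beta }$ the centre-of-mass velocity $\mathbf{G}_{\alpha \beta }$ is mass-weighted, so the collision map lacks the symmetry of the single-species case and extracting the sharp singular behaviour of the kernels after the change of variables is delicate. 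This is exactly where the crucial lemma of \cite{BGPS-13}, re-proven in the appendix, enters, furnishing the integral inequality needed to bound the suprema above and control the truncation tails.
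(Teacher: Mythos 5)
Your proposal reproduces the architecture of the paper's proof: rewrite the kernels so that $\boldsymbol{\xi }_{\ast }$ is the sole argument of $h$, split $K_{\alpha }$ into a loss kernel, a same-species gain kernel and a cross-species gain kernel; prove the loss kernel Hilbert--Schmidt using $M_{\alpha }M_{\beta \ast }=M_{\alpha }^{\prime }M_{\beta \ast }^{\prime }$ on the support of $W_{\alpha \beta }$ together with $(\ref{est2})$; treat the same-species gain kernel with the Grad--Glassey criterion of Lemma \ref{LGD}; and deduce self-adjointness from the kernel symmetries $(\ref{sa3})$, $(\ref{sa4})$ coming from $(\ref{rel3})$. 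The one genuine divergence concerns the cross-species gain kernel (your $k_{\alpha \beta }^{(3)}$, the paper's $k_{\alpha \beta 2}^{\left( \beta \right) }$) when $m_{\alpha }\neq m_{\beta }$: you plan to push it through the approximate--Hilbert--Schmidt truncation machinery, with Lemma \ref{L3} ``bounding the suprema and the truncation tails,'' whereas in the paper Lemma \ref{L3} plays a different and more elementary role. It is a pointwise energy inequality: on the support of $W_{\alpha \beta }(\boldsymbol{\xi },\boldsymbol{\xi }^{\prime }\left\vert \boldsymbol{\xi }_{\ast }^{\prime },\boldsymbol{\xi }_{\ast }\right. )$ it yields $\left( M_{\beta }^{\prime }M_{\alpha \ast }^{\prime }\right) ^{1/2}\leq C\left( M_{\alpha }M_{\beta \ast }\right) ^{\rho /2}$, i.e. Gaussian decay in \emph{both} unprimed variables, whence the bound $(\ref{b6})$ shows this kernel is square integrable outright and $K_{\alpha \beta }^{(2)}$ is a genuine Hilbert--Schmidt operator --- no truncation is needed for it. Your route can be made to work, but only by first invoking Lemma \ref{L3} in exactly this pointwise way: the Maxwellian weights in this kernel sit entirely on primed variables, and energy conservation is the mixed relation $(\ref{vrel3})$, so there is no other source of decay in $\left\vert \boldsymbol{\xi }\right\vert $ with which to verify conditions (ii)--(iii) of Lemma \ref{LGD}; and once you have the pointwise bound, the Schur/truncation step is redundant. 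Two further points your sketch glosses over: for $m_{\alpha }\neq m_{\beta }$ the deltas in this term collapse onto the sphere $\mathbf{g}_{\alpha \beta }=\mathbf{g}_{\alpha \beta }^{\prime }$, $\left\vert \mathbf{g}\right\vert =\left\vert \mathbf{g}^{\prime }\right\vert $, not onto a plane, so no $\left\vert \mathbf{g}\right\vert ^{-1}$ Jacobian factor arises there (the $\left\vert \mathbf{g}\right\vert ^{-1}$ singularity you describe occurs only for the same-species gain kernel); and Lemma \ref{L3} degenerates when $m_{\alpha }=m_{\beta }$ (the explicit $\rho $ vanishes), in which case the cross-species gain kernel must be treated exactly like the same-species one, as the paper does --- your uniform Glassey treatment happens to cover this case, but you should note the change of geometry rather than rely on the lemma there.
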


Theorem \ref{Thm3} will be proven in Section $\ref{PT3}$, but an alternative
proof can be found in \cite{BGPS-13}. Essential ideas in the proof are
inspired by \cite{BGPS-13}, while the approach differs.

\begin{corollary}
The linearized collision operator $\mathcal{L}$, with scattering cross
sections satisfying $\left( \ref{est2}\right) $, is a closed, densely
defined, self-adjoint operator on $\left( L^{2}\left( d\boldsymbol{\xi }%
\right) \right) ^{s}$.
\end{corollary}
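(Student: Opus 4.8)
The plan is to mirror the proof of the corresponding corollary following Theorem \ref{Thm1}, since once Theorem \ref{Thm3} is available the structural ingredients are identical; only the underlying space changes from $\left( L^{2}\left( d\boldsymbol{\xi }\right) \right) ^{r}$ to $\left( L^{2}\left( d\boldsymbol{\xi }\right) \right) ^{s}$. I would start from the splitting $\mathcal{L} = \Lambda - K$ recorded in $\left( \ref{dec5}\right)$, where $\Lambda (f) = \nu f$ with $\nu = \mathrm{diag}\left( \nu _{1},...,\nu _{s}\right)$ and $K = \left( K_{1},...,K_{s}\right)$ is given by $\left( \ref{dec4}\right)$. It has already been observed that $\Lambda$ is a closed, densely defined, self-adjoint operator on $\left( L^{2}\left( d\boldsymbol{\xi }\right) \right) ^{s}$, while Theorem \ref{Thm3} asserts that $K$ is a self-adjoint compact operator on the same space; in particular $K$ is bounded and everywhere defined.

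From here the three claimed properties follow in turn. Closedness holds because the sum of a closed operator and a bounded operator is closed. Dense definedness holds because, $K$ being bounded with full domain, one has $D(\mathcal{L}) = D(\Lambda)$, which is dense. Self-adjointness follows from the stability of self-adjointness under bounded self-adjoint perturbations, i.e. Theorem 4.3 of Chapter V in \cite{Kato}, applied to the self-adjoint $\Lambda$ and the bounded self-adjoint $-K$.

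There is no genuine obstacle in this step: the entire difficulty has been discharged in Theorem \ref{Thm3}, which simultaneously supplies the boundedness (via compactness) and the self-adjointness of $K$ needed for the perturbation argument to apply without any domain complications. The only point deserving a remark is the everywhere-definedness of $K$, which is exactly what allows $D(\mathcal{L}) = D(\Lambda)$ and makes the Kato perturbation theorem directly applicable.
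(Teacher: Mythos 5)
Your proposal is correct and coincides with the paper's own argument: the paper proves the analogous corollary after Theorem \ref{Thm1} in exactly this way (closedness as the sum of a closed and a bounded operator, $D(\mathcal{L})=D(\Lambda)$ for dense definedness, and self-adjointness via Theorem 4.3 of Chapter V in \cite{Kato}), and the mixture case is the same argument with Theorem \ref{Thm3} and the decomposition $\left( \ref{dec5}\right)$, $\left( \ref{dec4}\right)$ in place of their polyatomic counterparts. Your remark that the everywhere-definedness of the compact $K$ is what makes the perturbation theorem apply without domain issues is exactly the right point.
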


Now consider a hard sphere model, i.e. such that 
\begin{equation}
\sigma _{\alpha \beta }=C_{\alpha \beta }  \label{e2}
\end{equation}%
for some positive constant $C_{\alpha \beta }>0$ for all $\left\{ \alpha
,\beta \right\} \subseteq \left\{ 1,...,s\right\} $.

In fact, it would be enough with the bounds 
\begin{equation}
C_{-}\leq \sigma _{\alpha \beta }\leq C_{+}  \label{ie2}
\end{equation}%
for some positive constants $C_{\pm }>0$ and all $\left\{ \alpha ,\beta
\right\} \subseteq \left\{ 1,...,s\right\} $, on the scattering cross
sections.

\begin{theorem}
\label{Thm4} \cite{BGPS-13} The linearized collision operator $\mathcal{L}$,
for a hard sphere model $\left( \ref{e2}\right) $ (or $\left( \ref{ie2}%
\right) $), can be split into a positive multiplication operator $\Lambda $,
where\linebreak\ $\Lambda f=\nu f$ for $\nu =\nu (\left\vert \boldsymbol{\xi 
}\right\vert )=\mathrm{diag}\left( \nu _{1},...,\nu _{s}\right) $, minus a
compact operator $K$ on $\left( L^{2}\left( d\boldsymbol{\xi }\right)
\right) ^{s}$%
\begin{equation}
\mathcal{L}=\Lambda -K,  \label{dec6}
\end{equation}%
where there exist positive numbers $\nu _{-}$ and $\nu _{+}$, $0<\nu
_{-}<\nu _{+}$, such that for all $\alpha \in \left\{ 1,...,s\right\} $%
\begin{equation}
\nu _{-}\left( 1+\left\vert \boldsymbol{\xi }\right\vert \right) \leq \nu
_{\alpha }(\left\vert \boldsymbol{\xi }\right\vert )\leq \nu _{+}\left(
1+\left\vert \boldsymbol{\xi }\right\vert \right) \text{ for all }%
\boldsymbol{\xi }\in \mathbb{R}^{3}\text{.}  \label{ine2}
\end{equation}
\end{theorem}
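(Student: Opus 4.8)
The plan is to read off the decomposition $(\ref{dec6})$ directly from the splitting $(\ref{dec5})$ of $\mathcal{L}_{\alpha}$ into $\upsilon_{\alpha}h_{\alpha}-K_{\alpha}(h)$, and to observe that the hard sphere cross sections $(\ref{e2})$ — and more generally any cross sections obeying $(\ref{ie2})$ — satisfy the bound $(\ref{est2})$, since a bounded $\sigma_{\alpha\beta}\leq C_{+}$ is trivially dominated by $C_{+}\bigl(1+|\mathbf{g}|^{-(2-\gamma)}\bigr)$ for any $0<\gamma<1$. Hence Theorem \ref{Thm3} applies and $K=(K_{1},\ldots,K_{s})$ is a self-adjoint compact operator on $\bigl(L^{2}(d\boldsymbol{\xi})\bigr)^{s}$. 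All the substance of the theorem therefore lies in establishing the two-sided bound $(\ref{ine2})$ on the collision frequency $\nu_{\alpha}$, to be carried out in Section \ref{PT4}.

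First I would rewrite $\nu_{\alpha}$ in a tractable form. Starting from $(\ref{dec4})$ and inserting the reduced transition probability $(\ref{tp2})$, the same change of variables $(\ref{df2})$ used for the collision operator collapses the two Dirac measures (setting $|\mathbf{g}^{\prime}|=|\mathbf{g}|$) and yields
\begin{equation*}
\nu_{\alpha}=\sum_{\beta=1}^{s}\int_{\mathbb{R}^{3}\times\mathbb{S}^{2}}|\mathbf{g}|\,\sigma_{\alpha\beta}\,M_{\beta\ast}\,d\boldsymbol{\xi}_{\ast}\,d\boldsymbol{\omega},\qquad \mathbf{g}=\boldsymbol{\xi}-\boldsymbol{\xi}_{\ast}.
\end{equation*}
Using $C_{-}\leq\sigma_{\alpha\beta}\leq C_{+}$ together with $\int_{\mathbb{S}^{2}}d\boldsymbol{\omega}=4\pi$, the angular integration contributes only a bounded factor, and it remains to estimate, from above and below, the scalar quantities $\phi_{\beta}(\boldsymbol{\xi}):=\int_{\mathbb{R}^{3}}|\boldsymbol{\xi}-\boldsymbol{\xi}_{\ast}|\,M_{\beta}(\boldsymbol{\xi}_{\ast})\,d\boldsymbol{\xi}_{\ast}$, where $M_{\beta}$ is the centered Gaussian $M_{\beta}=n_{\beta}(m_{\beta}/2\pi)^{3/2}e^{-m_{\beta}|\boldsymbol{\xi}_{\ast}|^{2}/2}$.

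The upper bound is routine: the triangle inequality $|\boldsymbol{\xi}-\boldsymbol{\xi}_{\ast}|\leq|\boldsymbol{\xi}|+|\boldsymbol{\xi}_{\ast}|$ gives $\phi_{\beta}(\boldsymbol{\xi})\leq n_{\beta}|\boldsymbol{\xi}|+\int_{\mathbb{R}^{3}}|\boldsymbol{\xi}_{\ast}|\,M_{\beta}\,d\boldsymbol{\xi}_{\ast}$, which is of the form $C(1+|\boldsymbol{\xi}|)$ since the last integral is a finite constant. For the lower bound — the delicate point — I would exploit that $M_{\beta}$ has zero mean. Convexity of the norm (Jensen's inequality applied to $\boldsymbol{\xi}-\boldsymbol{\xi}_{\ast}$ against the probability measure $n_{\beta}^{-1}M_{\beta}\,d\boldsymbol{\xi}_{\ast}$) gives
\begin{equation*}
\phi_{\beta}(\boldsymbol{\xi})\geq\left|\int_{\mathbb{R}^{3}}(\boldsymbol{\xi}-\boldsymbol{\xi}_{\ast})\,M_{\beta}(\boldsymbol{\xi}_{\ast})\,d\boldsymbol{\xi}_{\ast}\right|=n_{\beta}|\boldsymbol{\xi}|,
\end{equation*}
because $\int_{\mathbb{R}^{3}}\boldsymbol{\xi}_{\ast}M_{\beta}\,d\boldsymbol{\xi}_{\ast}=0$. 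This controls the large-$|\boldsymbol{\xi}|$ regime. Since $\phi_{\beta}$ is continuous and strictly positive everywhere (in particular $\phi_{\beta}(0)=\int_{\mathbb{R}^{3}}|\boldsymbol{\xi}_{\ast}|M_{\beta}>0$), it is bounded below by a positive constant on the unit ball; splitting into $|\boldsymbol{\xi}|\leq1$ and $|\boldsymbol{\xi}|\geq1$ and combining the constant and linear lower bounds yields $\phi_{\beta}(\boldsymbol{\xi})\geq c_{\beta}(1+|\boldsymbol{\xi}|)$ for some $c_{\beta}>0$.

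Summing over the finitely many $\beta$ and absorbing the constants $C_{\pm}$, $4\pi$, $n_{\beta}$ into final constants $\nu_{\pm}$, the two-sided estimate $(\ref{ine2})$ follows, with $\nu_{\alpha}=\nu_{\alpha}(|\boldsymbol{\xi}|)$ depending only on $|\boldsymbol{\xi}|$ by the rotational invariance of each $\phi_{\beta}$. I expect the main obstacle to be the lower bound; the zero-mean/Jensen argument is what makes the linear-in-$|\boldsymbol{\xi}|$ growth transparent and avoids a more laborious direct evaluation of the Gaussian integral.
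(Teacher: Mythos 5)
Your proposal is correct, and it reaches the theorem by a partly different route than the paper. The structural part coincides: like the paper, you obtain the splitting $(\ref{dec6})$ directly from $(\ref{dec5})$, $(\ref{dec4})$, note that $(\ref{e2})$ or $(\ref{ie2})$ trivially implies $(\ref{est2})$ so Theorem \ref{Thm3} gives compactness of $K$, collapse the Dirac measures via $(\ref{df2})$ to reduce $\nu_{\alpha}$ to $\sum_{\beta}\int_{\mathbb{R}^{3}\times\mathbb{S}^{2}}\left\vert \mathbf{g}\right\vert \sigma_{\alpha\beta}M_{\beta\ast}\,d\boldsymbol{\xi}_{\ast}d\boldsymbol{\omega}$, and prove the upper bound in $(\ref{ine2})$ by the triangle inequality — all exactly as in Section \ref{PT4}. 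The genuine difference is in the lower bound. The paper also splits into $\left\vert \boldsymbol{\xi }\right\vert \leq 1$ and $\left\vert \boldsymbol{\xi }\right\vert \geq 1$, but in each regime it works with $\left\vert \mathbf{g}\right\vert \geq \left\vert \left\vert \boldsymbol{\xi }\right\vert -\left\vert \boldsymbol{\xi }_{\ast }\right\vert \right\vert$ and truncates the integration domain (to $\left\vert \boldsymbol{\xi }_{\ast }\right\vert \geq 2$ when $\left\vert \boldsymbol{\xi }\right\vert \leq 1$, to $\left\vert \boldsymbol{\xi }_{\ast }\right\vert \leq 1/2$ when $\left\vert \boldsymbol{\xi }\right\vert \geq 1$), reducing everything to explicit Gaussian integrals with, in principle, computable constants. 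You instead get the linear growth in one line from Jensen's inequality applied to the convex function $\left\vert \cdot \right\vert$ against the probability measure $n_{\beta}^{-1}M_{\beta}\,d\boldsymbol{\xi}_{\ast}$, using that the centered Maxwellian has zero mean, so $\phi_{\beta}(\boldsymbol{\xi})\geq n_{\beta}\left\vert \boldsymbol{\xi }\right\vert$; the bounded regime is then dispatched by continuity and strict positivity of $\phi_{\beta}$ on the compact unit ball. Both arguments are sound (your Jensen step is legitimate since $\int M_{\beta}\,d\boldsymbol{\xi}_{\ast}=n_{\beta}$ and $\int \boldsymbol{\xi}_{\ast}M_{\beta}\,d\boldsymbol{\xi}_{\ast}=0$, and the finitely many species let you take uniform constants). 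Yours is shorter and makes the origin of the linear growth conceptually transparent — it is exactly the zero mean of the Maxwellian — at the modest cost of a non-constructive constant from the compactness step, where the paper's restricted-domain computation stays fully explicit.
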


The decomposition $\left( \ref{dec6}\right) $ follows by the decomposition $%
\left( \ref{dec5}\right) ,\left( \ref{dec4}\right) $ and Theorem \ref{Thm3},
while the bounds $\left( \ref{ine2}\right) $ are proven in Section $\ref{PT4}
$.

\begin{corollary}
The linearized collision operator $\mathcal{L}$, for a hard-sphere model $%
\left( \ref{e2}\right) $ (or $\left( \ref{ie2}\right) $), is a Fredholm
operator, with domain%
\begin{equation*}
D(\mathcal{L})=\left( L^{2}\left( \left( 1+\left\vert \boldsymbol{\xi }%
\right\vert \right) d\boldsymbol{\xi }\right) \right) ^{s}\text{.}
\end{equation*}
\end{corollary}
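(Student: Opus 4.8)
The plan is to follow the strategy already used for the corresponding corollary in the polyatomic setting (the one immediately after Theorem \ref{Thm2}), since the mixture case is entirely parallel; all the analytic work has in fact already been done in Theorems \ref{Thm3} and \ref{Thm4}, and what remains is a soft functional-analytic assembly. The statement will follow by combining the compactness of $K$ (Theorem \ref{Thm3}) with the splitting $\mathcal{L}=\Lambda-K$ and the two-sided bound $(\ref{ine2})$ on the collision frequency (Theorem \ref{Thm4}), together with the stability of the Fredholm property under compact perturbations.

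First I would invoke Theorem \ref{Thm4}: for a hard-sphere model $(\ref{e2})$ (or under the weaker bounds $(\ref{ie2})$) the operator splits as $\mathcal{L}=\Lambda-K$ on $\left(L^{2}(d\boldsymbol{\xi})\right)^{s}$, where $\Lambda f=\nu f$ with $\nu=\mathrm{diag}(\nu_{1},\dots,\nu_{s})$ obeying $(\ref{ine2})$, and $K$ is compact by Theorem \ref{Thm3}. The lower bound in $(\ref{ine2})$ gives $\nu_{\alpha}(|\boldsymbol{\xi}|)\geq\nu_{-}(1+|\boldsymbol{\xi}|)\geq\nu_{-}>0$ for every $\alpha$, so $\Lambda$ is a self-adjoint multiplication operator bounded below by the positive constant $\nu_{-}$; hence $\Lambda$ is coercive and boundedly invertible, and in particular it is a Fredholm operator.

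Next I would add back the compact part. Since $K$ is compact and the class of Fredholm operators is closed under addition of compact operators (Theorem 5.26 of Chapter IV in \cite{Kato} and its proof), $\mathcal{L}=\Lambda-K$ is again Fredholm. As $K$ is bounded, the perturbation $-K$ does not alter the domain, so $D(\mathcal{L})=D(\Lambda)$, and the bound $(\ref{ine2})$ identifies this domain with the weighted space $\left(L^{2}\left((1+|\boldsymbol{\xi}|)\,d\boldsymbol{\xi}\right)\right)^{s}$ asserted in the statement.

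I do not anticipate any genuine obstacle here: the substantive content is carried entirely by Theorems \ref{Thm3} and \ref{Thm4}. The only step deserving a line of care is the domain identification, where one must use that $\Lambda$ is coercive (so that $\nu$ is bounded away from zero) and that the growth prescribed by $(\ref{ine2})$ dictates precisely which $h$ lie in $D(\Lambda)$; this is exactly the information supplied by the bound $(\ref{ine2})$ from Theorem \ref{Thm4}.
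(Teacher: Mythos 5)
Your proposal is correct and follows essentially the same route as the paper: the paper proves the analogous polyatomic corollary by noting that the bounds $\left( \ref{ine1}\right) $ make $\Lambda $ coercive, hence Fredholm, and then invoking the stability of the Fredholm class under compact perturbations (Theorem 5.26 of Chapter IV in \cite{Kato}), leaving the mixture case to the identical argument via Theorems \ref{Thm3} and \ref{Thm4}, which is exactly what you assemble. Your additional remark that $D(\mathcal{L})=D(\Lambda )$ because the compact part $K$ is bounded, with the domain then read off from $\left( \ref{ine2}\right) $, is a correct and welcome explication of a point the paper states without comment.
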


\begin{corollary}
For the linearized collision operator $\mathcal{L}$, for a hard sphere model 
$\left( \ref{e2}\right) $ (or $\left( \ref{ie2}\right) $), there exists a
positive number $\lambda $, $0<\lambda <1$, such that \ 
\begin{equation*}
\left( h,\mathcal{L}h\right) \geq \lambda \left( h,\nu (\left\vert 
\boldsymbol{\xi }\right\vert )h\right) \geq \lambda \nu _{-}\left( h,\left(
1+\left\vert \boldsymbol{\xi }\right\vert \right) h\right)
\end{equation*}%
for any $h\in \left( L^{2}(\left( 1+\left\vert \boldsymbol{\xi }\right\vert
\right) d\boldsymbol{\xi })\right) ^{s}\cap \mathrm{Im}\mathcal{L}$.
\end{corollary}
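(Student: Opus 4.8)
The plan is to transplant the argument of the preceding (polyatomic) corollary essentially verbatim, with $s$ in place of $r$, since the only structural facts it uses are now all available in the mixture case.

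First I would reduce everything to two scalar estimates. By the corollary asserting that $\mathcal{L}$ is Fredholm, its range is closed; since $\mathcal{L}$ is also self-adjoint, $\mathrm{Im}\,\mathcal{L}=\left( \ker \mathcal{L}\right) ^{\perp }$, so the hypothesis $h\in \mathrm{Im}\,\mathcal{L}$ is exactly $h\perp \ker \mathcal{L}$. Together with the nonnegativity $\left( \mathcal{L}h,h\right) \geq 0$ proved earlier, closedness of the range forces $0$ to be an isolated point of the spectrum of $\mathcal{L}$, so the spectral theorem furnishes a gap $\nu _{0}>0$ with
\begin{equation*}
\left( h,\mathcal{L}h\right) \geq \nu _{0}\left( h,h\right) \qquad \text{for all }h\in \mathrm{Im}\,\mathcal{L}.
\end{equation*}
Separately, Theorem \ref{Thm3} gives that $K$ is compact, hence bounded, so $\left( h,Kh\right) \leq c_{K}\left( h,h\right)$ with $c_{K}=\left\Vert K\right\Vert$ for every $h$.

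Next I would set $\lambda =\dfrac{\nu _{0}}{\nu _{0}+c_{K}}\in \left( 0,1\right)$ and split
\begin{equation*}
\left( h,\mathcal{L}h\right) =\left( 1-\lambda \right) \left( h,\mathcal{L}h\right) +\lambda \left( h,\left( \Lambda -K\right) h\right) ,
\end{equation*}
using the spectral gap on the first summand and the bound on $K$ on the second; the choice of $\lambda$ is designed so that the resulting multiples of $\left( h,h\right)$ cancel, leaving $\left( h,\mathcal{L}h\right) \geq \lambda \left( h,\nu (\left\vert \boldsymbol{\xi }\right\vert )h\right)$, which is the first asserted inequality. The second inequality is then purely pointwise: the lower bound $\nu _{\alpha }(\left\vert \boldsymbol{\xi }\right\vert )\geq \nu _{-}\left( 1+\left\vert \boldsymbol{\xi }\right\vert \right)$ of Theorem \ref{Thm4}, applied in each component and summed over $\alpha$, gives $\left( h,\nu (\left\vert \boldsymbol{\xi }\right\vert )h\right) \geq \nu _{-}\left( h,\left( 1+\left\vert \boldsymbol{\xi }\right\vert \right) h\right)$, the restriction $h\in \left( L^{2}(\left( 1+\left\vert \boldsymbol{\xi }\right\vert \right) d\boldsymbol{\xi })\right) ^{s}$ ensuring the right-hand side is finite.

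The one substantive step is the spectral gap $\left( h,\mathcal{L}h\right) \geq \nu _{0}\left( h,h\right)$ on $\left( \ker \mathcal{L}\right) ^{\perp }$; the rest is elementary algebra. This is precisely the payoff of the Fredholm property established in the previous corollary: for a nonnegative self-adjoint operator, closedness of the range is equivalent to $0$ being an isolated point of the spectrum, whence the infimum of the nonzero spectral values is a strictly positive number $\nu _{0}$. I expect no further difficulty, and in particular the proof is word-for-word that of the polyatomic analogue, appealing only to self-adjointness, nonnegativity, the Fredholm property, and the decomposition $\mathcal{L}=\Lambda -K$ with $K$ bounded and $\Lambda$ bounded below by $\nu _{-}\left( 1+\left\vert \boldsymbol{\xi }\right\vert \right)$.
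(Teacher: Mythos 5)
Your proof is correct and follows essentially the same route as the paper, which proves the polyatomic analogue and tacitly reuses it here: the same gap constant $\nu _{0}$ obtained from the Fredholm (closed-range) property, the same bound $c_{K}=\left\Vert K\right\Vert $ from compactness, the same choice $\lambda =\nu _{0}/\left( \nu _{0}+c_{K}\right) $, and the same splitting $\left( h,\mathcal{L}h\right) =\left( 1-\lambda \right) \left( h,\mathcal{L}h\right) +\lambda \left( h,\left( \Lambda -K\right) h\right) $ followed by the pointwise lower bound on $\nu $ from Theorem \ref{Thm4}. Your spelling-out of why closed range plus self-adjointness and nonnegativity yields the spectral gap on $\mathrm{Im}\,\mathcal{L}=\left( \ker \mathcal{L}\right) ^{\perp }$ only makes explicit what the paper asserts without comment.
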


\section{Compactness and Bounds on the Collision Frequency\label{S4}}

This section concerns the proofs of the main results presented in Theorem %
\ref{Thm1}-\ref{Thm4}. Note that throughout this section $C$ will denote a
generic positive constant.

To show the compactness properties we will apply the following result.

Denote, for any (non-zero) natural number $N$,%
\begin{equation*}
\mathfrak{h}_{N}:=\left\{ (\boldsymbol{\xi },\boldsymbol{\xi }_{\ast })\in
\left( \mathbb{R}^{3}\right) ^{2}:\left\vert \boldsymbol{\xi }-\boldsymbol{%
\xi }_{\ast }\right\vert \geq \frac{1}{N}\text{; }\left\vert \boldsymbol{\xi 
}\right\vert \leq N\right\}
\end{equation*}%
and%
\begin{equation*}
b^{(N)}=b^{(N)}(\boldsymbol{\xi },\boldsymbol{\xi }_{\ast }):=b(\boldsymbol{%
\xi },\boldsymbol{\xi }_{\ast })\mathbf{1}_{\mathfrak{h}_{N}}\text{.}
\end{equation*}%
Then we have the following lemma from \cite{Glassey}, that will be of
practical use for us to obtain compactness in this section .

\begin{lemma}
\label{LGD} (Glassey \cite[Lemma 3.5.1]{Glassey}, Drange \cite{Dr-75})

Assume that $b(\boldsymbol{\xi },\boldsymbol{\xi }_{\ast })\geq 0$ and let $%
Tf\left( \boldsymbol{\xi }\right) =\int_{\mathbb{R}^{3}}b(\boldsymbol{\xi },%
\boldsymbol{\xi }_{\ast })f\left( \boldsymbol{\xi }_{\ast }\right) \,d%
\boldsymbol{\xi }_{\ast }$.

Then $T$ is compact on $L^{2}\left( d\boldsymbol{\xi \,}\right) $ if

(i) $\int_{\mathbb{R}^{3}}b(\boldsymbol{\xi },\boldsymbol{\xi }_{\ast })\,d%
\boldsymbol{\xi }$ is bounded in $\boldsymbol{\xi }_{\ast }$;

(ii) $b^{(N)}\in L^{2}\left( d\boldsymbol{\xi \,}d\boldsymbol{\xi }_{\ast
}\right) $ for any (non-zero) natural number $N$;

(iii) $\underset{\boldsymbol{\xi }\in \mathbb{R}^{3}}{\sup }\int_{\mathbb{R}%
^{3}}b(\boldsymbol{\xi },\boldsymbol{\xi }_{\ast })-b^{(N)}(\boldsymbol{\xi }%
,\boldsymbol{\xi }_{\ast })\,d\boldsymbol{\xi }_{\ast }\rightarrow 0$ as $%
N\rightarrow \infty $.
\end{lemma}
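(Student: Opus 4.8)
The plan is to realize $T$ as the operator-norm limit of a sequence of Hilbert--Schmidt operators; since Hilbert--Schmidt operators are compact and the compact operators are closed in the operator norm, compactness of $T$ follows at once. First I would introduce the truncated operator
\[
T_N f\left( \boldsymbol{\xi}\right) = \int_{\mathbb{R}^3} b^{(N)}(\boldsymbol{\xi}, \boldsymbol{\xi}_{\ast}) f\left( \boldsymbol{\xi}_{\ast}\right) \, d\boldsymbol{\xi}_{\ast}.
\]
By hypothesis (ii) the truncated kernel $b^{(N)}$ lies in $L^{2}\left( d\boldsymbol{\xi}\, d\boldsymbol{\xi}_{\ast}\right)$, so $T_N$ is a Hilbert--Schmidt operator, and in particular compact, on $L^{2}\left( d\boldsymbol{\xi}\right)$.

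Next I would estimate the operator norm of the remainder $T - T_N$, whose kernel is $r_N := b - b^{(N)} = b\left( 1 - \mathbf{1}_{\mathfrak{h}_N}\right) \geq 0$. The key tool is the Schur test for nonnegative kernels with constant weights: if $K \geq 0$ satisfies $\sup_{\boldsymbol{\xi}} \int K\, d\boldsymbol{\xi}_{\ast} \leq \alpha$ and $\sup_{\boldsymbol{\xi}_{\ast}} \int K\, d\boldsymbol{\xi} \leq \beta$, then the induced integral operator is bounded with norm at most $\sqrt{\alpha\beta}$. Applying this with $K = r_N$ and using $0 \leq r_N \leq b$, hypothesis (i) bounds the column integral uniformly in $N$,
\[
\sup_{\boldsymbol{\xi}_{\ast}} \int_{\mathbb{R}^3} r_N\, d\boldsymbol{\xi} \leq \sup_{\boldsymbol{\xi}_{\ast}} \int_{\mathbb{R}^3} b\, d\boldsymbol{\xi} =: \beta < \infty,
\]
while hypothesis (iii) sends the row integral $\alpha_N := \sup_{\boldsymbol{\xi}} \int_{\mathbb{R}^3} r_N\, d\boldsymbol{\xi}_{\ast}$ to $0$ as $N \to \infty$. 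Hence $\left\| T - T_N\right\| \leq \sqrt{\alpha_N \beta} \to 0$, so $T = \lim_N T_N$ in operator norm is compact.

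The one point demanding care is the bookkeeping of integration variables: hypotheses (i) and (iii) are asymmetric, controlling integration in $\boldsymbol{\xi}$ and in $\boldsymbol{\xi}_{\ast}$ respectively, and the whole argument hinges on feeding these into the two different factors of the Schur bound. Recognizing that the Schur test with weight $p = q = 1$ is precisely matched to this asymmetric pair is the crux; once it is in place, no a priori boundedness of $T$ need be assumed, since $T = T_N + \left( T - T_N\right)$ is bounded as a sum of bounded operators.
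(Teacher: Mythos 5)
Your proof is correct and is essentially the canonical argument for this lemma: the paper does not reproduce a proof but defers to Glassey \cite[Lemma 3.5.1]{Glassey}, where the same strategy is used — the truncation $T_{N}$ with kernel $b^{(N)}$ is Hilbert--Schmidt by (ii), and the nonnegative remainder kernel $b-b^{(N)}$ is handled by the Schur bound $\left\Vert T-T_{N}\right\Vert \leq \sqrt{\alpha _{N}\beta }$, with (iii) driving the row-integral factor $\alpha _{N}$ to zero and (i) keeping the column-integral factor $\beta$ uniformly bounded. Your observation that boundedness of $T$ need not be assumed a priori, since it follows from the decomposition $T=T_{N}+\left( T-T_{N}\right) $, is also sound.
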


Then we say that the kernel $b(\boldsymbol{\xi },\boldsymbol{\xi }_{\ast })$
is approximately Hilbert-Schmidt, while $T$ is an approximately
Hilbert-Schmidt operator. The reader is referred to Lemma 3.5.1 in \cite%
{Glassey} for a proof.

\subsection{Polyatomic molecules\label{S4.1}}

This section is devoted to the proofs of the compactness properties in
Theorem \ref{Thm1} and the bounds on the collision frequency in Theorem \ref%
{Thm2} of the linearized collision operator for polyatomic molecules modeled
with a discrete number of internal energies.

\subsubsection{\label{PT1} Compactness}

This section concerns the proof of Theorem \ref{Thm1}. Note that in the
proof the kernels are rewritten in such a way that $\boldsymbol{\xi }_{\ast
} $ - and not $\boldsymbol{\xi }^{\prime }$ and $\boldsymbol{\xi }_{\ast
}^{\prime }$ - always will be argument of the distribution functions. Then
there will be essentially two different types of kernels; either $%
\boldsymbol{\xi }_{\ast }$ is an argument in the loss term (as $\boldsymbol{%
\xi }$) or in the gain term (opposite to $\boldsymbol{\xi }$) of the
collision operator. The kernels of the terms from the loss part of the
collision operator will be shown\ to be Hilbert-Schmidt in a quite direct
way, while the kernels of the terms from the gain parts of the collision
operators will be shown to be approximately Hilbert-Schmidt in the sense of
Lemma \ref{LGD}.

\begin{proof}
For $i\in \left\{ 1,...,r\right\} $, rewrite expression $\left( \ref{dec1}%
\right) $ as 
\begin{eqnarray*}
K_{i}h &=&M_{i}^{-1/2}\sum\limits_{j,k,l=1}^{r}\int_{\left( \mathbb{R}%
^{3}\right) ^{3}}w(\boldsymbol{\xi },\boldsymbol{\xi }_{\ast
},I_{i},I_{j}\left\vert \boldsymbol{\xi }^{\prime },\boldsymbol{\xi }_{\ast
}^{\prime },I_{k},I_{l}\right. ) \\
&&\times \left( \frac{h_{k}^{\prime }}{\left( M_{k}^{\prime }\right) ^{1/2}}+%
\frac{h_{l\ast }^{\prime }}{\left( M_{l\ast }^{\prime }\right) ^{1/2}}-\frac{%
h_{j\ast }}{M_{j\ast }^{1/2}}\right) \,d\boldsymbol{\xi }_{\ast }d%
\boldsymbol{\xi }^{\prime }d\boldsymbol{\xi }_{\ast }^{\prime }\text{,}
\end{eqnarray*}%
with 
\begin{equation*}
w(\boldsymbol{\xi },\boldsymbol{\xi }_{\ast },I_{i},I_{j}\left\vert 
\boldsymbol{\xi }^{\prime },\boldsymbol{\xi }_{\ast }^{\prime
},I_{k},I_{l}\right. )=\left( \frac{M_{i}M_{j\ast }M_{k}^{\prime }M_{l\ast
}^{\prime }}{\varphi _{i}\varphi _{j}\varphi _{k}\varphi _{l}}\right)
^{1/2}W(\boldsymbol{\xi },\boldsymbol{\xi }_{\ast },I_{i},I_{j}\left\vert 
\boldsymbol{\xi }^{\prime },\boldsymbol{\xi }_{\ast }^{\prime
},I_{k},I_{l}\right. )\text{.}
\end{equation*}%
Due to relations $\left( \ref{rel1}\right) $, the relations%
\begin{eqnarray}
w(\boldsymbol{\xi },\boldsymbol{\xi }_{\ast },I_{i},I_{j}\left\vert 
\boldsymbol{\xi }^{\prime },\boldsymbol{\xi }_{\ast }^{\prime
},I_{k},I_{l}\right. ) &=&w(\boldsymbol{\xi }_{\ast },\boldsymbol{\xi }%
,I_{j},I_{i}\left\vert \boldsymbol{\xi }_{\ast }^{\prime },\boldsymbol{\xi }%
^{\prime },I_{l},I_{k}\right. )  \notag \\
w(\boldsymbol{\xi },\boldsymbol{\xi }_{\ast },I_{i},I_{j}\left\vert 
\boldsymbol{\xi }^{\prime },\boldsymbol{\xi }_{\ast }^{\prime
},I_{k},I_{l}\right. ) &=&w(\boldsymbol{\xi }^{\prime },\boldsymbol{\xi }%
_{\ast }^{\prime },I_{k},I_{l}\left\vert \boldsymbol{\xi },\boldsymbol{\xi }%
_{\ast },I_{i},I_{j}\right. )  \notag \\
w(\boldsymbol{\xi },\boldsymbol{\xi }_{\ast },I_{i},I_{j}\left\vert 
\boldsymbol{\xi }^{\prime },\boldsymbol{\xi }_{\ast }^{\prime
},I_{k},I_{l}\right. ) &=&w(\boldsymbol{\xi },\boldsymbol{\xi }_{\ast }%
\boldsymbol{,}I_{i},I_{j}\left\vert \boldsymbol{\xi }_{\ast }^{\prime },%
\boldsymbol{\xi }^{\prime },I_{l},I_{k}\right. )  \label{rel2}
\end{eqnarray}%
are satisfied. Hence, by first renaming $\left\{ \boldsymbol{\xi }_{\ast
},I_{j}\right\} \leftrightarrows \left\{ \boldsymbol{\xi }^{\prime
},I_{k}\right\} $ and then renaming $\left\{ \boldsymbol{\xi }_{\ast
},I_{j}\right\} \leftrightarrows \left\{ \boldsymbol{\xi }_{\ast }^{\prime
},I_{l}\right\} $, followed by applying the last relation in $\left( \ref%
{rel2}\right) $, 
\begin{eqnarray*}
&&\sum\limits_{j,k,l=1}^{r}\int_{\left( \mathbb{R}^{3}\right) ^{3}}w(%
\boldsymbol{\xi },\boldsymbol{\xi }_{\ast },I_{i},I_{j}\left\vert 
\boldsymbol{\xi }^{\prime },\boldsymbol{\xi }_{\ast }^{\prime
},I_{k},I_{l}\right. )\,\frac{h_{l\ast }^{\prime }}{\left( M_{l\ast
}^{\prime }\right) ^{1/2}}\,d\boldsymbol{\xi }_{\ast }d\boldsymbol{\xi }%
^{\prime }d\boldsymbol{\xi }_{\ast }^{\prime } \\
&=&\sum\limits_{j,k,l=1}^{r}\int_{\left( \mathbb{R}^{3}\right) ^{3}}w(%
\boldsymbol{\xi },\boldsymbol{\xi }^{\prime },I_{i},I_{k}\left\vert 
\boldsymbol{\xi }_{\ast },\boldsymbol{\xi }_{\ast }^{\prime
},I_{j},I_{l}\right. )\,\frac{h_{l\ast }^{\prime }}{\left( M_{l\ast
}^{\prime }\right) ^{1/2}}\,d\boldsymbol{\xi }_{\ast }d\boldsymbol{\xi }%
^{\prime }d\boldsymbol{\xi }_{\ast }^{\prime } \\
&=&\sum\limits_{j,k,l=1}^{r}\int_{\left( \mathbb{R}^{3}\right) ^{3}}w(%
\boldsymbol{\xi },\boldsymbol{\xi }^{\prime },I_{i},I_{k}\left\vert 
\boldsymbol{\xi }_{\ast }^{\prime },\boldsymbol{\xi }_{\ast
},I_{l},I_{j}\right. )\,\frac{h_{j\ast }}{M_{j\ast }^{1/2}}\,d\boldsymbol{%
\xi }_{\ast }d\boldsymbol{\xi }^{\prime }d\boldsymbol{\xi }_{\ast }^{\prime }
\\
&=&\sum\limits_{j,k,l=1}^{r}\int_{\left( \mathbb{R}^{3}\right) ^{3}}w(%
\boldsymbol{\xi },\boldsymbol{\xi }^{\prime },I_{i},I_{k}\left\vert 
\boldsymbol{\xi }_{\ast },\boldsymbol{\xi }_{\ast }^{\prime
},I_{j},I_{l}\right. )\,\frac{h_{j\ast }}{M_{j\ast }^{1/2}}\,d\boldsymbol{%
\xi }_{\ast }d\boldsymbol{\xi }^{\prime }d\boldsymbol{\xi }_{\ast }^{\prime }%
\text{.}
\end{eqnarray*}%
Moreover, by renaming $\left\{ \boldsymbol{\xi }_{\ast },I_{j}\right\}
\leftrightarrows \left\{ \boldsymbol{\xi }^{\prime },I_{k}\right\} $, 
\begin{eqnarray*}
&&\sum\limits_{j,k,l=1}^{r}\int_{\left( \mathbb{R}^{3}\right) ^{3}}w(%
\boldsymbol{\xi },\boldsymbol{\xi }_{\ast },I_{i},I_{j}\left\vert 
\boldsymbol{\xi }^{\prime },\boldsymbol{\xi }_{\ast }^{\prime
},I_{k},I_{l}\right. )\,\frac{h_{k}^{\prime }}{\left( M_{k}^{\prime }\right)
^{1/2}}\,d\boldsymbol{\xi }_{\ast }d\boldsymbol{\xi }^{\prime }d\boldsymbol{%
\xi }_{\ast }^{\prime } \\
&=&\sum\limits_{j,k,l=1}^{r}\int_{\left( \mathbb{R}^{3}\right) ^{3}}w(%
\boldsymbol{\xi },\boldsymbol{\xi }^{\prime },I_{i},I_{k}\left\vert 
\boldsymbol{\xi }_{\ast },\boldsymbol{\xi }_{\ast }^{\prime
},I_{j},I_{l}\right. )\,\frac{h_{j\ast }}{M_{j\ast }^{1/2}}\,d\boldsymbol{%
\xi }_{\ast }d\boldsymbol{\xi }^{\prime }d\boldsymbol{\xi }_{\ast }^{\prime }%
\text{.}
\end{eqnarray*}%
It follows that%
\begin{eqnarray}
K_{i}\left( h\right) &=&\sum\limits_{j=1}^{r}\int_{\mathbb{R}^{3}}k_{ij}(%
\boldsymbol{\xi },\boldsymbol{\xi }_{\ast })\,h_{j\ast }\,d\boldsymbol{\xi }%
_{\ast }\text{, where }  \notag \\
k_{ij}(\boldsymbol{\xi },\boldsymbol{\xi }_{\ast }) &=&k_{ij2}(\boldsymbol{%
\xi },\boldsymbol{\xi }_{\ast })-k_{ij1}(\boldsymbol{\xi },\boldsymbol{\xi }%
_{\ast })\text{, with}  \notag \\
k_{ij1}(\boldsymbol{\xi },\boldsymbol{\xi }_{\ast }) &=&\left( M_{i}M_{j\ast
}\right) ^{-1/2}\sum\limits_{k,l=1}^{r}\int_{\left( \mathbb{R}^{3}\right)
^{2}}w(\boldsymbol{\xi },\boldsymbol{\xi }_{\ast },I_{i},I_{j}\left\vert 
\boldsymbol{\xi }^{\prime },\boldsymbol{\xi }_{\ast }^{\prime
},I_{k},I_{l}\right. )\,d\boldsymbol{\xi }^{\prime }d\boldsymbol{\xi }_{\ast
}^{\prime }\text{ and}  \notag \\
k_{ij2}(\boldsymbol{\xi },\boldsymbol{\xi }_{\ast }) &=&2\left(
M_{i}M_{j\ast }\right) ^{-1/2}\sum\limits_{k,l=1}^{r}\int_{\left( \mathbb{R}%
^{3}\right) ^{2}}w(\boldsymbol{\xi },\boldsymbol{\xi }^{\prime
},I_{i},I_{k}\left\vert \boldsymbol{\xi }_{\ast },\boldsymbol{\xi }_{\ast
}^{\prime },I_{j},I_{l}\right. )\,d\boldsymbol{\xi }^{\prime }d\boldsymbol{%
\xi }_{\ast }^{\prime }\text{.}  \label{k1}
\end{eqnarray}%
Note that%
\begin{equation*}
k_{ij}(\boldsymbol{\xi },\boldsymbol{\xi }_{\ast })=k_{ji2}(\boldsymbol{\xi }%
_{\ast },\boldsymbol{\xi })-k_{ji1}(\boldsymbol{\xi }_{\ast },\boldsymbol{%
\xi })=k_{ji}(\boldsymbol{\xi }_{\ast },\boldsymbol{\xi }),
\end{equation*}%
since, by applying the first and the last relation in $\left( \ref{rel2}%
\right) $, 
\begin{eqnarray}
k_{ij1}(\boldsymbol{\xi },\boldsymbol{\xi }_{\ast }) &=&\left( M_{i}M_{j\ast
}\right) ^{-1/2}\sum\limits_{k,l=1}^{r}\int_{\left( \mathbb{R}^{3}\right)
^{2}}w(\boldsymbol{\xi },\boldsymbol{\xi }_{\ast },I_{i},I_{j}\left\vert 
\boldsymbol{\xi }^{\prime },\boldsymbol{\xi }_{\ast }^{\prime
},I_{k},I_{l}\right. )\,d\boldsymbol{\xi }^{\prime }d\boldsymbol{\xi }_{\ast
}^{\prime }  \notag \\
&=&\left( M_{i}M_{j\ast }\right) ^{-1/2}\sum\limits_{k,l=1}^{r}\int_{\left( 
\mathbb{R}^{3}\right) ^{2}}w(\boldsymbol{\xi }_{\ast },\boldsymbol{\xi ,}%
I_{j},I_{i}\left\vert \boldsymbol{\xi }_{\ast }^{\prime },\boldsymbol{\xi }%
^{\prime },I_{l},I_{k}\right. )\,d\boldsymbol{\xi }^{\prime }d\boldsymbol{%
\xi }_{\ast }^{\prime }  \notag \\
&=&\left( M_{i}M_{j\ast }\right) ^{-1/2}\sum\limits_{k,l=1}^{r}\int_{\left( 
\mathbb{R}^{3}\right) ^{2}}w(\boldsymbol{\xi }_{\ast },\boldsymbol{\xi ,}%
I_{j},I_{i}\left\vert \boldsymbol{\xi }^{\prime },\boldsymbol{\xi }_{\ast
}^{\prime },I_{k},I_{l}\right. )\,d\boldsymbol{\xi }^{\prime }d\boldsymbol{%
\xi }_{\ast }^{\prime }  \notag \\
&=&k_{ji1}(\boldsymbol{\xi }_{\ast },\boldsymbol{\xi })  \label{sa1}
\end{eqnarray}%
and, by applying the second relation in $\left( \ref{rel2}\right) $ and
renaming $\left\{ \boldsymbol{\xi }^{\prime },I_{k}\right\} \leftrightarrows
\left\{ \boldsymbol{\xi }_{\ast }^{\prime },I_{l}\right\} $, 
\begin{eqnarray}
k_{ij2}(\boldsymbol{\xi },\boldsymbol{\xi }_{\ast }) &=&2\left(
M_{i}M_{j\ast }\right) ^{-1/2}\sum\limits_{k,l=1}^{r}\int_{\left( \mathbb{R}%
^{3}\right) ^{2}}w(\boldsymbol{\xi },\boldsymbol{\xi }^{\prime
},I_{i},I_{k}\left\vert \boldsymbol{\xi }_{\ast },\boldsymbol{\xi }_{\ast
}^{\prime },I_{j},I_{l}\right. )\,d\boldsymbol{\xi }^{\prime }d\boldsymbol{%
\xi }_{\ast }^{\prime }  \notag \\
&=&2\left( M_{i}M_{j\ast }\right) ^{-1/2}\sum\limits_{k,l=1}^{r}\int_{\left( 
\mathbb{R}^{3}\right) ^{2}}w(\boldsymbol{\xi }_{\ast },\boldsymbol{\xi }%
_{\ast }^{\prime },I_{j},I_{l}\left\vert \boldsymbol{\xi },\boldsymbol{\xi }%
^{\prime },I_{i},I_{k}\right. )\,d\boldsymbol{\xi }^{\prime }d\boldsymbol{%
\xi }_{\ast }^{\prime }  \notag \\
&=&2\left( M_{i}M_{j\ast }\right) ^{-1/2}\sum\limits_{k,l=1}^{r}\int_{\left( 
\mathbb{R}^{3}\right) ^{2}}w(\boldsymbol{\xi }_{\ast },\boldsymbol{\xi }%
^{\prime },I_{j},I_{k}\left\vert \boldsymbol{\xi },\boldsymbol{\xi }_{\ast
}^{\prime },I_{i},I_{l}\right. )\,d\boldsymbol{\xi }^{\prime }d\boldsymbol{%
\xi }_{\ast }^{\prime }  \notag \\
&=&k_{ji2}(\boldsymbol{\xi }_{\ast },\boldsymbol{\xi })\text{.}  \label{sa2}
\end{eqnarray}

\begin{figure}[h]
\centering
\includegraphics[width=0.6\textwidth]{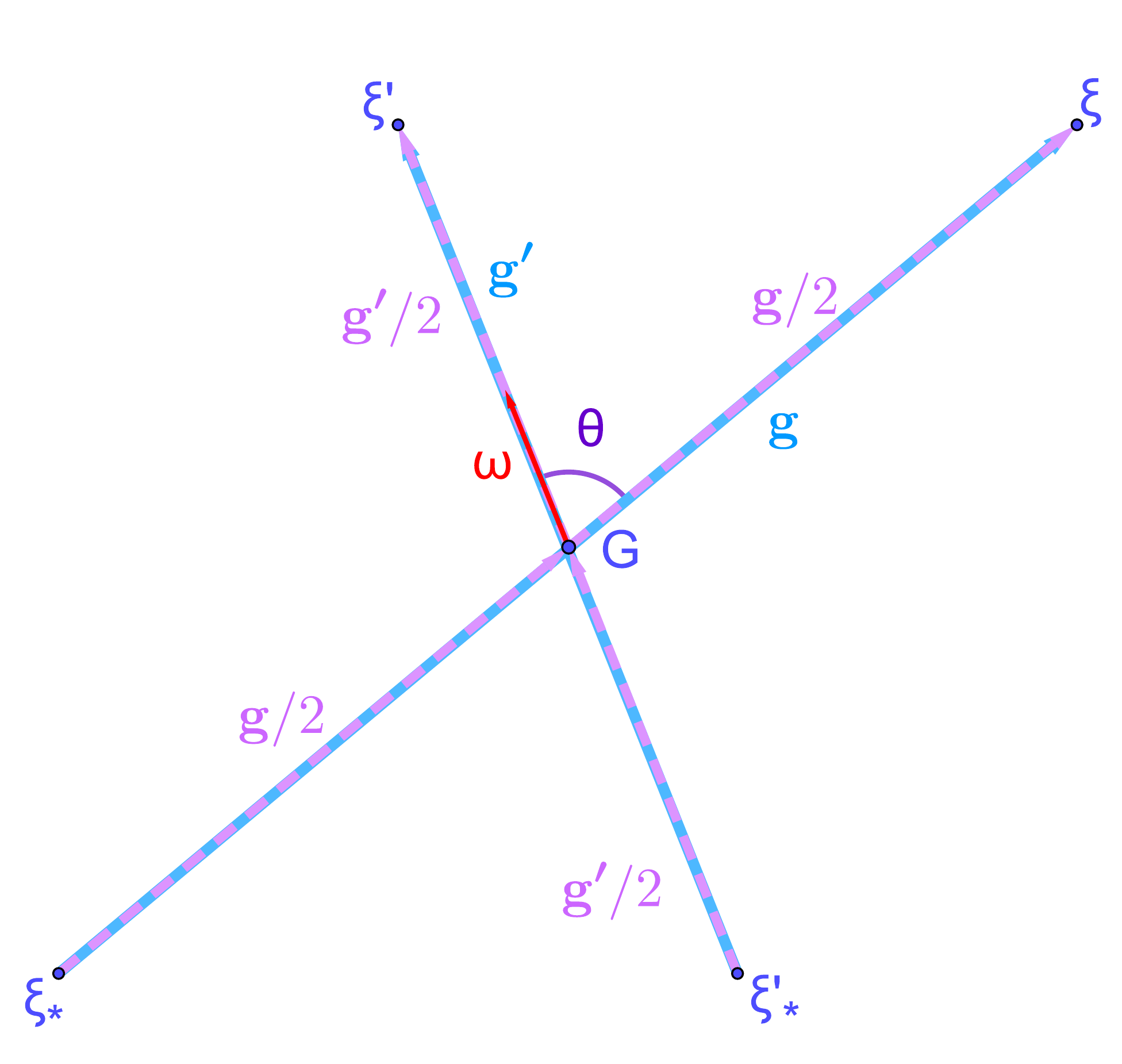}
\caption{Typical collision of $K_{ij}^{(1)}$. Classical representation of an
inelastic collision.}
\label{fig1}
\end{figure}

We now continue by proving the compactness for the two different types of
collision kernel separately. \ 

\textbf{I. Compactness of }$K_{ij}^{\left( 1\right) }=\int_{\mathbb{R}%
^{3}}k_{ij1}(\boldsymbol{\xi },\boldsymbol{\xi }_{\ast })\,h_{j\ast }\,d%
\boldsymbol{\xi }_{\ast }$.

By a change of variables $\left\{ \boldsymbol{\xi }^{\prime },\boldsymbol{%
\xi }_{\ast }^{\prime }\right\} \rightarrow \left\{ \mathbf{g}^{\prime }=%
\boldsymbol{\xi }^{\prime }-\boldsymbol{\xi }_{\ast }^{\prime },\mathbf{G}%
^{\prime }=\dfrac{\boldsymbol{\xi }^{\prime }+\boldsymbol{\xi }_{\ast
}^{\prime }}{2}\right\} $, cf. Figure $\ref{fig1}$, noting that $\left( \ref%
{df1}\right) $, and using relation $\left( \ref{M1}\right) $, expression $%
\left( \ref{k1}\right) $ of $k_{ij1}$ may be transformed to the following
form 
\begin{eqnarray*}
&&k_{ij1}(\boldsymbol{\xi },\boldsymbol{\xi }_{\ast }) \\
&=&\sum\limits_{k,l=1}^{r}\int\limits_{\mathbb{R}^{3}\times \mathbb{R}%
_{+}\times \mathbb{S}^{2}}\!\!\!\!\left( \frac{M_{k}^{\prime }M_{l\ast
}^{\prime }}{\varphi _{i}\varphi _{j}\varphi _{k}\varphi _{l}}\right)
^{1/2}\!\!\left\vert \mathbf{g}^{\prime }\right\vert ^{2}W(\boldsymbol{\xi },%
\boldsymbol{\xi }_{\ast },I_{i},I_{j}\left\vert \boldsymbol{\xi }^{\prime },%
\boldsymbol{\xi }_{\ast }^{\prime },I_{k},I_{l}\right. )d\mathbf{G}^{\prime
}d\left\vert \mathbf{g}^{\prime }\right\vert d\boldsymbol{\omega } \\
&=&\left( M_{i}M_{j\ast }\right) ^{1/2}\left\vert \mathbf{g}\right\vert
\sum\limits_{k,l=1}^{r}\int_{\mathbb{S}^{2}}\sigma _{ij}^{kl}\left(
\left\vert \mathbf{g}\right\vert ,\cos \theta \right) \mathbf{1}%
_{m\left\vert \mathbf{g}\right\vert >4\Delta I_{ij}^{kl}}\,d\boldsymbol{%
\omega }\text{.}
\end{eqnarray*}%
By assumption $\left( \ref{est1}\right) $ and 
\begin{equation*}
m\frac{\left\vert \boldsymbol{\xi }\right\vert ^{2}}{2}+m\frac{\left\vert 
\boldsymbol{\xi }_{\ast }\right\vert ^{2}}{2}+I_{i}+I_{j}=m\left\vert 
\mathbf{G}\right\vert ^{2}+m\frac{\left\vert \mathbf{g}\right\vert ^{2}}{4}%
+I_{i}+I_{j}=m\left\vert \mathbf{G}\right\vert ^{2}+E_{ij}\text{,}
\end{equation*}%
the bound%
\begin{eqnarray}
&&k_{ij1}^{2}(\boldsymbol{\xi },\boldsymbol{\xi }_{\ast })  \notag \\
&\leq &\frac{C}{\left\vert \mathbf{g}\right\vert ^{2}}M_{i}M_{j\ast }\left(
\int_{\mathbb{S}^{2}}\,d\boldsymbol{\omega }\right) ^{2}\left(
\sum\limits_{k,l=1}^{r}\left( \Psi _{ij}^{kl}+\left( \Psi _{ij}^{kl}\right)
^{\gamma /2}\right) \mathbf{1}_{m\left\vert \mathbf{g}\right\vert >4\Delta
I_{ij}^{kl}}\right) ^{2}\text{\textbf{\ }}  \notag \\
&\leq &\frac{C}{\left\vert \mathbf{g}\right\vert ^{2}}e^{-m\left\vert 
\mathbf{G}\right\vert ^{2}-E_{ij}}\!\!\left( \sum\limits_{k,l=1}^{r}\left(
1+\left\vert \mathbf{g}\right\vert ^{2}\right) \!\!\right) ^{2}\!\!=C\frac{%
\left( 1+\left\vert \mathbf{g}\right\vert ^{2}\right) ^{2}}{\left\vert 
\mathbf{g}\right\vert ^{2}}e^{-m\left\vert \mathbf{G}\right\vert
^{2}-E_{ij}}\!\text{.}  \label{b1}
\end{eqnarray}%
may be obtained. Then, by applying the bound $\left( \ref{b1}\right) $ and
first changing variables of integration $\left\{ \boldsymbol{\xi },%
\boldsymbol{\xi }_{\ast }\right\} \rightarrow \left\{ \mathbf{g},\mathbf{G}%
\right\} $, with unitary Jacobian, and then to spherical coordinates,%
\begin{eqnarray*}
\int_{\left( \mathbb{R}^{3}\right) ^{2}}k_{ij1}^{2}(\boldsymbol{\xi },%
\boldsymbol{\xi }_{\ast })d\boldsymbol{\xi }d\boldsymbol{\xi }_{\ast } &\leq
&C\int_{\left( \mathbb{R}^{3}\right) ^{2}}C\frac{\left( 1+\left\vert \mathbf{%
g}\right\vert ^{2}\right) ^{2}}{\left\vert \mathbf{g}\right\vert ^{2}}%
e^{-m\left\vert \mathbf{G}\right\vert ^{2}-E_{ij}}d\mathbf{g}d\mathbf{G} \\
&\leq &C\int_{0}^{\infty }R^{2}e^{-mR^{2}}dR\int_{0}^{\infty
}e^{-ms^{2}/4}\left( 1+s^{2}\right) ^{2}ds=C\text{.}
\end{eqnarray*}%
Hence,%
\begin{equation*}
K_{ij}^{\left( 1\right) }=\int_{\mathbb{R}^{3}}k_{ij1}(\boldsymbol{\xi },%
\boldsymbol{\xi }_{\ast })\,h_{j\ast }\,d\boldsymbol{\xi }_{\ast }
\end{equation*}%
are Hilbert-Schmidt integral operators and as such compact on $L^{2}\left( d%
\boldsymbol{\xi }\right) $, see e.g. Theorem 7.83 in \cite{RenardyRogers},
for all $\left\{ i,j\right\} \subseteq \left\{ 1,...,r\right\} $.

\begin{figure}[h]
\centering
\includegraphics[width=0.6\textwidth]{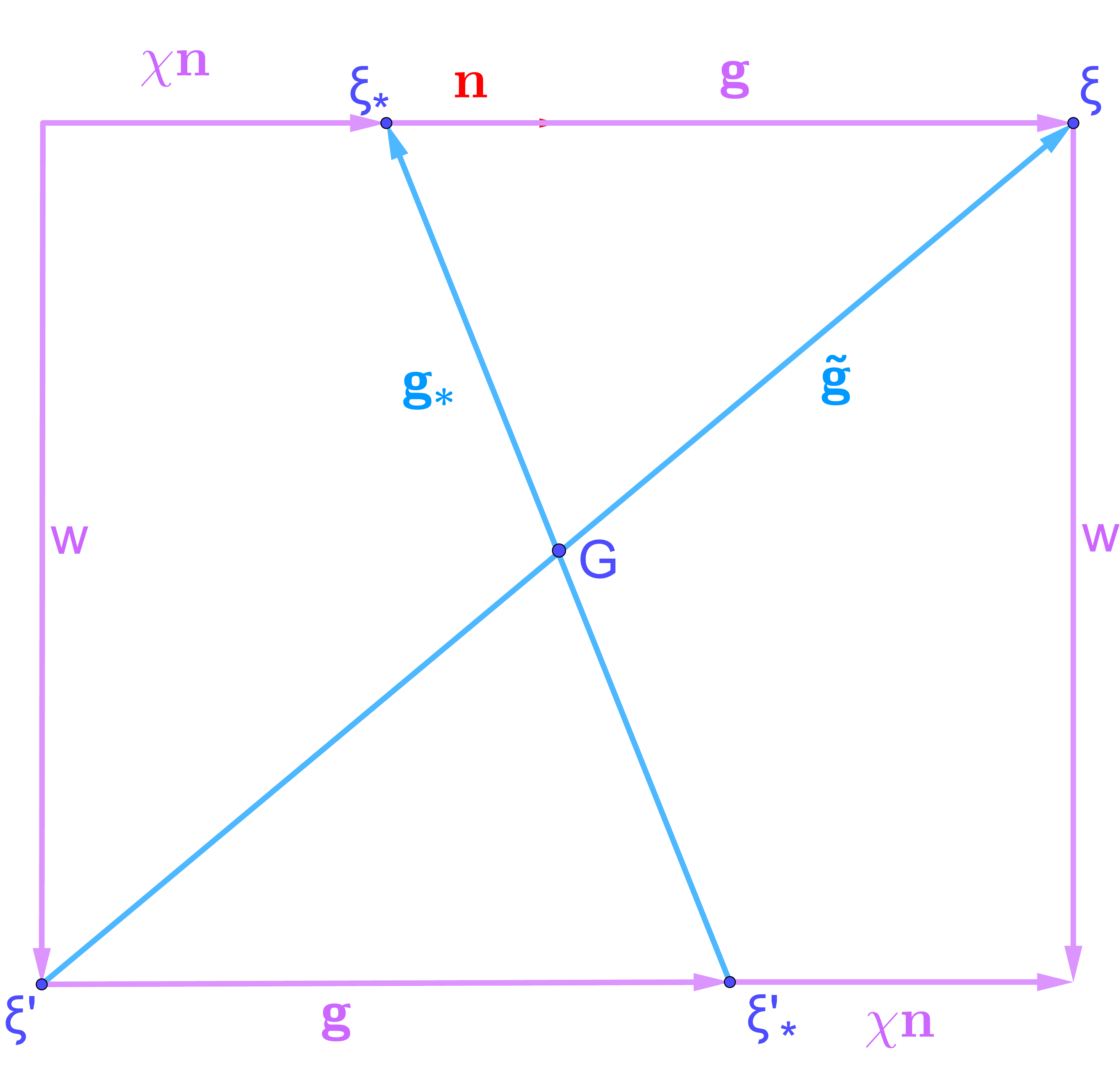}
\caption{Typical collision of $K_{ij}^{(2)}$.}
\label{fig2}
\end{figure}

\textbf{II. Compactness of }$K_{ij}^{\left( 2\right) }=\int_{\mathbb{R}%
^{3}}k_{ij2}(\boldsymbol{\xi },\boldsymbol{\xi }_{\ast })\,h_{j\ast }\,d%
\boldsymbol{\xi }_{\ast }$.

Noting that, see Figure $\ref{fig2}$,%
\begin{eqnarray*}
&&W(\boldsymbol{\xi },\boldsymbol{\xi }^{\prime },I_{i},I_{k}\left\vert 
\boldsymbol{\xi }_{\ast },\boldsymbol{\xi }_{\ast }^{\prime
},I_{j},I_{l}\right. ) \\
&=&4m\varphi _{i}\varphi _{k}\sigma _{ik}^{jl}\frac{\left\vert \widetilde{%
\mathbf{g}}\right\vert }{\left\vert \mathbf{g}_{\ast }\right\vert }\delta
_{3}\left( \boldsymbol{\xi }^{\prime }-\boldsymbol{\xi }_{\ast }^{\prime }+%
\boldsymbol{\xi }-\boldsymbol{\xi }_{\ast }\right) \\
&&\times \delta _{1}\left( \frac{m}{2}\left( \left\vert \boldsymbol{\xi }%
\right\vert ^{2}-\left\vert \boldsymbol{\xi }_{\ast }\right\vert
^{2}+\left\vert \boldsymbol{\xi }^{\prime }\right\vert ^{2}-\left\vert 
\boldsymbol{\xi }_{\ast }^{\prime }\right\vert ^{2}\right) -\Delta
I_{ik}^{jl}\right) \\
&=&4m\varphi _{i}\varphi _{k}\sigma _{ik}^{jl}\frac{\left\vert \widetilde{%
\mathbf{g}}\right\vert }{\left\vert \mathbf{g}_{\ast }\right\vert }\delta
_{3}\left( \mathbf{g}^{\prime }+\mathbf{g}\right) \delta _{1}\left( m\left( 
\boldsymbol{\xi }_{\ast }-\boldsymbol{\xi }^{\prime }\right) \cdot \mathbf{g}%
-\Delta I_{ik}^{jl}\right) \\
&=&4\varphi _{i}\varphi _{k}\sigma _{ik}^{jl}\frac{\left\vert \widetilde{%
\mathbf{g}}\right\vert }{\left\vert \mathbf{g}_{\ast }\right\vert \left\vert 
\mathbf{g}\right\vert }\delta _{3}\left( \mathbf{g}^{\prime }+\mathbf{g}%
\right) \delta _{1}\left( \chi -\frac{\Delta I_{ik}^{jl}}{m\left\vert 
\mathbf{g}\right\vert }\right) \text{, with }\chi =\left( \boldsymbol{\xi }%
_{\ast }-\boldsymbol{\xi }^{\prime }\right) \cdot \frac{\mathbf{g}}{%
\left\vert \mathbf{g}\right\vert }\text{,} \\
\mathbf{g} &=&\boldsymbol{\xi }-\boldsymbol{\xi }\mathbf{_{\ast }}\!\text{, }%
\mathbf{g}^{\prime }=\boldsymbol{\xi }^{\prime }-\boldsymbol{\xi }_{\ast
}^{\prime }\!\text{, }\widetilde{\mathbf{g}}=\boldsymbol{\xi }^{\prime }-%
\boldsymbol{\xi }\text{, }\mathbf{g}_{\ast }=\boldsymbol{\xi }_{\ast
}^{\prime }-\boldsymbol{\xi }_{\ast }\!\text{, }\Delta
I_{ik}^{jl}=I_{j}+I_{l}-I_{i}-I_{k}\text{,}
\end{eqnarray*}%
by the change of variables $\left\{ \boldsymbol{\xi }^{\prime },\boldsymbol{%
\xi }_{\ast }^{\prime }\right\} \rightarrow \left\{ \mathbf{g}^{\prime }=%
\boldsymbol{\xi }^{\prime }-\boldsymbol{\xi }_{\ast }^{\prime },~\mathbf{h}=%
\boldsymbol{\xi }^{\prime }-\boldsymbol{\xi }_{\ast }\right\} $, where 
\begin{equation*}
d\boldsymbol{\xi }^{\prime }d\boldsymbol{\xi }_{\ast }^{\prime }=d\mathbf{g}%
^{\prime }d\mathbf{h}=d\mathbf{g}^{\prime }d\chi d\mathbf{w}\text{, with }%
\mathbf{w}=\boldsymbol{\xi }^{\prime }-\boldsymbol{\xi }_{\ast }+\chi 
\mathbf{n}\text{ and }\mathbf{n}=\frac{\mathbf{g}}{\left\vert \mathbf{g}%
\right\vert }\text{,}
\end{equation*}%
the expression $\left( \ref{k1}\right) $ of $k_{ij2}$ may be transformed to%
\begin{eqnarray*}
&&k_{ij2}(\boldsymbol{\xi },\boldsymbol{\xi }_{\ast }) \\
&=&\sum\limits_{k,l=1}^{r}\int_{\left( \mathbb{R}^{3}\right) ^{2}}2\left( 
\frac{M_{k}^{\prime }M_{l\ast }^{\prime }}{\varphi _{i}\varphi _{j}\varphi
_{k}\varphi _{l}}\right) ^{1/2}W(\boldsymbol{\xi },\boldsymbol{\xi }^{\prime
},I_{i},I_{k}\left\vert \boldsymbol{\xi }_{\ast },\boldsymbol{\xi }_{\ast
}^{\prime },I_{j},I_{l}\right. )\,d\mathbf{g}^{\prime }d\mathbf{h} \\
&=&\frac{8\varphi _{i}^{1/2}}{\varphi _{j}^{1/2}}\sum\limits_{k,l=1}^{r}\int%
\limits_{\left( \mathbb{R}^{3}\right) ^{\perp _{\mathbf{n}}}}\!\!\frac{%
\varphi _{k}\left\vert \widetilde{\mathbf{g}}\right\vert }{\left\vert 
\mathbf{g}_{\ast }\right\vert \left\vert \mathbf{g}\right\vert }\mathbf{1}%
_{m\left\vert \widetilde{\mathbf{g}}\right\vert ^{2}>4\Delta
I_{ik}^{jl}}\left( \frac{M_{k}^{\prime }M_{l\ast }^{\prime }}{\varphi
_{k}\varphi _{l}}\right) ^{1/2}\!\!\!\sigma _{ik}^{jl}\left( \left\vert 
\widetilde{\mathbf{g}}\right\vert ,\frac{\widetilde{\mathbf{g}}\cdot \mathbf{%
g}_{\ast }}{\left\vert \widetilde{\mathbf{g}}\right\vert \left\vert \mathbf{g%
}_{\ast }\right\vert }\right) d\mathbf{w}\text{.\textbf{\ }}
\end{eqnarray*}%
Here, see Figure $\ref{fig2}$,%
\begin{equation*}
\left\{ 
\begin{array}{c}
\boldsymbol{\xi }^{\prime }=\boldsymbol{\xi }_{\ast }+\mathbf{w}-\chi 
\mathbf{n} \\ 
\boldsymbol{\xi }_{\ast }^{\prime }=\boldsymbol{\xi }+\mathbf{w}-\chi 
\mathbf{n}%
\end{array}%
\right. \text{, with }\chi =\chi _{ik}^{jl}=\chi _{ik}^{jl}\left( \left\vert 
\mathbf{g}\right\vert \right) =\frac{\Delta I_{ik}^{jl}}{m\left\vert \mathbf{%
g}\right\vert }\text{,}
\end{equation*}%
implying that%
\begin{eqnarray*}
\frac{\left\vert \boldsymbol{\xi }^{\prime }\right\vert ^{2}}{2}+\frac{%
\left\vert \boldsymbol{\xi }_{\ast }^{\prime }\right\vert ^{2}}{2}
&=&\left\vert \frac{\boldsymbol{\xi +\xi }_{\ast }}{2}-\chi _{ik}^{jl}%
\mathbf{n}+\mathbf{w}\right\vert ^{2}+\frac{\left\vert \boldsymbol{\xi }-%
\boldsymbol{\xi }_{\ast }\right\vert ^{2}}{4} \\
&=&\left\vert \frac{\left( \boldsymbol{\xi +\xi }_{\ast }\right) _{\perp _{%
\boldsymbol{n}}}}{2}+\mathbf{w}\right\vert ^{2}+\left( \frac{\left( 
\boldsymbol{\xi +\xi }_{\ast }\right) _{\mathbf{n}}}{2}-\chi
_{ik}^{jl}\right) ^{2}+\frac{\left\vert \mathbf{g}\right\vert ^{2}}{4} \\
&=&\left\vert \frac{\left( \boldsymbol{\xi +\xi }_{\ast }\right) _{\perp _{%
\boldsymbol{n}}}}{2}+\mathbf{w}\right\vert ^{2}\!+\frac{\left( \left\vert 
\boldsymbol{\xi }_{\ast }\right\vert ^{2}-\left\vert \boldsymbol{\xi }%
\right\vert ^{2}+2\chi _{ik}^{jl}\left\vert \boldsymbol{\xi }-\boldsymbol{%
\xi }_{\ast }\right\vert \right) ^{2}}{4\left\vert \boldsymbol{\xi }-%
\boldsymbol{\xi }_{\ast }\right\vert ^{2}}+\frac{\left\vert \mathbf{g}%
\right\vert ^{2}}{4}\!\text{,}
\end{eqnarray*}%
where%
\begin{eqnarray}
\left( \boldsymbol{\xi +\xi }_{\ast }\right) _{\mathbf{n}} &=&\left( 
\boldsymbol{\xi +\xi }_{\ast }\right) \cdot \mathbf{n}=\frac{\left\vert 
\boldsymbol{\xi }\right\vert ^{2}-\left\vert \boldsymbol{\xi }_{\ast
}\right\vert ^{2}}{\left\vert \boldsymbol{\xi }-\boldsymbol{\xi }_{\ast
}\right\vert }\text{ and}  \notag \\
\ \left( \boldsymbol{\xi +\xi }_{\ast }\right) _{\perp _{\boldsymbol{n}}} &=&%
\boldsymbol{\xi +\xi }_{\ast }-\left( \boldsymbol{\xi +\xi }_{\ast }\right)
_{\mathbf{n}}\mathbf{n}.  \label{o1}
\end{eqnarray}%
Hence, by assumption $\left( \ref{est1}\right) $, 
\begin{eqnarray}
&&k_{ij2}^{2}(\boldsymbol{\xi },\boldsymbol{\xi }_{\ast })  \notag \\
&\leq &\frac{C}{\left\vert \mathbf{g}\right\vert ^{2}}\exp \left( -\frac{m}{4%
}\left\vert \mathbf{g}\right\vert ^{2}\right) \left( \sum\limits_{k,l=1}^{r}%
\frac{1}{e^{\left( I_{k}+I_{l}\right) /2}}\exp \left( -m\frac{\left(
\left\vert \boldsymbol{\xi }_{\ast }\right\vert ^{2}-\left\vert \boldsymbol{%
\xi }\right\vert ^{2}+2\chi _{ik}^{jl}\left\vert \mathbf{g}\right\vert
\right) ^{2}}{8\left\vert \mathbf{g}\right\vert ^{2}}\right) \right.  \notag
\\
&&\times \!\!\!\!\!\left. \int\limits_{\left( \mathbb{R}^{3}\right) ^{\perp
_{\mathbf{n}}}}\!\!\!\!\!\mathbf{1}_{m\left\vert \widetilde{\mathbf{g}}%
\right\vert ^{2}>4\Delta I_{ik}^{jl}}\!\left( \!1+\frac{1}{\left( \widetilde{%
\Psi }_{ik}^{jl}\right) ^{1-\gamma /2}}\!\right) \!\exp \!\!\left( \!-\frac{m%
}{2}\left\vert \frac{\left( \boldsymbol{\xi +\xi }_{\ast }\right) _{\perp _{%
\boldsymbol{n}}}}{2}+\mathbf{w}\right\vert ^{2}\right) \!d\mathbf{w}%
\!\!\right) ^{2}  \notag \\
&\leq &\frac{C}{\left\vert \mathbf{g}\right\vert ^{2}}\exp \left( -\frac{m}{4%
}\left\vert \mathbf{g}\right\vert ^{2}\right) \left(
\sum\limits_{k,l=1}^{r}\exp \left( -m\frac{\left( \left\vert \boldsymbol{\xi 
}_{\ast }\right\vert ^{2}-\left\vert \boldsymbol{\xi }\right\vert ^{2}+2\chi
_{ik}^{jl}\left\vert \mathbf{g}\right\vert \right) ^{2}}{8\left\vert \mathbf{%
g}\right\vert ^{2}}\right) \right) ^{2}  \notag \\
&=&\frac{C}{\left\vert \mathbf{g}\right\vert ^{2}}\left(
\sum\limits_{k,l=1}^{r}\exp \left( -\frac{m}{8}\left( \left\vert \mathbf{g}%
\right\vert +2\left\vert \boldsymbol{\xi }\right\vert \cos \varphi +2\chi
_{ik}^{jl}\right) ^{2}-\frac{m}{8}\left\vert \mathbf{g}\right\vert
^{2}\right) \right) ^{2}  \notag \\
&\leq &\frac{C}{\left\vert \mathbf{g}\right\vert ^{2}}\sum%
\limits_{k,l=1}^{r}\exp \left( -m\left( \dfrac{\left\vert \mathbf{g}%
\right\vert }{2}+\left\vert \boldsymbol{\xi }\right\vert \cos \varphi +\chi
_{ik}^{jl}\right) ^{2}-\frac{m}{4}\left\vert \mathbf{g}\right\vert
^{2}\right) \text{, with}  \notag \\
&&\cos \varphi =\mathbf{n}\cdot \frac{\boldsymbol{\xi }}{\left\vert 
\boldsymbol{\xi }\right\vert }\text{, }\widetilde{\Psi }_{ik}^{jl}=\left%
\vert \widetilde{\mathbf{g}}\right\vert \left\vert \mathbf{g}_{\ast
}\right\vert \text{, and }\left\vert \mathbf{g}_{\ast }\right\vert
^{2}=\left\vert \widetilde{\mathbf{g}}\right\vert ^{2}-\frac{4}{m}\Delta
I_{ik}^{jl}\text{,}  \label{b2}
\end{eqnarray}%
since, 
\begin{eqnarray*}
&&\int_{\left( \mathbb{R}^{3}\right) ^{\perp _{\mathbf{n}}}}\!\!\mathbf{1}%
_{m\left\vert \widetilde{\mathbf{g}}\right\vert ^{2}>4\Delta
I_{ik}^{jl}}\!\left( \!1+\frac{1}{\left( \widetilde{\Psi }_{ik}^{jl}\right)
^{1-\gamma /2}}\!\right) \!\exp \!\!\left( \!-\frac{m}{2}\left\vert \frac{%
\left( \boldsymbol{\xi +\xi }_{\ast }\right) _{\perp _{\boldsymbol{n}}}}{2}+%
\mathbf{w}\right\vert ^{2}\right) \!d\mathbf{w} \\
&\leq &\int_{\left\vert \mathbf{w}\right\vert \leq 1}1+\left\vert \mathbf{w}%
\right\vert ^{\gamma -2}\,d\mathbf{w}+2\int_{\left\vert \mathbf{w}%
\right\vert \geq 1}\exp \left( -\frac{m}{2}\left\vert \frac{\left( 
\boldsymbol{\xi +\xi }_{\ast }\right) _{\perp _{\boldsymbol{n}}}}{2}+\mathbf{%
w}\right\vert ^{2}\right) \,d\mathbf{w} \\
&\leq &\int_{\left\vert \mathbf{w}\right\vert \leq 1}1+\left\vert \mathbf{w}%
\right\vert ^{\gamma -2}\,d\mathbf{w}+2\int_{\left( \mathbb{R}^{3}\right)
^{\perp _{\mathbf{n}}}}e^{-m\left\vert \widetilde{\mathbf{w}}\right\vert
^{2}/2}\,d\widetilde{\mathbf{w}} \\
&=&2\pi \left( \int_{0}^{1}R+R^{\gamma -1}\,dR+\int_{0}^{\infty
}Re^{-mR^{2}/2}\,dR\right) =C\text{.}
\end{eqnarray*}

To show that $k_{ij2}(\boldsymbol{\xi },\boldsymbol{\xi }_{\ast })\mathbf{1}%
_{\mathfrak{h}_{N}}\in L^{2}\left( d\boldsymbol{\xi \,}d\boldsymbol{\xi }%
_{\ast }\right) $ for any (non-zero) natural number $N$, separate the
integration domain\ of the integral of $k_{ij2}^{2}(\boldsymbol{\xi },%
\boldsymbol{\xi }_{\ast })$ over $\left( \mathbb{R}^{3}\right) ^{2}$ in two
separate domains $\left\{ \left( \mathbb{R}^{3}\right) ^{2}\text{; }%
\left\vert \mathbf{g}\right\vert \geq \left\vert \boldsymbol{\xi }%
\right\vert \right\} $ and $\left\{ \left( \mathbb{R}^{3}\right) ^{2}\text{; 
}\left\vert \mathbf{g}\right\vert \leq \left\vert \boldsymbol{\xi }%
\right\vert \right\} $. The integral of $k_{ij2}^{2}$ over the domain $%
\left\{ \left( \mathbb{R}^{3}\right) ^{2}\text{; }\left\vert \mathbf{g}%
\right\vert \geq \left\vert \boldsymbol{\xi }\right\vert \right\} $ will be
bounded, since 
\begin{eqnarray*}
\int_{\left\vert \mathbf{g}\right\vert \geq \left\vert \boldsymbol{\xi }%
\right\vert }k_{ij2}^{2}(\boldsymbol{\xi },\boldsymbol{\xi }_{\ast })\,d%
\boldsymbol{\xi }d\boldsymbol{\xi }_{\ast } &\leq &\int_{\left\vert \mathbf{g%
}\right\vert \geq \left\vert \boldsymbol{\xi }\right\vert }\frac{C}{%
\left\vert \mathbf{g}\right\vert ^{2}}e^{-m\left\vert \mathbf{g}\right\vert
^{2}/4}d\mathbf{g\,}d\boldsymbol{\xi } \\
&=&C\int_{0}^{\infty }\int_{\eta }^{\infty }e^{-R^{2}/4}\eta ^{2}dR\mathbf{\,%
}d\eta \\
&\leq &C\int_{0}^{\infty }e^{-R^{2}/8}dR\int_{0}^{\infty }e^{-\eta
^{2}/8}\eta ^{2}d\eta =C.
\end{eqnarray*}%
As for the second domain, consider the truncated domains $\left\{ \!\left( 
\mathbb{R}^{3}\right) ^{2}\!\!\text{;}\left\vert \mathbf{g}\right\vert \leq
\left\vert \boldsymbol{\xi }\right\vert \leq N\!\right\} $ for (non-zero)
natural numbers $N$. Then 
\begin{eqnarray*}
&&\int_{\left\vert \mathbf{g}\right\vert \leq \left\vert \boldsymbol{\xi }%
\right\vert \leq N}k_{ij2}^{2}(\boldsymbol{\xi },\boldsymbol{\xi }_{\ast
})\,d\boldsymbol{\xi }d\boldsymbol{\xi }_{\ast } \\
&\leq &\int_{\left\vert \mathbf{g}\right\vert \leq \left\vert \boldsymbol{%
\xi }\right\vert \leq N}\!\frac{C}{\left\vert \mathbf{g}\right\vert ^{2}}%
\sum\limits_{k,l=1}^{r}\exp \!\left( \!-m\left( \!\dfrac{\left\vert \mathbf{g%
}\right\vert }{2}+\left\vert \boldsymbol{\xi }\right\vert \cos \varphi +\chi
_{ik}^{jl}\left( \left\vert \mathbf{g}\right\vert \right) \!\right) ^{2}-%
\frac{m}{4}\left\vert \mathbf{g}\right\vert ^{2}\!\right) d\boldsymbol{\xi }d%
\mathbf{g} \\
&=&C\!\!\sum\limits_{k,l=1}^{r}\int_{0}^{N}\!\!\!\int_{0}^{\eta
}\!\!\int_{0}^{\pi }\!\!\eta ^{2}\exp \!\!\left( \!-m\!\left( \!\dfrac{R}{2}%
+\eta \cos \varphi +\chi _{ik}^{jl}\left( R\right) \!\!\right) ^{2}\!\!-%
\frac{m}{4}R^{2}\!\!\right) \!\sin \varphi \,d\varphi dRd\eta \\
&=&C\sum\limits_{k,l=1}^{r}\int_{0}^{N}\int_{0}^{\eta }\int_{R+2\chi
_{ik}^{jl}\left( R\right) -2\eta }^{R+2\chi _{ik}^{jl}\left( R\right) +2\eta
}\eta e^{-m\zeta ^{2}/4}e^{-mR^{2}/4}d\zeta \mathbf{\,}dR\mathbf{\,}d\eta \\
&\leq &C\int_{0}^{N}\eta \,d\eta \int_{0}^{\infty
}e^{-mR^{2}/4}dR\int_{-\infty }^{\infty }e^{-m\zeta ^{2}/4}d\zeta =CN^{2}%
\text{.}
\end{eqnarray*}

Furthermore, the integral of $k_{ij2}(\boldsymbol{\xi },\boldsymbol{\xi }%
_{\ast })$ with respect to $\boldsymbol{\xi }$ over $\mathbb{R}^{3}$ is
bounded in $\boldsymbol{\xi }_{\ast }$. Indeed, directly by the bound $%
\left( \ref{b2}\right) $ on $k_{ij2}^{2}$%
\begin{equation}
0\leq k_{ij2}(\boldsymbol{\xi },\boldsymbol{\xi }_{\ast })\leq \frac{C}{%
\left\vert \mathbf{g}\right\vert }\sum\limits_{k,l=1}^{r}\exp \left( -\frac{m%
}{8}\left( \left\vert \mathbf{g}\right\vert +2\left\vert \boldsymbol{\xi }%
\right\vert \cos \varphi +2\chi _{ik}^{jl}\right) ^{2}-\frac{m}{8}\left\vert 
\mathbf{g}\right\vert ^{2}\right) \text{.}  \label{b7}
\end{equation}%
Then the following bound on the integral of $k_{ij2}$ with respect to $%
\boldsymbol{\xi }_{\ast }$ over the domain $\left\{ \mathbb{R}^{3}\text{; }%
\left\vert \mathbf{g}\right\vert \geq \left\vert \boldsymbol{\xi }%
\right\vert \right\} $ can be obtained for $\left\vert \boldsymbol{\xi }%
\right\vert \neq 0$ 
\begin{eqnarray*}
\int_{\left\vert \mathbf{g}\right\vert \geq \left\vert \boldsymbol{\xi }%
\right\vert }k_{ij2}(\boldsymbol{\xi },\boldsymbol{\xi }_{\ast })\,d%
\boldsymbol{\xi }_{\ast } &\leq &\frac{C}{\left\vert \boldsymbol{\xi }%
\right\vert }\int_{\left\vert \mathbf{g}\right\vert \geq \left\vert 
\boldsymbol{\xi }\right\vert }e^{-m\left\vert \mathbf{g}\right\vert ^{2}/8}d%
\mathbf{g} \\
&=&\frac{C}{\left\vert \boldsymbol{\xi }\right\vert }\int_{\left\vert 
\boldsymbol{\xi }\right\vert }^{\infty }R^{2}e^{-mR^{2}/8}dR \\
&\leq &\frac{C}{\left\vert \boldsymbol{\xi }\right\vert }\int_{0}^{\infty
}R^{2}e^{-mR^{2}/8}\,dR=\frac{C}{\left\vert \boldsymbol{\xi }\right\vert }%
\text{,}
\end{eqnarray*}%
as well as, over the domain $\left\{ \mathbb{R}^{3}\text{; }\left\vert 
\mathbf{g}\right\vert \leq \left\vert \boldsymbol{\xi }\right\vert \right\} $
\begin{eqnarray*}
&&\int_{\left\vert \mathbf{g}\right\vert \leq \left\vert \boldsymbol{\xi }%
\right\vert }k_{ij2}(\boldsymbol{\xi },\boldsymbol{\xi }_{\ast })\,d%
\boldsymbol{\xi }_{\ast } \\
&\leq &\int_{\left\vert \mathbf{g}\right\vert \leq \left\vert \boldsymbol{%
\xi }\right\vert }\frac{C}{\left\vert \mathbf{g}\right\vert }%
\sum\limits_{k,l=1}^{r}\exp \left( -\frac{m}{8}\left( \left\vert \mathbf{g}%
\right\vert +2\left\vert \boldsymbol{\xi }\right\vert \cos \varphi +2\chi
_{ik}^{jl}\left( \left\vert \mathbf{g}\right\vert \right) \right) ^{2}-\frac{%
m}{8}\left\vert \mathbf{g}\right\vert ^{2}\right) d\mathbf{g} \\
&=&C\!\sum\limits_{k,l=1}^{r}\int_{0}^{\left\vert \boldsymbol{\xi }%
\right\vert }\!\!\!\int_{0}^{\pi }\!\!R\exp \!\!\left( \!-\frac{m}{8}%
\!\left( \!R+2\left\vert \boldsymbol{\xi }_{\ast }\right\vert \cos \varphi
+2\chi _{ik}^{jl}\left( R\right) \!\!\right) ^{2}\!-\frac{m}{8}%
R^{2}\!\right) \!\sin \varphi \,d\varphi dR \\
&=&\frac{C}{\left\vert \boldsymbol{\xi }\right\vert }\sum\limits_{k,l=1}^{r}%
\int_{0}^{\left\vert \boldsymbol{\xi }\right\vert }\int_{R+2\chi
_{ik}^{jl}\left( R\right) -2\left\vert \boldsymbol{\xi }\right\vert
}^{R+2\chi _{ik}^{jl}\left( R\right) +2\left\vert \boldsymbol{\xi }%
\right\vert }Re^{-m\eta ^{2}/8}e^{-mR^{2}/8}d\eta dR \\
&\leq &\frac{C}{\left\vert \boldsymbol{\xi }\right\vert }\int_{0}^{\infty
}Re^{-mR^{2}/8}\,dR\int_{-\infty }^{\infty }e^{-m\eta ^{2}/8}d\eta =\frac{C}{%
\left\vert \boldsymbol{\xi }\right\vert }\text{.}
\end{eqnarray*}%
However, due to the symmetry $k_{ij2}(\boldsymbol{\xi },\boldsymbol{\xi }%
_{\ast })=k_{ji2}(\boldsymbol{\xi }_{\ast },\boldsymbol{\xi })$ $\left( \ref%
{sa2}\right) $, also 
\begin{equation*}
\int_{\mathbb{R}^{3}}k_{ij2}(\boldsymbol{\xi },\boldsymbol{\xi }_{\ast })\,d%
\boldsymbol{\xi }\leq \frac{C}{\left\vert \boldsymbol{\xi }_{\ast
}\right\vert }\text{.}
\end{equation*}%
Therefore, if $\left\vert \boldsymbol{\xi }_{\ast }\right\vert \geq 1$, then%
\begin{equation*}
\int_{\mathbb{R}^{3}}k_{ij2}(\boldsymbol{\xi },\boldsymbol{\xi }_{\ast })\,d%
\boldsymbol{\xi }\leq \frac{C}{\left\vert \boldsymbol{\xi }_{\ast
}\right\vert }\leq C\text{.}
\end{equation*}%
Otherwise, if $\left\vert \boldsymbol{\xi }_{\ast }\right\vert \leq 1$,
then, by the bound $\left( \ref{b7}\right) $,%
\begin{eqnarray*}
&&\int_{\mathbb{R}^{3}}k_{ij2}(\boldsymbol{\xi },\boldsymbol{\xi }_{\ast
})\,d\boldsymbol{\xi =}\int_{\mathbb{R}^{3}}k_{ji2}(\boldsymbol{\xi }_{\ast
},\boldsymbol{\xi })\,d\boldsymbol{\xi } \\
&\leq &\int_{\mathbb{R}^{3}}\frac{C}{\left\vert \mathbf{g}\right\vert }%
\sum\limits_{k,l=1}^{r}\exp \left( -\frac{m}{8}\left( \left\vert \mathbf{g}%
\right\vert +2\left\vert \boldsymbol{\xi }_{\ast }\right\vert \cos \varphi
+2\chi _{ik}^{jl}\left( \left\vert \mathbf{g}\right\vert \right) \right)
^{2}-\frac{m}{8}\left\vert \mathbf{g}\right\vert ^{2}\right) \,d\mathbf{g} \\
&=&C\!\sum\limits_{k,l=1}^{r}\int_{0}^{\infty }\!\!\int_{0}^{\pi }\!\!R\exp
\!\!\left( \!-\frac{m}{8}\!\left( \!R+2\left\vert \boldsymbol{\xi }_{\ast
}\right\vert \cos \varphi +2\chi _{ik}^{jl}\left( R\right) \!\!\right)
^{2}\!\!-\frac{m}{8}R^{2}\!\right) \!\sin \varphi \,d\varphi dR \\
&\leq &\sum\limits_{k,l=1}^{r}\frac{C}{\left\vert \boldsymbol{\xi }_{\ast
}\right\vert }\int_{0}^{\infty }Re^{-mR^{2}/8}dR\int_{R+2\chi
_{ik}^{jl}\left( R\right) -2\left\vert \boldsymbol{\xi }_{\ast }\right\vert
}^{R+2\chi _{ik}^{jl}\left( R\right) +2\left\vert \boldsymbol{\xi }_{\ast
}\right\vert }\,d\eta =C\text{.}
\end{eqnarray*}%
Furthermore, 
\begin{eqnarray*}
&&\sup_{\boldsymbol{\xi }\in \mathbb{R}^{3}}\int_{\mathbb{R}^{3}}k_{ij2}(%
\boldsymbol{\xi },\boldsymbol{\xi }_{\ast })-k_{ij2}(\boldsymbol{\xi },%
\boldsymbol{\xi }_{\ast })\mathbf{1}_{\mathfrak{h}_{N}}\,d\boldsymbol{\xi }%
_{\ast } \\
&\leq &\sup_{\boldsymbol{\xi }\in \mathbb{R}^{3}}\int_{\left\vert \mathbf{g}%
\right\vert \leq \frac{1}{N}}k_{ij2}(\boldsymbol{\xi },\boldsymbol{\xi }%
_{\ast })\,d\boldsymbol{\xi }_{\ast }+\sup_{\left\vert \boldsymbol{\xi }%
\right\vert \geq N}\int_{\mathbb{R}^{3}}k_{ij2}(\boldsymbol{\xi },%
\boldsymbol{\xi }_{\ast })\,d\boldsymbol{\xi }_{\ast } \\
&\leq &\int_{\left\vert \mathbf{g}\right\vert \leq \frac{1}{N}}\frac{C}{%
\left\vert \mathbf{g}\right\vert }\,d\mathbf{g}+\frac{C}{N}\leq C\left(
\int_{0}^{\frac{1}{N}}R\,dR+\frac{1}{N}\right) \\
&=&C\left( \frac{1}{N^{2}}+\frac{1}{N}\right) \rightarrow 0\text{ as }%
N\rightarrow \infty \text{.}
\end{eqnarray*}%
Hence, by Lemma \ref{LGD} the operators 
\begin{equation*}
K_{ij}^{(2)}=\int_{\mathbb{R}^{3}}k_{ij2}(\boldsymbol{\xi },\boldsymbol{\xi }%
_{\ast })\,h_{j\ast }\,d\boldsymbol{\xi }_{\ast }
\end{equation*}%
are compact on $L^{2}\left( d\boldsymbol{\xi }\right) $ for all $\left\{
i,j\right\} \subseteq \left\{ 1,...,r\right\} $.

Concluding, the operator 
\begin{equation*}
K=(K_{1},...,K_{r})=\sum%
\limits_{j=1}^{r}(K_{1j}^{(2)},...,K_{rj}^{(2)})-(K_{1j}^{(1)},...,K_{rj}^{(1)})
\end{equation*}%
is a compact self-adjoint operator on $\left( L^{2}\left( d\boldsymbol{\xi }%
\right) \right) ^{r}$. The self-adjointness is due to the symmetry relations 
$\left( \ref{sa1}\right) ,\left( \ref{sa2}\right) $, cf. \cite[p.198]%
{Yoshida-65}.
\end{proof}

\subsubsection{\label{PT2} Bounds on the collision frequency}

This section concerns the proof of Theorem \ref{Thm2}.

\begin{proof}
Under assumption $\left( \ref{e1}\right) $ each collision frequency $%
\upsilon _{1},...,\upsilon _{r}$ equals%
\begin{eqnarray*}
\upsilon _{i} &=&\sum\limits_{j,k,l=1}^{r}\int_{\left( \mathbb{R}^{3}\right)
^{3}}\frac{M_{j\ast }}{\varphi _{i}\varphi _{j}}W(\boldsymbol{\xi },%
\boldsymbol{\xi }_{\ast },I_{i},I_{j}\left\vert \boldsymbol{\xi }^{\prime },%
\boldsymbol{\xi }_{\ast }^{\prime },I_{k},I_{l}\right. )\,d\boldsymbol{\xi }%
_{\ast }d\boldsymbol{\xi }^{\prime }d\boldsymbol{\xi }_{\ast }^{\prime } \\
&=&\sum\limits_{j,k,l=1}^{r}\int_{\left( \mathbb{R}^{3}\right) ^{3}}M_{j\ast
}\sigma _{ij}^{kl}\frac{\left\vert \mathbf{g}\right\vert }{\left\vert 
\mathbf{g}^{\prime }\right\vert ^{2}}\delta _{3}\left( \mathbf{G}-\mathbf{G}%
^{\prime }\right) \mathbf{1}_{m\left\vert \mathbf{g}\right\vert >4\Delta
I_{ij}^{kl}} \\
&&\times \delta _{1}\left( \sqrt{\left\vert \mathbf{g}\right\vert ^{2}-\frac{%
4}{m}\Delta I_{ij}^{kl}}-\left\vert \mathbf{g}^{\prime }\right\vert \right)
\,d\boldsymbol{\xi }_{\ast }d\mathbf{G}^{\prime }d\mathbf{g}^{\prime } \\
&=&\frac{C}{\varphi _{i}}\sum\limits_{j,k,l=1}^{r}e^{-I_{j}}\int_{\mathbb{R}%
^{3}}e^{-m\left\vert \boldsymbol{\xi }_{\ast }\right\vert ^{2}/2}\sigma
_{ij}^{kl}\left\vert \mathbf{g}\right\vert \mathbf{1}_{m\left\vert \mathbf{g}%
\right\vert >4\Delta I_{ij}^{kl}}\,d\boldsymbol{\xi }_{\ast }\int_{\mathbb{S}%
^{2}}\,d\boldsymbol{\omega } \\
&=&\frac{C}{\varphi _{i}}\sum\limits_{j,k,l=1}^{r}e^{-I_{j}}\int_{\mathbb{R}%
^{3}}e^{-m\left\vert \boldsymbol{\xi }_{\ast }\right\vert ^{2}/2}\sqrt{%
\left\vert \mathbf{g}\right\vert ^{2}-\frac{4}{m}\Delta I_{ij}^{kl}}\mathbf{1%
}_{m\left\vert \mathbf{g}\right\vert >4\Delta I_{ij}^{kl}}\,d\boldsymbol{\xi 
}_{\ast }\text{.}
\end{eqnarray*}%
Given $i\in \left\{ 1,...,r\right\} $, there are $\left\{ j,k,l\right\}
\subset \left\{ 1,...,r\right\} $, such that $\Delta I_{ij}^{kl}\leq 0$.
Assuming that $\Delta I_{ij}^{kl}\leq 0$ for some fixed $\left\{
j,k,l\right\} \subset \left\{ 1,...,r\right\} $, implies the inequality%
\begin{eqnarray*}
\upsilon _{i} &\geq &\frac{Ce^{-I_{j}}}{\varphi _{i}}\int_{\mathbb{R}%
^{3}}e^{-m\left\vert \boldsymbol{\xi }_{\ast }\right\vert ^{2}/2}\sqrt{%
\left\vert \mathbf{g}\right\vert ^{2}-\frac{4}{m}\Delta I_{ij}^{kl}}\,d%
\boldsymbol{\xi }_{\ast } \\
&\geq &C\int_{\mathbb{R}^{3}}\left\vert \mathbf{g}\right\vert
e^{-m\left\vert \boldsymbol{\xi }_{\ast }\right\vert ^{2}/2}\,d\boldsymbol{%
\xi }_{\ast }\text{.}
\end{eqnarray*}

Now consider two different cases separately: $\left\vert \boldsymbol{\xi }%
\right\vert \leq 1$ and $\left\vert \boldsymbol{\xi }\right\vert \geq 1$.

Firstly, if $\left\vert \boldsymbol{\xi }\right\vert \leq 1$, then%
\begin{eqnarray*}
\upsilon _{i} &\geq &C\int_{\mathbb{R}^{3}}\left( \left\vert \left\vert 
\boldsymbol{\xi }\right\vert -\left\vert \boldsymbol{\xi }_{\ast
}\right\vert \right\vert \right) e^{-m\left\vert \boldsymbol{\xi }_{\ast
}\right\vert ^{2}/2}\,d\boldsymbol{\xi }_{\ast } \\
&\geq &C\int_{\left\vert \boldsymbol{\xi }_{\ast }\right\vert \geq 2}\left(
\left\vert \boldsymbol{\xi }_{\ast }\right\vert -\left\vert \boldsymbol{\xi }%
\right\vert \right) e^{-m\left\vert \boldsymbol{\xi }_{\ast }\right\vert
^{2}/2}\,d\boldsymbol{\xi }_{\ast } \\
&\geq &C\int_{\left\vert \boldsymbol{\xi }_{\ast }\right\vert \geq 2}\frac{1%
}{2}\left\vert \boldsymbol{\xi }_{\ast }\right\vert e^{-m\left\vert 
\boldsymbol{\xi }_{\ast }\right\vert ^{2}/2}\,d\boldsymbol{\xi }_{\ast } \\
&=&C\int_{1}^{\infty }R^{3}e^{-2mR^{2}}\,dR\int_{\mathbb{S}^{2}}\,d%
\boldsymbol{\omega }=C \\
&\geq &C\left( 1+\left\vert \boldsymbol{\xi }\right\vert \right) \text{.}
\end{eqnarray*}%
Secondly, if $\left\vert \boldsymbol{\xi }\right\vert \geq 1$, then%
\begin{eqnarray*}
\upsilon _{i} &\geq &C\int_{\mathbb{R}^{3}}\left( \left\vert \left\vert 
\boldsymbol{\xi }\right\vert -\left\vert \boldsymbol{\xi }_{\ast
}\right\vert \right\vert \right) e^{-m\left\vert \boldsymbol{\xi }_{\ast
}\right\vert ^{2}/2}\,d\boldsymbol{\xi }_{\ast } \\
&\geq &C\int_{\left\vert \boldsymbol{\xi }_{\ast }\right\vert \leq
1/2}\left( \left\vert \boldsymbol{\xi }\right\vert -\left\vert \boldsymbol{%
\xi }_{\ast }\right\vert \right) e^{-m\left\vert \boldsymbol{\xi }_{\ast
}\right\vert ^{2}/2}\,d\boldsymbol{\xi }_{\ast } \\
&\geq &C\frac{1}{2}\left\vert \boldsymbol{\xi }\right\vert
e^{-m/8}\int_{\left\vert \boldsymbol{\xi }_{\ast }\right\vert \leq 1/2}\,d%
\boldsymbol{\xi }_{\ast }=C\left\vert \boldsymbol{\xi }\right\vert \\
&\geq &C\left( 1+\left\vert \boldsymbol{\xi }\right\vert \right) \text{.}
\end{eqnarray*}%
Hence, there is a positive constant $\upsilon _{-}>0$, such that $\upsilon
_{i}\geq \upsilon _{-}\left( 1+\left\vert \boldsymbol{\xi }\right\vert
\right) $ for all $i\in \left\{ 1,...,r\right\} $ and $\boldsymbol{\xi }\in 
\mathbb{R}^{3}$.

On the other hand,%
\begin{eqnarray*}
\upsilon _{i} &\leq &C\sum\limits_{j,k,l=1}^{r}\int_{\mathbb{R}^{3}}\sqrt{%
\left\vert \mathbf{g}\right\vert ^{2}-\frac{4}{m}\Delta I_{ij}^{kl}}%
\,e^{-m\left\vert \boldsymbol{\xi }_{\ast }\right\vert ^{2}/2}\mathbf{1}%
_{m\left\vert \mathbf{g}\right\vert >4\Delta I_{ij}^{kl}}\,d\boldsymbol{\xi }%
_{\ast } \\
&\leq &C\sum\limits_{j,k,l=1}^{r}\int_{\mathbb{R}^{3}}\left( 1+\left\vert 
\boldsymbol{\xi }\right\vert +\left\vert \boldsymbol{\xi }_{\ast
}\right\vert \right) e^{-m\left\vert \boldsymbol{\xi }_{\ast }\right\vert
^{2}/2}\,d\boldsymbol{\xi }_{\ast } \\
&\leq &C\left( \left\vert \boldsymbol{\xi }\right\vert \int_{\mathbb{R}%
^{3}}e^{-m\left\vert \boldsymbol{\xi }_{\ast }\right\vert ^{2}/2}d%
\boldsymbol{\xi }_{\ast }+\int_{\mathbb{R}^{3}}e^{-m\left\vert \boldsymbol{%
\xi }_{\ast }\right\vert ^{2}/2}\left( 1+\left\vert \boldsymbol{\xi }_{\ast
}\right\vert \right) \,d\boldsymbol{\xi }_{\ast }\right) \\
&\leq &C\left( 1+\left\vert \boldsymbol{\xi }\right\vert \right) \text{.}
\end{eqnarray*}%
Hence, there is a positive constant $\upsilon _{+}>0$, such that $\upsilon
_{i}\leq \upsilon _{+}\left( 1+\left\vert \boldsymbol{\xi }\right\vert
\right) $ for all $i\in \left\{ 1,...,r\right\} $ and $\boldsymbol{\xi }\in 
\mathbb{R}^{3}$.
\end{proof}

\subsection{Mixtures\label{S4.2}}

This section is devoted to the proofs of the compactness properties in
Theorem \ref{Thm3} and the bounds on the collision frequency in Theorem \ref%
{Thm4} of the linearized collision operator for (monatomic) multicomponent
mixtures.

\subsubsection{\label{PT3}Compactness}

This section concerns the proof of Theorem \ref{Thm3}. Note that in the
proof the kernels are rewritten in such a way that $\boldsymbol{\xi }_{\ast
} $ - and not $\boldsymbol{\xi }^{\prime }$ and $\boldsymbol{\xi }_{\ast
}^{\prime }$ - always will be argument of the distribution functions. As for
single species, either $\boldsymbol{\xi }_{\ast }$ is an argument in the
loss term (as $\boldsymbol{\xi }$) or in the gain term (opposite to $%
\boldsymbol{\xi }$) of the collision operator. However,\ in the latter case,
unlike for single species, for mixtures we have to differ between two
different cases; either $\boldsymbol{\xi }_{\ast }$ is associated to the
same species as $\boldsymbol{\xi }$, or not. The kernels of the terms from
the loss part of the collision operator will be shown\ to be Hilbert-Schmidt
in a quite direct way. The kernels of - some of - the terms - for which $%
\boldsymbol{\xi }_{\ast }$ is associated to the same species as $\boldsymbol{%
\xi }$ - from the gain parts of the collision operators will be shown to be
approximately Hilbert-Schmidt in the sense of Lemma \ref{LGD}. By applying
the following lemma, Lemma \ref{L3}, (for disparate masses) by Boudin et al
in \cite{BGPS-13}, it will be shown that the kernels of the remaining terms
- i.e. for which $\boldsymbol{\xi }_{\ast }$ is associated to the opposite
species to $\boldsymbol{\xi }$ - from the gain parts of the collision
operators, are Hilbert-Schmidt.

\begin{lemma}
\label{L3} \cite{BGPS-13} Assume that $m_{\alpha }\neq m_{\beta }$, 
\begin{equation}
\left\{ 
\begin{array}{l}
\boldsymbol{\xi }^{\prime }=\boldsymbol{\xi }_{\ast }-\dfrac{m_{\alpha }}{%
m_{\beta }}\left\vert \boldsymbol{\xi }-\boldsymbol{\xi }_{\ast }^{\prime
}\right\vert \boldsymbol{\eta } \\ 
\boldsymbol{\xi }_{\ast }^{\prime }=\boldsymbol{\xi }-\left\vert \boldsymbol{%
\xi }-\boldsymbol{\xi }_{\ast }^{\prime }\right\vert \boldsymbol{\eta }%
\end{array}%
\right. \text{, where }\boldsymbol{\eta }\in \mathbb{S}^{2}\text{,}
\label{vrel2}
\end{equation}%
and%
\begin{equation}
m_{\alpha }\left\vert \boldsymbol{\xi }\right\vert ^{2}+m_{\beta }\left\vert 
\boldsymbol{\xi }^{\prime }\right\vert ^{2}=m_{\alpha }\left\vert 
\boldsymbol{\xi }_{\ast }^{\prime }\right\vert ^{2}+m_{\beta }\left\vert 
\boldsymbol{\xi }_{\ast }\right\vert ^{2}\text{.}  \label{vrel3}
\end{equation}%
Then there exists a positive number $\rho $, $0<\rho <1$, such that%
\begin{equation*}
m_{\beta }\left\vert \boldsymbol{\xi }^{\prime }\right\vert ^{2}+m_{\alpha
}\left\vert \boldsymbol{\xi }_{\ast }^{\prime }\right\vert ^{2}\geq \rho
\left( m_{\alpha }\left\vert \boldsymbol{\xi }\right\vert ^{2}+m_{\beta
}\left\vert \boldsymbol{\xi }_{\ast }\right\vert ^{2}\right) \text{.}
\end{equation*}
\end{lemma}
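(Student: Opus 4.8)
The plan is to reduce the claimed inequality to a homogeneous, finite-dimensional one by splitting the velocities into their components along and orthogonal to $\boldsymbol{\eta}$. First I would abbreviate $r=\left\vert\boldsymbol{\xi}-\boldsymbol{\xi}_{\ast}^{\prime}\right\vert$, so that $\left(\ref{vrel2}\right)$ reads $\boldsymbol{\xi}_{\ast}^{\prime}=\boldsymbol{\xi}-r\boldsymbol{\eta}$ and $\boldsymbol{\xi}^{\prime}=\boldsymbol{\xi}_{\ast}-\frac{m_{\alpha}}{m_{\beta}}r\boldsymbol{\eta}$. Substituting these into the energy relation $\left(\ref{vrel3}\right)$ and simplifying yields the single linear constraint $2\left(\boldsymbol{\xi}-\boldsymbol{\xi}_{\ast}\right)\cdot\boldsymbol{\eta}=r\frac{m_{\beta}-m_{\alpha}}{m_{\beta}}$, which determines $r$ in terms of $\boldsymbol{\xi},\boldsymbol{\xi}_{\ast}$ and $\boldsymbol{\eta}$; here the hypothesis $m_{\alpha}\neq m_{\beta}$ is first used.

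Next I would write $a=\boldsymbol{\xi}\cdot\boldsymbol{\eta}$, $b=\boldsymbol{\xi}_{\ast}\cdot\boldsymbol{\eta}$ and decompose $\boldsymbol{\xi}=a\boldsymbol{\eta}+\boldsymbol{\xi}_{\perp}$, $\boldsymbol{\xi}_{\ast}=b\boldsymbol{\eta}+\boldsymbol{\xi}_{\ast\perp}$. Since the correction terms in $\left(\ref{vrel2}\right)$ are parallel to $\boldsymbol{\eta}$, one has $\boldsymbol{\xi}_{\perp}^{\prime}=\boldsymbol{\xi}_{\ast\perp}$ and $\boldsymbol{\xi}_{\ast\perp}^{\prime}=\boldsymbol{\xi}_{\perp}$. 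The key observation is then that the orthogonal contributions to both sides coincide: both $m_{\beta}\left\vert\boldsymbol{\xi}^{\prime}\right\vert^{2}+m_{\alpha}\left\vert\boldsymbol{\xi}_{\ast}^{\prime}\right\vert^{2}$ and $m_{\alpha}\left\vert\boldsymbol{\xi}\right\vert^{2}+m_{\beta}\left\vert\boldsymbol{\xi}_{\ast}\right\vert^{2}$ contain the same term $m_{\alpha}\left\vert\boldsymbol{\xi}_{\perp}\right\vert^{2}+m_{\beta}\left\vert\boldsymbol{\xi}_{\ast\perp}\right\vert^{2}$ — it is precisely the mass swap in the statement that makes this happen. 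Writing this common nonnegative quantity as $P$, and the parallel parts as $E_{\parallel}=m_{\alpha}a^{2}+m_{\beta}b^{2}$ and $F_{\parallel}=m_{\alpha}\left(a-r\right)^{2}+m_{\beta}\bigl(b-\tfrac{m_{\alpha}}{m_{\beta}}r\bigr)^{2}$, it suffices to establish $F_{\parallel}\geq\rho\,E_{\parallel}$ for some $\rho<1$, since then $F_{\parallel}+P\geq\rho\,E_{\parallel}+P\geq\rho\left(E_{\parallel}+P\right)$.

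Using the constraint to express $r$ as a linear function of $a,b$, both $E_{\parallel}$ and $F_{\parallel}$ become quadratic forms in $\left(a,b\right)$, so the ratio $F_{\parallel}/E_{\parallel}$ is homogeneous of degree zero and it is enough to bound it below on the compact unit circle $a^{2}+b^{2}=1$, where it is continuous. I would then check that $F_{\parallel}$ vanishes only at $\left(a,b\right)=\left(0,0\right)$: setting both squares to zero forces $a=r$ and $b=\frac{m_{\alpha}}{m_{\beta}}r$, which combined with the constraint gives $m_{\alpha}=m_{\beta}$, excluded by hypothesis. Hence $F_{\parallel}$ is positive definite, its minimum over the circle is a strictly positive number $\rho_{0}$, and choosing $\rho=\min\left(\rho_{0},1/2\right)\in\left(0,1\right)$ completes the argument.

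The main obstacle — and the heart of the matter — is the reduction in the second paragraph: recognizing that the orthogonal components drop out of the comparison (so that only the one-dimensional, $\boldsymbol{\eta}$-directed dynamics matter) is what turns a problem about arbitrary vectors in $\mathbb{R}^{3}$ into a compactness statement for a ratio of two two-variable quadratic forms. Everything after that is routine; the only point requiring care is the positive definiteness of $F_{\parallel}$, which is exactly where the assumption $m_{\alpha}\neq m_{\beta}$ enters decisively.
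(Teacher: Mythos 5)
Your proof is correct, and its central reduction coincides with the paper's own: the appendix proof likewise decomposes all four velocities along and orthogonal to $\boldsymbol{\eta}$ (writing $\boldsymbol{\xi}=\mathbf{w}+r\boldsymbol{\eta}$, $\boldsymbol{\xi}_{\ast }^{\prime }=\mathbf{w}+\left( r-q\right) \boldsymbol{\eta}$, $\boldsymbol{\xi}_{\ast }=\widetilde{\mathbf{w}}+r_{\ast }\boldsymbol{\eta}$, $\boldsymbol{\xi}^{\prime }=\widetilde{\mathbf{w}}+\left( r_{\ast }-\tfrac{m_{\alpha }}{m_{\beta }}q\right) \boldsymbol{\eta}$) and exploits exactly the cancellation you call the mass swap: the common term $m_{\alpha }\left\vert \mathbf{w}\right\vert ^{2}+m_{\beta }\left\vert \widetilde{\mathbf{w}}\right\vert ^{2}$ appears on both sides, so only the $\boldsymbol{\eta}$-components compete. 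Where you genuinely differ is the endgame. The paper uses the energy relation to get $r_{\ast }=r+\tfrac{m_{\alpha }-m_{\beta }}{2m_{\beta }}q$, completes the square in $r$, and solves the resulting quadratic inequality in $\rho $ exactly; since its auxiliary quantities $a,b,c$ are all proportional to $q$, this produces the explicit, mass-ratio-only constant $\rho =1-2\Big/\Big( 1+\sqrt{1+\tfrac{\left( m_{\alpha }-m_{\beta }\right) ^{2}}{4m_{\alpha }m_{\beta }}}\Big) $, and the paper explicitly advertises this constructiveness as the point of redoing the proof from \cite{BGPS-13}. You instead eliminate $r$ via the linear constraint, observe that both parallel energies become quadratic forms in $\left( a,b\right) $, and conclude by homogeneity, compactness of the unit circle, and positive definiteness of $F_{\parallel }$ (the precise place where $m_{\alpha }\neq m_{\beta }$ enters, just as in the paper). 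Your route is softer and shorter but nonconstructive; it is still uniform, since the two quadratic forms depend only on the masses, so your $\rho $ is independent of $\boldsymbol{\xi },\boldsymbol{\xi }_{\ast },\boldsymbol{\eta }$, which is what the application in Section \ref{PT3} requires. Two cosmetic repairs: $\rho _{0}$ should be the minimum of the ratio $F_{\parallel }/E_{\parallel }$ on the circle rather than of $F_{\parallel }$ alone (harmless, since $E_{\parallel }\leq \max \left( m_{\alpha },m_{\beta }\right) $ there); and the energy relation also admits the branch $r=0$, where $r$ is not determined by your constraint, but then $\left( \boldsymbol{\xi }^{\prime },\boldsymbol{\xi }_{\ast }^{\prime }\right) =\left( \boldsymbol{\xi }_{\ast },\boldsymbol{\xi }\right) $ and the claimed inequality holds with equality, so that case is trivial.
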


An alternative - maybe more basic, in the sense that only very basic
calculations are used - to the proof of Lemma \ref{L3} in \cite{BGPS-13} is
accounted for in the appendix. The proof is constructive, in the way that an
explicit value of such a number $\rho $, namely%
\begin{equation*}
\rho =1-\frac{2}{1+\sqrt{1+\dfrac{\left( m_{\alpha }-m_{\beta }\right) ^{2}}{%
4m_{\alpha }m_{\beta }}}}\text{,}
\end{equation*}%
is produced in the proof.

Now we turn to the proof of Theorem \ref{Thm3}.

\begin{proof}
By first renaming $\left\{ \boldsymbol{\xi }_{\ast }\right\}
\leftrightarrows \left\{ \boldsymbol{\xi }^{\prime }\right\} $ and then $%
\left\{ \boldsymbol{\xi }_{\ast }\right\} \leftrightarrows \left\{ 
\boldsymbol{\xi }_{\ast }^{\prime }\right\} $ 
\begin{eqnarray*}
&&\int_{\left( \mathbb{R}^{3}\right) ^{3}}\left( M_{\beta \ast }M_{\alpha
}^{\prime }\right) ^{1/2}W_{\alpha \beta }(\boldsymbol{\xi },\boldsymbol{\xi 
}_{\ast }\left\vert \boldsymbol{\xi }^{\prime },\boldsymbol{\xi }_{\ast
}^{\prime }\right. )\,\,h_{\beta \ast }^{\prime }\,d\boldsymbol{\xi }_{\ast
}d\boldsymbol{\xi }^{\prime }d\boldsymbol{\xi }_{\ast }^{\prime } \\
&=&\int_{\left( \mathbb{R}^{3}\right) ^{3}}\left( M_{\beta }^{\prime
}M_{\alpha \ast }\right) ^{1/2}W_{\alpha \beta }(\boldsymbol{\xi },%
\boldsymbol{\xi }^{\prime }\left\vert \boldsymbol{\xi }_{\ast },\boldsymbol{%
\xi }_{\ast }^{\prime }\right. )\,h_{\beta \ast }^{\prime }\,d\boldsymbol{%
\xi }_{\ast }d\boldsymbol{\xi }^{\prime }d\boldsymbol{\xi }_{\ast }^{\prime }
\\
&=&\int_{\left( \mathbb{R}^{3}\right) ^{3}}\left( M_{\beta }^{\prime
}M_{\alpha \ast }^{\prime }\right) ^{1/2}W_{\alpha \beta }(\boldsymbol{\xi },%
\boldsymbol{\xi }^{\prime }\left\vert \boldsymbol{\xi }_{\ast }^{\prime },%
\boldsymbol{\xi }_{\ast }\right. )\,h_{\beta \ast }\,d\boldsymbol{\xi }%
_{\ast }d\boldsymbol{\xi }^{\prime }d\boldsymbol{\xi }_{\ast }^{\prime }%
\text{.}
\end{eqnarray*}%
Moreover, by renaming $\left\{ \boldsymbol{\xi }_{\ast }\right\}
\leftrightarrows \left\{ \boldsymbol{\xi }^{\prime }\right\} $, 
\begin{eqnarray*}
&&\int_{\left( \mathbb{R}^{3}\right) ^{3}}\left( M_{\beta \ast }M_{\beta
\ast }^{\prime }\right) ^{1/2}W_{\alpha \beta }(\boldsymbol{\xi },%
\boldsymbol{\xi }_{\ast }\left\vert \boldsymbol{\xi }^{\prime },\boldsymbol{%
\xi }_{\ast }^{\prime }\right. )\,h_{\alpha }^{\prime }\,d\boldsymbol{\xi }%
_{\ast }d\boldsymbol{\xi }^{\prime }d\boldsymbol{\xi }_{\ast }^{\prime } \\
&=&\int_{\left( \mathbb{R}^{3}\right) ^{3}}\left( M_{\beta }^{\prime
}M_{\beta \ast }^{\prime }\right) ^{1/2}W_{\alpha \beta }(\boldsymbol{\xi },%
\boldsymbol{\xi }^{\prime }\left\vert \boldsymbol{\xi }_{\ast },\boldsymbol{%
\xi }_{\ast }^{\prime }\right. )\,h_{\alpha \ast }\,d\boldsymbol{\xi }_{\ast
}d\boldsymbol{\xi }^{\prime }d\boldsymbol{\xi }_{\ast }^{\prime }\text{.}
\end{eqnarray*}%
It follows that%
\begin{eqnarray}
K_{\alpha }\left( h\right) &=&\sum\limits_{\beta =1}^{s}\int_{\mathbb{R}%
^{3}}k_{\alpha \beta }\left( \boldsymbol{\xi },\boldsymbol{\xi }_{\ast
}\right) \,h_{\ast }\,d\boldsymbol{\xi }_{\ast }\text{, where }  \notag \\
k_{\alpha \beta } &=&k_{\alpha \beta }^{\left( \alpha \right) }\mathbf{1}%
_{\alpha }+k_{\alpha \beta }^{\left( \beta \right) }\mathbf{1}_{\beta
}=k_{\alpha \beta }^{\left( \alpha \right) }\mathbf{1}_{\alpha }+\left(
k_{\alpha \beta 2}^{\left( \beta \right) }-k_{\alpha \beta 1}^{\left( \beta
\right) }\right) \mathbf{1}_{\beta }\text{; \ }  \notag \\
k_{\alpha \beta }h_{\ast } &=&k_{\alpha \beta }^{\left( \alpha \right)
}h_{\alpha \ast }+k_{\alpha \beta }^{\left( \beta \right) }h_{\beta \ast
}=k_{\alpha \beta }^{\left( \alpha \right) }h_{\alpha \ast }-k_{\alpha \beta
1}^{\left( \beta \right) }h_{\beta \ast }+k_{\alpha \beta 2}^{\left( \beta
\right) }h_{\beta \ast }\text{, with}  \notag \\
k_{\alpha \beta }^{\left( \alpha \right) }(\boldsymbol{\xi },\boldsymbol{\xi 
}_{\ast }) &=&\int_{\left( \mathbb{R}^{3}\right) ^{2}}\left( M_{\beta
}^{\prime }M_{\beta \ast }^{\prime }\right) ^{1/2}W_{\alpha \beta }(%
\boldsymbol{\xi },\boldsymbol{\xi }^{\prime }\left\vert \boldsymbol{\xi }%
_{\ast },\boldsymbol{\xi }_{\ast }^{\prime }\right. )\,d\boldsymbol{\xi }%
^{\prime }d\boldsymbol{\xi }_{\ast }^{\prime }\text{,}  \notag \\
k_{\alpha \beta 1}^{\left( \beta \right) }(\boldsymbol{\xi },\boldsymbol{\xi 
}_{\ast }) &=&\int_{\left( \mathbb{R}^{3}\right) ^{2}}\left( M_{\alpha
}^{\prime }M_{\beta \ast }^{\prime }\right) ^{1/2}W_{\alpha \beta }(%
\boldsymbol{\xi },\boldsymbol{\xi }_{\ast }\left\vert \boldsymbol{\xi }%
^{\prime },\boldsymbol{\xi }_{\ast }^{\prime }\right. )\,d\boldsymbol{\xi }%
^{\prime }d\boldsymbol{\xi }_{\ast }^{\prime }\text{, and}  \notag \\
k_{\alpha \beta 2}^{\left( \beta \right) }(\boldsymbol{\xi },\boldsymbol{\xi 
}_{\ast }) &=&\int_{\left( \mathbb{R}^{3}\right) ^{2}}\left( M_{\alpha \ast
}^{\prime }M_{\beta }^{\prime }\right) ^{1/2}W_{\alpha \beta }\left( 
\boldsymbol{\xi },\boldsymbol{\xi }^{\prime }\left\vert \boldsymbol{\xi }%
_{\ast }^{\prime },\boldsymbol{\xi }_{\ast }\right. \right) \,d\boldsymbol{%
\xi }^{\prime }d\boldsymbol{\xi }_{\ast }^{\prime }\text{.}  \label{k2}
\end{eqnarray}

Note that, by applying the second relation in $\left( \ref{rel3}\right) $
and renaming $\left\{ \boldsymbol{\xi }^{\prime }\right\} \leftrightarrows
\left\{ \boldsymbol{\xi }_{\ast }^{\prime }\right\} $,%
\begin{eqnarray}
k_{\alpha \beta }^{\left( \alpha \right) }(\boldsymbol{\xi },\boldsymbol{\xi 
}_{\ast }) &=&\int_{\left( \mathbb{R}^{3}\right) ^{2}}\left( M_{\beta
}^{\prime }M_{\beta \ast }^{\prime }\right) ^{1/2}W_{\alpha \beta }(%
\boldsymbol{\xi }_{\ast },\boldsymbol{\xi }_{\ast }^{\prime }\left\vert 
\boldsymbol{\xi },\boldsymbol{\xi }^{\prime }\right. )\,d\boldsymbol{\xi }%
^{\prime }d\boldsymbol{\xi }_{\ast }^{\prime }  \notag \\
&=&\int_{\left( \mathbb{R}^{3}\right) ^{2}}\left( M_{\beta }^{\prime
}M_{\beta \ast }^{\prime }\right) ^{1/2}W_{\alpha \beta }(\boldsymbol{\xi }%
_{\ast },\boldsymbol{\xi }^{\prime }\left\vert \boldsymbol{\xi },\boldsymbol{%
\xi }_{\ast }^{\prime }\right. )\,d\boldsymbol{\xi }^{\prime }d\boldsymbol{%
\xi }_{\ast }^{\prime }  \notag \\
&=&k_{\alpha \beta }^{\left( \alpha \right) }(\boldsymbol{\xi }_{\ast },%
\boldsymbol{\xi })\text{.}  \label{sa3}
\end{eqnarray}%
Moreover,%
\begin{equation}
k_{\alpha \beta }^{\left( \beta \right) }(\boldsymbol{\xi },\boldsymbol{\xi }%
_{\ast })=k_{\beta \alpha 1}^{\left( \alpha \right) }(\boldsymbol{\xi }%
_{\ast },\boldsymbol{\xi })-k_{\beta \alpha 2}^{\left( \alpha \right) }(%
\boldsymbol{\xi }_{\ast },\boldsymbol{\xi })=k_{\beta \alpha }^{\left(
\alpha \right) }(\boldsymbol{\xi }_{\ast },\boldsymbol{\xi }),  \label{sa4}
\end{equation}%
since, by applying the first relation in $\left( \ref{rel3}\right) $ and
renaming $\left\{ \boldsymbol{\xi }^{\prime }\right\} \leftrightarrows
\left\{ \boldsymbol{\xi }_{\ast }^{\prime }\right\} $, 
\begin{eqnarray*}
k_{\alpha \beta 1}^{\left( \beta \right) }(\boldsymbol{\xi },\boldsymbol{\xi 
}_{\ast }) &=&\int_{\left( \mathbb{R}^{3}\right) ^{2}}\left( M_{\alpha
}^{\prime }M_{\beta \ast }^{\prime }\right) ^{1/2}W_{\beta \alpha }(%
\boldsymbol{\xi }_{\ast },\boldsymbol{\xi }\left\vert \boldsymbol{\xi }%
_{\ast }^{\prime },\boldsymbol{\xi }^{\prime }\right. )\,d\boldsymbol{\xi }%
^{\prime }d\boldsymbol{\xi }_{\ast }^{\prime } \\
&=&\int_{\left( \mathbb{R}^{3}\right) ^{2}}\left( M_{\alpha \ast }^{\prime
}M_{\beta }^{\prime }\right) ^{1/2}W_{\beta \alpha }(\boldsymbol{\xi }_{\ast
},\boldsymbol{\xi }\left\vert \boldsymbol{\xi }^{\prime },\boldsymbol{\xi }%
_{\ast }^{\prime }\right. )\,d\boldsymbol{\xi }^{\prime }d\boldsymbol{\xi }%
_{\ast }^{\prime } \\
&=&k_{\beta \alpha 1}^{\left( \alpha \right) }(\boldsymbol{\xi }_{\ast },%
\boldsymbol{\xi })\text{,}
\end{eqnarray*}%
while, by applying the two first relations in $\left( \ref{rel3}\right) $
and renaming $\left\{ \boldsymbol{\xi }^{\prime }\right\} \leftrightarrows
\left\{ \boldsymbol{\xi }_{\ast }^{\prime }\right\} $,%
\begin{eqnarray*}
k_{\alpha \beta 2}^{\left( \beta \right) }(\boldsymbol{\xi },\boldsymbol{\xi 
}_{\ast }) &=&\int_{\left( \mathbb{R}^{3}\right) ^{2}}\left( M_{\alpha \ast
}^{\prime }M_{\beta }^{\prime }\right) ^{1/2}W_{\beta \alpha }\left( 
\boldsymbol{\xi }^{\prime },\boldsymbol{\xi }\left\vert \boldsymbol{\xi }%
_{\ast },\boldsymbol{\xi }_{\ast }^{\prime }\right. \right) \,d\boldsymbol{%
\xi }^{\prime }d\boldsymbol{\xi }_{\ast }^{\prime } \\
&=&\int_{\left( \mathbb{R}^{3}\right) ^{2}}\left( M_{\alpha \ast }^{\prime
}M_{\beta }^{\prime }\right) ^{1/2}W_{\beta \alpha }\left( \boldsymbol{\xi }%
_{\ast },\boldsymbol{\xi }_{\ast }^{\prime }\left\vert \boldsymbol{\xi }%
^{\prime },\boldsymbol{\xi }\right. \right) \,d\boldsymbol{\xi }^{\prime }d%
\boldsymbol{\xi }_{\ast }^{\prime } \\
&=&\int_{\left( \mathbb{R}^{3}\right) ^{2}}\left( M_{\alpha }^{\prime
}M_{\beta \ast }^{\prime }\right) ^{1/2}W_{\beta \alpha }\left( \boldsymbol{%
\xi }_{\ast },\boldsymbol{\xi }^{\prime }\left\vert \boldsymbol{\xi }_{\ast
}^{\prime },\boldsymbol{\xi }\right. \right) \,d\boldsymbol{\xi }^{\prime }d%
\boldsymbol{\xi }_{\ast }^{\prime } \\
&=&k_{\beta \alpha 2}^{\left( \alpha \right) }(\boldsymbol{\xi }_{\ast },%
\boldsymbol{\xi })\text{.}
\end{eqnarray*}

\begin{figure}[h]
\centering
\includegraphics[width=0.6\textwidth]{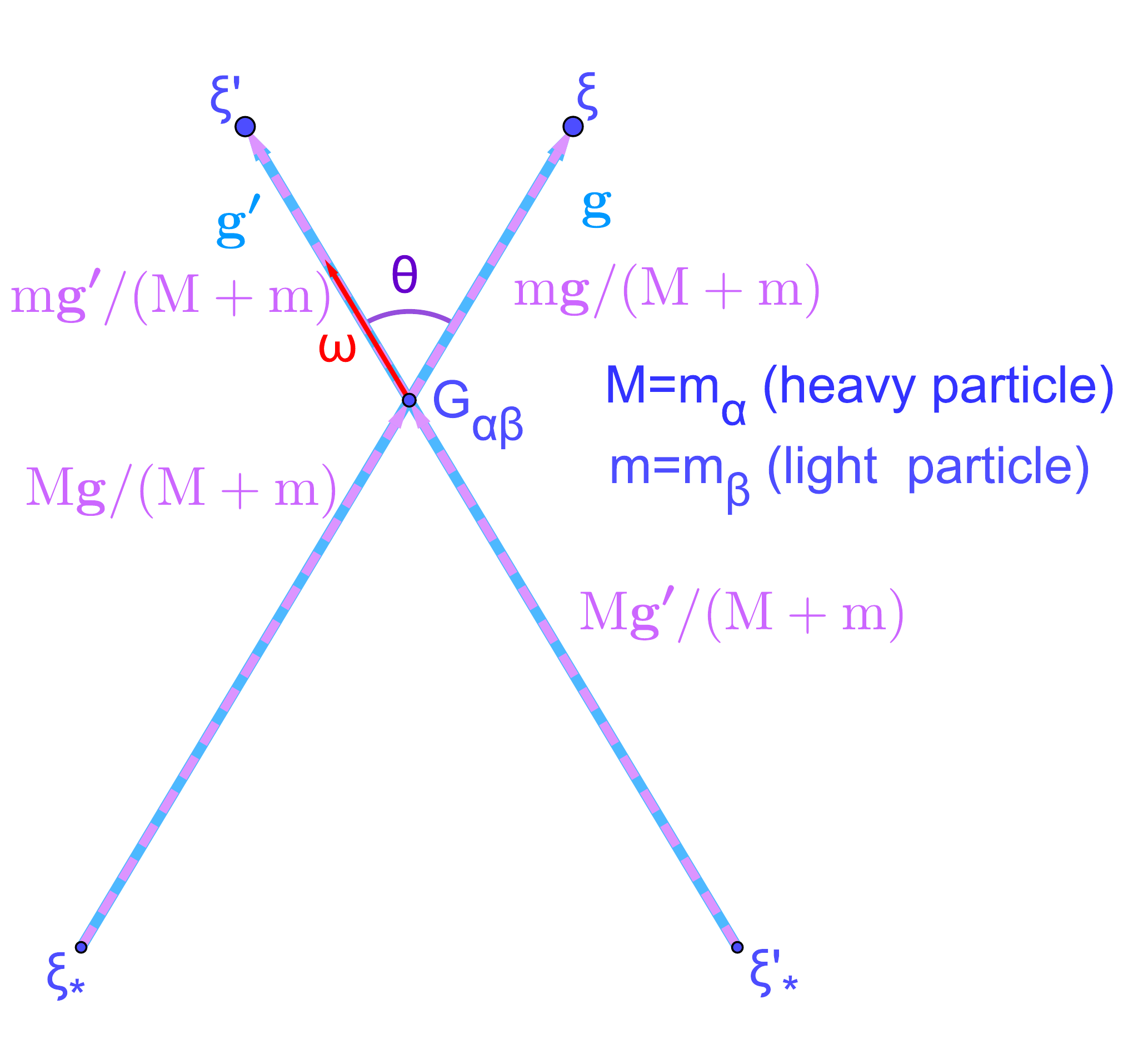}
\caption{Typical collision of $K_{\protect\alpha \protect\beta }^{(1)}$.
Classical representation of a collision between particles of different
species.}
\label{fig3}
\end{figure}

We now continue by proving the compactness for the three different types of
collision kernel separately. \ Note that, by applying the last relation in $%
\left( \ref{rel3}\right) $, $k_{\alpha \beta 2}^{\left( \beta \right) }(%
\boldsymbol{\xi },\boldsymbol{\xi }_{\ast })=k_{\alpha \beta }^{\left(
\alpha \right) }(\boldsymbol{\xi },\boldsymbol{\xi }_{\ast })$ if $\alpha
=\beta $, and we will remain with only two cases - the first two below. Even
if $m_{\alpha }=m_{\beta }$, the kernels $k_{\alpha \beta }^{\left( \alpha
\right) }(\boldsymbol{\xi },\boldsymbol{\xi }_{\ast })$ and $k_{\alpha \beta
2}^{\left( \beta \right) }(\boldsymbol{\xi },\boldsymbol{\xi }_{\ast })$ are
structurally equal, why we (in principle) remains with (first) two cases
(the second one twice).

\textbf{I. Compactness of }$K_{\alpha \beta }^{(1)}=\int_{\mathbb{R}%
^{3}}k_{\alpha \beta 1}^{\left( \beta \right) }(\boldsymbol{\xi },%
\boldsymbol{\xi }_{\ast })\,h_{\beta \ast }\,d\boldsymbol{\xi }_{\ast }$.

By the change of variables $\left\{ \boldsymbol{\xi }^{\prime },\boldsymbol{%
\xi }_{\ast }^{\prime }\right\} \rightarrow \left\{ \mathbf{g}^{\prime }=%
\boldsymbol{\xi }^{\prime }-\boldsymbol{\xi }_{\ast }^{\prime },\mathbf{G}%
_{\alpha \beta }^{\prime }=\dfrac{m_{\alpha }\boldsymbol{\xi }^{\prime
}+m_{\beta }\boldsymbol{\xi }_{\ast }^{\prime }}{m_{\alpha }+m_{\beta }}%
\right\} $, cf. Figure $\ref{fig3}$, noting that $\left( \ref{df2}\right) $,
and using relation $\left( \ref{M2}\right) $, expression $\left( \ref{k2}%
\right) $ of $k_{\alpha \beta 1}^{\left( \beta \right) }$ may be transformed
to%
\begin{eqnarray*}
k_{\alpha \beta 1}^{\left( \beta \right) }(\boldsymbol{\xi },\boldsymbol{\xi 
}_{\ast }) &=&\left( M_{\alpha }M_{\beta \ast }\right) ^{1/2}\int_{\left( 
\mathbb{R}^{3}\right) ^{2}}\frac{\sigma _{\alpha \beta }}{\left\vert \mathbf{%
g}\right\vert }\delta _{3}\left( \mathbf{G}_{\alpha \beta }-\mathbf{G}%
_{\alpha \beta }^{\prime }\right) \delta _{1}\left( \left\vert \mathbf{g}%
\right\vert -\left\vert \mathbf{g}^{\prime }\right\vert \right) \,d\mathbf{G}%
_{\alpha \beta }^{\prime }d\mathbf{g}^{\prime } \\
&=&\left( M_{\alpha }M_{\beta \ast }\right) ^{1/2}\left\vert \mathbf{g}%
\right\vert \int_{\mathbb{S}^{2}}\sigma _{\alpha \beta }\left( \left\vert 
\mathbf{g}\right\vert ,\cos \theta \right) \,d\boldsymbol{\omega }\text{,
with }\cos \theta =\boldsymbol{\omega }\cdot \frac{\mathbf{g}}{\left\vert 
\mathbf{g}\right\vert }\text{.}
\end{eqnarray*}%
By assumption $\left( \ref{est2}\right) $ and\textbf{\ } 
\begin{equation*}
m_{\alpha }\left\vert \boldsymbol{\xi }\right\vert ^{2}+m_{\beta }\left\vert 
\boldsymbol{\xi }_{\ast }\right\vert ^{2}=\left( m_{\alpha }+m_{\beta
}\right) \left\vert \mathbf{G}_{\alpha \beta }\right\vert ^{2}+\mu _{\alpha
\beta }\left\vert \mathbf{g}\right\vert ^{2}\text{, with }\mu _{\alpha \beta
}=\frac{m_{\alpha }m_{\beta }}{m_{\alpha }+m_{\beta }}\text{,}
\end{equation*}%
the bound%
\begin{eqnarray}
\left( k_{\alpha \beta 1}^{\left( \beta \right) }(\boldsymbol{\xi },%
\boldsymbol{\xi }_{\ast })\right) ^{2} &\leq &CM_{\alpha }M_{\beta \ast
}\left( 1+\frac{1}{\left\vert \mathbf{g}\right\vert ^{2-\gamma }}\right)
^{2}\left\vert \mathbf{g}\right\vert ^{2}\int_{\mathbb{S}^{2}}\,d\boldsymbol{%
\omega }  \notag \\
&=&Ce^{-\left( m_{\alpha }+m_{\beta }\right) \left\vert \mathbf{G}_{\alpha
\beta }\right\vert ^{2}/2-\mu _{\alpha \beta }\left\vert \mathbf{g}%
\right\vert ^{2}/2}\left( \left\vert \mathbf{g}\right\vert +\frac{1}{%
\left\vert \mathbf{g}\right\vert ^{1-\gamma }}\right) ^{2}  \label{b3}
\end{eqnarray}%
may be obtained. Then, by applying the bound $\left( \ref{b3}\right) $ and
first changing variables of integration $\left\{ \boldsymbol{\xi },%
\boldsymbol{\xi }_{\ast }\right\} \rightarrow \left\{ \mathbf{g},\mathbf{G}%
_{\alpha \beta }\right\} $, with unitary Jacobian, and then to spherical
coordinates,%
\begin{eqnarray*}
&&\int_{\left( \mathbb{R}^{3}\right) ^{2}}\left( k_{\alpha \beta 1}^{\left(
\beta \right) }(\boldsymbol{\xi },\boldsymbol{\xi }_{\ast })\right) ^{2}d%
\boldsymbol{\xi }d\boldsymbol{\xi }_{\ast } \\
&\leq &C\int_{\left( \mathbb{R}^{3}\right) ^{2}}e^{-\left( m_{\alpha
}+m_{\beta }\right) \left\vert \mathbf{G}_{\alpha \beta }\right\vert
^{2}/2-\mu _{\alpha \beta }\left\vert \mathbf{g}\right\vert ^{2}/2}\left(
\left\vert \mathbf{g}\right\vert +\frac{1}{\left\vert \mathbf{g}\right\vert
^{1-\gamma }}\right) ^{2}d\mathbf{g}\boldsymbol{\,}d\mathbf{G}_{\alpha \beta
} \\
&\leq &C\int_{0}^{\infty }R^{2}e^{-R^{2}}dR\int_{0}^{\infty }\left(
r^{2}+r^{\gamma }\right) ^{2}e^{-r^{2}/4}dr \\
&\leq &C\int_{0}^{\infty }\left( 1+r^{4}\right) e^{-r^{2}/4}dr=C\text{.}
\end{eqnarray*}%
Hence,%
\begin{equation*}
K_{\alpha \beta }^{(1)}=\int_{\mathbb{R}^{3}}k_{\alpha \beta 1}^{\left(
\beta \right) }(\boldsymbol{\xi },\boldsymbol{\xi }_{\ast })\,h_{\beta \ast
}\,d\boldsymbol{\xi }_{\ast }
\end{equation*}%
are Hilbert-Schmidt integral operators and as such continuous and compact on 
$L^{2}\left( d\boldsymbol{\xi }\right) $, see e.g. Theorem 7.83 in \cite%
{RenardyRogers}, for all $\left\{ \alpha ,\beta \right\} \subseteq \left\{
1,...,s\right\} $.

\begin{figure}[h]
\centering
\includegraphics[width=0.6\textwidth]{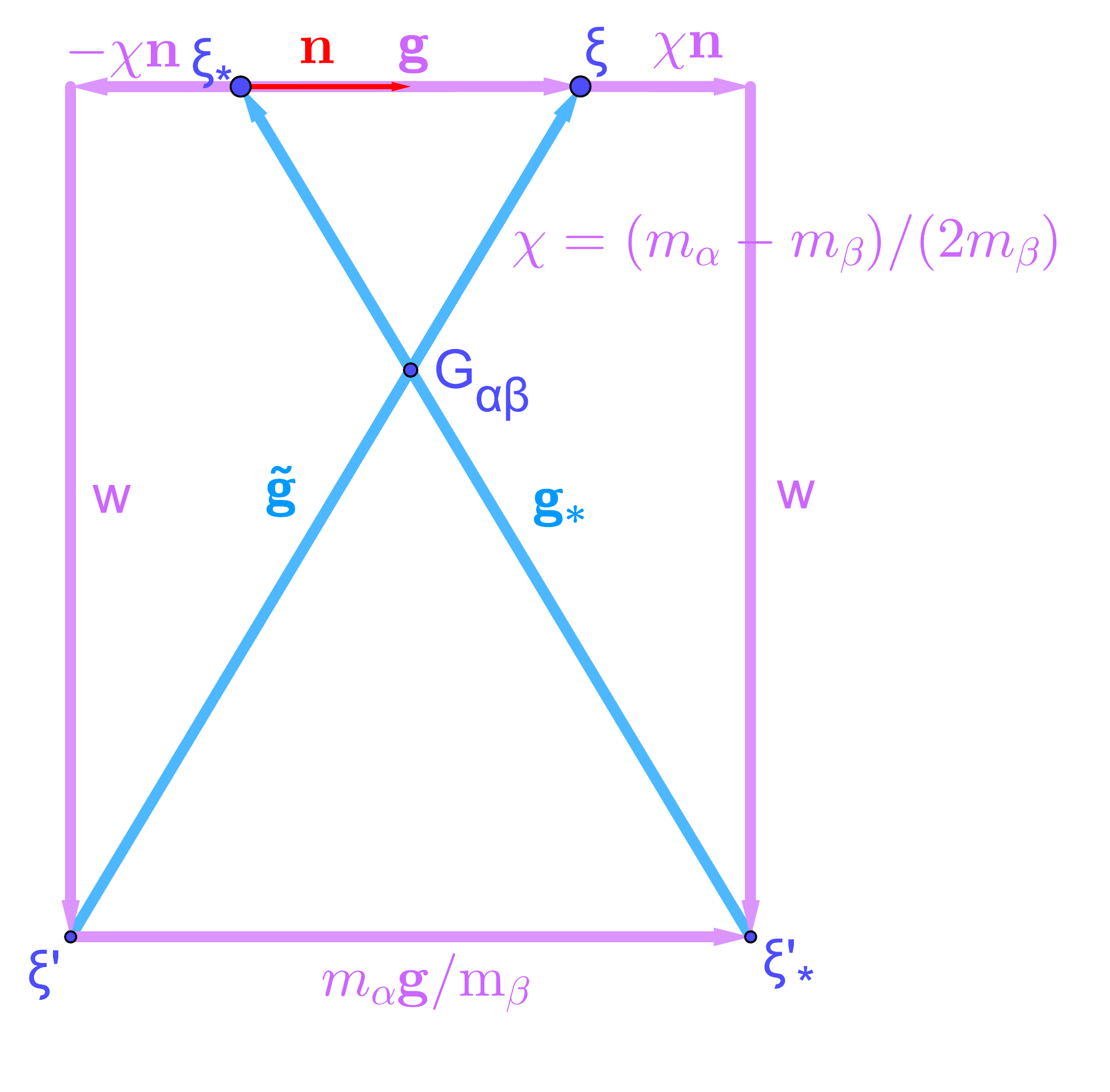}
\caption{Typical collision of $K_{\protect\alpha \protect\beta }^{(3)}$.}
\label{fig4}
\end{figure}

\textbf{II. Compactness of }$K_{\alpha \beta }^{(3)}=\int_{\mathbb{R}%
^{3}}k_{\alpha \beta }^{\left( \alpha \right) }(\boldsymbol{\xi },%
\boldsymbol{\xi }_{\ast })\,h_{\alpha \ast }\,d\boldsymbol{\xi }_{\ast }$.

Noting that, cf. Figure \ref{fig4},%
\begin{eqnarray*}
W_{\alpha \beta }(\boldsymbol{\xi },\boldsymbol{\xi }^{\prime }\left\vert 
\boldsymbol{\xi }_{\ast },\boldsymbol{\xi }_{\ast }^{\prime }\right. )
&=&2\left( m_{\alpha }+m_{\beta }\right) ^{2}m_{\alpha }m_{\beta }\sigma
_{\alpha \beta }\delta _{3}\left( m_{\alpha }\mathbf{g}+m_{\beta }\mathbf{g}%
^{\prime }\right) \\
&&\times \delta _{1}\left( 2\left\vert \mathbf{g}\right\vert m_{\alpha
}\left( \chi -\frac{m_{\alpha }-m_{\beta }}{2m_{\beta }}\left\vert \mathbf{g}%
\right\vert \right) \right) \\
&=&\frac{\left( m_{\alpha }+m_{\beta }\right) ^{2}}{\left\vert \mathbf{g}%
\right\vert m_{\beta }^{2}}\sigma _{\alpha \beta }\,\delta _{3}\!\left( 
\frac{m_{\alpha }}{m_{\beta }}\mathbf{g}+\mathbf{g}^{\prime }\right) \delta
_{1}\!\left( \chi -\frac{m_{\alpha }-m_{\beta }}{2m_{\beta }}\left\vert 
\mathbf{g}\right\vert \!\right) \text{,}
\end{eqnarray*}%
where $\mathbf{g}=\boldsymbol{\xi }-\boldsymbol{\xi }_{\ast }$, $\mathbf{g}%
^{\prime }=\boldsymbol{\xi }^{\prime }-\boldsymbol{\xi }_{\ast }^{\prime }$, 
$\chi =\left( \boldsymbol{\xi }_{\ast }^{\prime }-\boldsymbol{\xi }\right)
\cdot \mathbf{n}$, and $\mathbf{n}=\dfrac{\boldsymbol{\xi }-\boldsymbol{\xi }%
_{\ast }}{\left\vert \boldsymbol{\xi }-\boldsymbol{\xi }_{\ast }\right\vert }
$, by a change of variables $\left\{ \boldsymbol{\xi }^{\prime },\boldsymbol{%
\xi }_{\ast }^{\prime }\right\} \rightarrow \left\{ \mathbf{g}^{\prime }=%
\boldsymbol{\xi }^{\prime }-\boldsymbol{\xi }_{\ast }^{\prime },~\widetilde{%
\mathbf{h}}=\boldsymbol{\xi }_{\ast }^{\prime }-\boldsymbol{\xi }\right\} $,
with%
\begin{equation*}
d\boldsymbol{\xi }^{\prime }d\boldsymbol{\xi }_{\ast }^{\prime }=d\mathbf{g}%
^{\prime }d\widetilde{\mathbf{h}}=d\mathbf{g}^{\prime }d\chi d\mathbf{w}%
\text{, with }\mathbf{w}=\boldsymbol{\xi }_{\ast }^{\prime }-\boldsymbol{\xi 
}-\chi \mathbf{n}\text{,}
\end{equation*}%
the expression $\left( \ref{k2}\right) $ of $k_{\alpha \beta }^{\left(
\alpha \right) }$ may be rewritten in the following way%
\begin{eqnarray*}
k_{\alpha \beta }^{\left( \alpha \right) }(\boldsymbol{\xi },\boldsymbol{\xi 
}_{\ast }) &=&\int_{\left( \mathbb{R}^{3}\right) ^{2}}\left( M_{\beta
}^{\prime }M_{\beta \ast }^{\prime }\right) ^{1/2}W_{\alpha \beta }(%
\boldsymbol{\xi },\boldsymbol{\xi }^{\prime }\left\vert \boldsymbol{\xi }%
_{\ast },\boldsymbol{\xi }_{\ast }^{\prime }\right. )\,d\mathbf{g}^{\prime }d%
\widetilde{\mathbf{h}} \\
&=&\int_{\left( \mathbb{R}^{3}\right) ^{\perp _{\mathbf{n}}}}\frac{\left(
m_{\alpha }+m_{\beta }\right) ^{2}}{m_{\beta }^{2}}\frac{\left( M_{\beta
}^{\prime }M_{\beta \ast }^{\prime }\right) ^{1/2}}{\left\vert \mathbf{g}%
\right\vert }\sigma _{\alpha \beta }\,\left( \left\vert \widetilde{\mathbf{g}%
}\right\vert ,\frac{\widetilde{\mathbf{g}}\cdot \mathbf{g}_{\ast }}{%
\left\vert \widetilde{\mathbf{g}}\right\vert \left\vert \mathbf{g}_{\ast
}\right\vert }\right) d\mathbf{w} \\
\text{with }\widetilde{\mathbf{g}} &=&\boldsymbol{\xi }-\boldsymbol{\xi }%
^{\prime }\text{, and }\mathbf{g}_{\ast }=\boldsymbol{\xi }_{\ast }-%
\boldsymbol{\xi }_{\ast }^{\prime }\text{.}
\end{eqnarray*}%
Here, see Figure $\ref{fig4}$,%
\begin{equation*}
\left\{ 
\begin{array}{l}
\boldsymbol{\xi }^{\prime }=\boldsymbol{\xi }_{\ast }+\mathbf{w}-\chi 
\mathbf{n} \\ 
\boldsymbol{\xi }_{\ast }^{\prime }=\boldsymbol{\xi }+\mathbf{w}+\chi 
\mathbf{n}%
\end{array}%
\right. \text{, with }\chi =\frac{m_{\alpha }-m_{\beta }}{2m_{\beta }}%
\left\vert \boldsymbol{\xi }-\boldsymbol{\xi }_{\ast }\right\vert \text{,}
\end{equation*}%
implying that, reminding the notations $\left( \ref{o1}\right) $, 
\begin{eqnarray*}
\frac{\left\vert \boldsymbol{\xi }^{\prime }\right\vert ^{2}}{2}+\frac{%
\left\vert \boldsymbol{\xi }_{\ast }^{\prime }\right\vert ^{2}}{2}
&=&\left\vert \frac{\boldsymbol{\xi +\xi }_{\ast }}{2}+\mathbf{w}\right\vert
^{2}+\frac{\left( \left\vert \boldsymbol{\xi }-\boldsymbol{\xi }_{\ast
}\right\vert +2\chi \right) ^{2}}{4} \\
&=&\left\vert \frac{\left( \boldsymbol{\xi +\xi }_{\ast }\right) _{\perp _{%
\boldsymbol{n}}}}{2}+\mathbf{w}\right\vert ^{2}+\left( \frac{\left( 
\boldsymbol{\xi +\xi }_{\ast }\right) _{\mathbf{n}}}{2}\right) ^{2}+\frac{%
m_{\alpha }^{2}}{4m_{\beta }^{2}}\left\vert \boldsymbol{\xi }-\boldsymbol{%
\xi }_{\ast }\right\vert ^{2} \\
&=&\left\vert \frac{\left( \boldsymbol{\xi +\xi }_{\ast }\right) _{\perp _{%
\boldsymbol{n}}}}{2}+\mathbf{w}\right\vert ^{2}+\frac{\left( \left\vert 
\boldsymbol{\xi }_{\ast }\right\vert ^{2}-\left\vert \boldsymbol{\xi }%
\right\vert ^{2}\right) ^{2}}{4\left\vert \boldsymbol{\xi }-\boldsymbol{\xi }%
_{\ast }\right\vert ^{2}}+\frac{m_{\alpha }^{2}}{4m_{\beta }^{2}}\left\vert 
\boldsymbol{\xi }-\boldsymbol{\xi }_{\ast }\right\vert ^{2}.
\end{eqnarray*}%
Hence, by assumption $\left( \ref{est2}\right) $%
\begin{eqnarray}
&&\left( k_{\alpha \beta }^{\left( \alpha \right) }(\boldsymbol{\xi },%
\boldsymbol{\xi }_{\ast })\right) ^{2}  \notag \\
&\leq &\frac{C}{\left\vert \mathbf{g}\right\vert ^{2}}\exp \left( -\frac{%
m_{\beta }}{4}\frac{\left( \left\vert \boldsymbol{\xi }_{\ast }\right\vert
^{2}-\left\vert \boldsymbol{\xi }\right\vert ^{2}\right) ^{2}}{\left\vert 
\boldsymbol{\xi }-\boldsymbol{\xi }_{\ast }\right\vert ^{2}}-\frac{m_{\alpha
}^{2}}{4m_{\beta }}\left\vert \boldsymbol{\xi }-\boldsymbol{\xi }_{\ast
}\right\vert ^{2}\right)  \notag \\
&&\times \left( \int_{\left( \mathbb{R}^{3}\right) ^{\perp _{\mathbf{n}%
}}}\left( 1+\frac{1}{\left\vert \widetilde{\mathbf{g}}\right\vert ^{2-\gamma
}}\right) \exp \left( -\frac{m_{\beta }}{2}\left\vert \frac{\left( 
\boldsymbol{\xi +\xi }_{\ast }\right) _{\perp _{\boldsymbol{n}}}}{2}+\mathbf{%
w}\right\vert ^{2}\right) \,d\mathbf{w}\right) ^{2}  \notag \\
&\leq &\frac{C}{\left\vert \mathbf{g}\right\vert ^{2}}\exp \left( -\frac{%
m_{\beta }}{4}\frac{\left( \left\vert \boldsymbol{\xi }_{\ast }\right\vert
^{2}-\left\vert \boldsymbol{\xi }\right\vert ^{2}\right) ^{2}}{\left\vert 
\boldsymbol{\xi }-\boldsymbol{\xi }_{\ast }\right\vert ^{2}}-\frac{m_{\alpha
}^{2}}{4m_{\beta }}\left\vert \boldsymbol{\xi }-\boldsymbol{\xi }_{\ast
}\right\vert ^{2}\right)  \notag \\
&=&\frac{C}{\left\vert \mathbf{g}\right\vert ^{2}}\exp \left( -\frac{%
m_{\beta }}{4}\frac{\left( \left\vert \mathbf{g}\right\vert ^{2}+2\mathbf{g}%
\cdot \boldsymbol{\xi }\right) ^{2}}{\left\vert \mathbf{g}\right\vert ^{2}}-%
\frac{m_{\alpha }^{2}}{4m_{\beta }}\left\vert \mathbf{g}\right\vert
^{2}\right)  \notag \\
&=&\frac{C}{\left\vert \mathbf{g}\right\vert ^{2}}\exp \left( -m_{\beta
}\left( \dfrac{\left\vert \mathbf{g}\right\vert }{2}+\left\vert \boldsymbol{%
\xi }\right\vert \cos \varphi \right) ^{2}-\frac{m_{\alpha }^{2}}{4m_{\beta }%
}\left\vert \mathbf{g}\right\vert ^{2}\right) \text{, }\cos \varphi =\frac{%
\mathbf{g}\cdot \boldsymbol{\xi }}{\left\vert \mathbf{g}\right\vert
\left\vert \boldsymbol{\xi }\right\vert }\text{,}  \label{b4}
\end{eqnarray}%
since,%
\begin{eqnarray*}
&&\int_{\left( \mathbb{R}^{3}\right) ^{\perp _{\mathbf{n}}}}\left( 1+\frac{1%
}{\left\vert \widetilde{\mathbf{g}}\right\vert ^{2-\gamma }}\right) \exp
\left( -\frac{m_{\beta }}{2}\left\vert \frac{\left( \boldsymbol{\xi +\xi }%
_{\ast }\right) _{\perp _{\boldsymbol{n}}}}{2}+\mathbf{w}\right\vert
^{2}\right) \,d\mathbf{w} \\
&\leq &\int_{\left\vert \mathbf{w}\right\vert \leq 1}1+\left\vert \mathbf{w}%
\right\vert ^{\gamma -2}\,d\mathbf{w}+2\int_{\left\vert \mathbf{w}%
\right\vert \geq 1}\exp \left( -\frac{m_{\beta }}{2}\left\vert \frac{\left( 
\boldsymbol{\xi +\xi }_{\ast }\right) _{\perp _{\boldsymbol{n}}}}{2}+\mathbf{%
w}\right\vert ^{2}\right) \,d\mathbf{w} \\
&\leq &\int_{\left\vert \mathbf{w}\right\vert \leq 1}1+\left\vert \mathbf{w}%
\right\vert ^{\gamma -2}\,d\mathbf{w}+2\int_{\left( \mathbb{R}^{3}\right)
^{\perp _{\mathbf{n}}}}e^{-m_{\beta }\left\vert \widetilde{\mathbf{w}}%
\right\vert ^{2}/2}\,d\widetilde{\mathbf{w}} \\
&=&2\pi \left( \int_{0}^{1}R+R^{\gamma -1}\,dr+2\int_{0}^{\infty
}Re^{-m_{\beta }R^{2}/2}\,dr\right) =C\text{.}
\end{eqnarray*}

To show that $k_{\alpha \beta }^{\left( \alpha \right) }(\boldsymbol{\xi },%
\boldsymbol{\xi }_{\ast })\mathbf{1}_{\mathfrak{h}_{N}}\in L^{2}\left( d%
\boldsymbol{\xi \,}d\boldsymbol{\xi }_{\ast }\right) $ for any (non-zero)
natural number $N$, separate the integration domain\ of the integral of $%
\left( k_{\alpha \beta }^{\left( \alpha \right) }(\boldsymbol{\xi },%
\boldsymbol{\xi }_{\ast })\right) ^{2}$ over $\left( \mathbb{R}^{3}\right)
^{2}$ in two separate domains $\left\{ \left( \mathbb{R}^{3}\right) ^{2}%
\text{; }\left\vert \mathbf{g}\right\vert \geq \left\vert \boldsymbol{\xi }%
\right\vert \right\} $ and $\left\{ \left( \mathbb{R}^{3}\right) ^{2}\text{; 
}\left\vert \mathbf{g}\right\vert \leq \left\vert \boldsymbol{\xi }%
\right\vert \right\} $. The integral of $\left( k_{\alpha \beta }^{\left(
\alpha \right) }\right) ^{2}$ over the domain $\left\{ \left( \mathbb{R}%
^{3}\right) ^{2}\text{; }\left\vert \mathbf{g}\right\vert \geq \left\vert 
\boldsymbol{\xi }\right\vert \right\} $ will be bounded, since

\begin{eqnarray*}
&&\int_{\left\vert \mathbf{g}\right\vert \geq \left\vert \boldsymbol{\xi }%
\right\vert }\left( k_{\alpha \beta }^{\left( \alpha \right) }(\boldsymbol{%
\xi },\boldsymbol{\xi }_{\ast })\right) ^{2}\,d\boldsymbol{\xi \,}d%
\boldsymbol{\xi }_{\ast } \\
&\leq &\int_{\left\vert \mathbf{g}\right\vert \geq \left\vert \boldsymbol{%
\xi }\right\vert }\frac{C}{\left\vert \mathbf{g}\right\vert ^{2}}%
e^{-m_{\alpha }^{2}\left\vert \mathbf{g}\right\vert ^{2}/\left( 4m_{\beta
}\right) }d\mathbf{g}\boldsymbol{\,}d\boldsymbol{\xi } \\
&=&C\int_{0}^{\infty }\int_{\eta }^{\infty }e^{-m_{\alpha }^{2}R^{2}/\left(
4m_{\beta }\right) }\eta ^{2}dR\boldsymbol{\,}d\eta  \\
&\leq &C\int_{0}^{\infty }e^{-m_{\alpha }^{2}R^{2}/\left( 8m_{\beta }\right)
}dR\int_{0}^{\infty }e^{-m_{\alpha }^{2}\eta ^{2}/\left( 4m_{\beta }\right)
}\eta ^{2}d\eta =C\text{.}
\end{eqnarray*}%
As for the second domain, consider the truncated domains $\left\{ \!\left( 
\mathbb{R}^{3}\right) ^{2}\!\!\text{;}\left\vert \mathbf{g}\right\vert \leq
\left\vert \boldsymbol{\xi }\right\vert \leq N\!\right\} $ for (non-zero)
natural numbers $N$. Then%
\begin{eqnarray*}
&&\int_{\left\vert \mathbf{g}\right\vert \leq \left\vert \boldsymbol{\xi }%
\right\vert \leq N}\left( k_{\alpha \beta }^{\left( \alpha \right) }(%
\boldsymbol{\xi },\boldsymbol{\xi }_{\ast })\right) ^{2}\,d\boldsymbol{\xi }d%
\boldsymbol{\xi }_{\ast } \\
&\leq &\int_{\left\vert \mathbf{g}\right\vert \leq \left\vert \boldsymbol{%
\xi }\right\vert \leq N}\frac{C}{\left\vert \mathbf{g}\right\vert ^{2}}\exp
\left( -m_{\beta }\left( \dfrac{\left\vert \mathbf{g}\right\vert }{2}%
+\left\vert \boldsymbol{\xi }\right\vert \cos \varphi \right) ^{2}-\frac{%
m_{\alpha }^{2}}{4m_{\beta }}\left\vert \mathbf{g}\right\vert ^{2}\right) \,d%
\boldsymbol{\xi \,\,}d\mathbf{g} \\
&=&C\int_{0}^{N}\int_{0}^{\zeta }\int_{0}^{\pi }\zeta ^{2}\exp \left(
-m_{\beta }\left( \dfrac{R}{2}+\zeta \cos \varphi \right) ^{2}-\frac{%
m_{\alpha }^{2}}{4m_{\beta }}R^{2}\right) \sin \varphi \,d\varphi 
\boldsymbol{\,}dR\boldsymbol{\,}d\zeta  \\
&=&C\int_{0}^{N}\int_{0}^{\zeta }\int_{R-2\zeta }^{R+2\zeta }\zeta
e^{-m_{\beta }\eta ^{2}/4}e^{-m_{\alpha }^{2}R^{2}/\left( 4m_{\beta }\right)
}d\eta \boldsymbol{\,}dR\boldsymbol{\,}d\zeta  \\
&\leq &C\int_{0}^{N}\zeta \,d\zeta \int_{0}^{\infty }e^{-m_{\alpha
}^{2}R^{2}/\left( 4m_{\beta }\right) }dR\int_{-\infty }^{\infty
}e^{-m_{\beta }\eta ^{2}/4}d\eta =CN^{2}\text{.}
\end{eqnarray*}%
Furthermore, the integral of $k_{\alpha \beta }^{\left( \alpha \right) }(%
\boldsymbol{\xi },\boldsymbol{\xi }_{\ast })$ with respect to $\boldsymbol{%
\xi }$ over $\mathbb{R}^{3}$ is bounded in $\boldsymbol{\xi }_{\ast }$.
Indeed, directly by the bound $\left( \ref{b4}\right) $ on $\left( k_{\alpha
\beta }^{\left( \alpha \right) }\right) ^{2}$,%
\begin{equation}
0\leq k_{\alpha \beta }^{\left( \alpha \right) }(\boldsymbol{\xi },%
\boldsymbol{\xi }_{\ast })\leq \frac{C}{\left\vert \mathbf{g}\right\vert }%
\exp \left( -\frac{m_{\beta }}{8}\left( \left\vert \mathbf{g}\right\vert
+2\left\vert \boldsymbol{\xi }\right\vert \cos \varphi \right) ^{2}-\frac{%
m_{\alpha }^{2}}{8m_{\beta }}\left\vert \mathbf{g}\right\vert ^{2}\right) 
\text{.}  \label{b8}
\end{equation}%
Then the following bound on the integral of $k_{\alpha \beta }^{\left(
\alpha \right) }$ with respect to $\boldsymbol{\xi }_{\ast }$ over the
domain $\left\{ \mathbb{R}^{3}\text{; }\left\vert \mathbf{g}\right\vert \geq
\left\vert \boldsymbol{\xi }\right\vert \right\} $ can be obtained for $%
\left\vert \boldsymbol{\xi }\right\vert \neq 0$ 
\begin{eqnarray*}
\int_{\left\vert \mathbf{g}\right\vert \geq \left\vert \boldsymbol{\xi }%
\right\vert }k_{\alpha \beta }^{\left( \alpha \right) }(\boldsymbol{\xi },%
\boldsymbol{\xi }_{\ast })\,d\boldsymbol{\xi }_{\ast } &\leq &\frac{C}{%
\left\vert \boldsymbol{\xi }\right\vert }\int_{\left\vert \mathbf{g}%
\right\vert \geq \left\vert \boldsymbol{\xi }\right\vert }e^{-m_{\alpha
}^{2}\left\vert \mathbf{g}\right\vert ^{2}/\left( 8m_{\beta }\right) }d%
\mathbf{g} \\
&=&\frac{C}{\left\vert \boldsymbol{\xi }\right\vert }\int_{\left\vert 
\boldsymbol{\xi }\right\vert }^{\infty }R^{2}e^{-m_{\alpha }^{2}R^{2}/\left(
8m_{\beta }\right) }dR \\
&\leq &\frac{C}{\left\vert \boldsymbol{\xi }\right\vert }\int_{0}^{\infty
}R^{2}e^{-m_{\alpha }^{2}R^{2}/\left( 8m_{\beta }\right) }\,dR=\frac{C}{%
\left\vert \boldsymbol{\xi }\right\vert }\text{,}
\end{eqnarray*}%
as well as, over the domain $\left\{ \mathbb{R}^{3}\text{; }\left\vert 
\mathbf{g}\right\vert \leq \left\vert \boldsymbol{\xi }\right\vert \right\} $
\begin{eqnarray*}
&&\int_{\left\vert \mathbf{g}\right\vert \leq \left\vert \boldsymbol{\xi }%
\right\vert }k_{\alpha \beta }^{\left( \alpha \right) }(\boldsymbol{\xi },%
\boldsymbol{\xi }_{\ast })\,d\boldsymbol{\xi }_{\ast } \\
&\leq &\int_{\left\vert \mathbf{g}\right\vert \leq \left\vert \boldsymbol{%
\xi }\right\vert }\frac{C}{\left\vert \mathbf{g}\right\vert }\exp \left( -%
\frac{m_{\beta }}{8}\left( \left\vert \mathbf{g}\right\vert +2\left\vert 
\boldsymbol{\xi }\right\vert \cos \varphi \right) ^{2}-\frac{m_{\alpha }^{2}%
}{8m_{\beta }}\left\vert \mathbf{g}\right\vert ^{2}\right) \,d\mathbf{g} \\
&=&C\int_{0}^{\left\vert \boldsymbol{\xi }\right\vert }\int_{0}^{\pi }R\exp
\left( -\frac{m_{\beta }}{8}\left( R+2\left\vert \boldsymbol{\xi }%
\right\vert \cos \varphi \right) ^{2}-\frac{m_{\alpha }^{2}}{8m_{\beta }}%
R^{2}\right) \sin \varphi \,d\varphi \boldsymbol{\,}dR \\
&=&\frac{C}{\left\vert \boldsymbol{\xi }\right\vert }\int_{0}^{\left\vert 
\boldsymbol{\xi }\right\vert }\int_{R-2\left\vert \boldsymbol{\xi }%
\right\vert }^{R+2\left\vert \boldsymbol{\xi }\right\vert }Re^{-m_{\beta
}\eta ^{2}/8}e^{-m_{\alpha }^{2}R^{2}/\left( 8m_{\beta }\right) }d\eta 
\boldsymbol{\,}dR \\
&\leq &\frac{C}{\left\vert \boldsymbol{\xi }\right\vert }\int_{0}^{\infty
}Re^{-m_{\alpha }^{2}R^{2}/\left( 8m_{\beta }\right) }\,dR\int_{-\infty
}^{\infty }e^{-m_{\beta }\eta ^{2}/8}d\eta =\frac{C}{\left\vert \boldsymbol{%
\xi }\right\vert }\text{.}
\end{eqnarray*}%
However, due to the symmetry $k_{\alpha \beta }^{\left( \alpha \right) }(%
\boldsymbol{\xi },\boldsymbol{\xi }_{\ast })=k_{\alpha \beta }^{\left(
\alpha \right) }(\boldsymbol{\xi }_{\ast },\boldsymbol{\xi })$ $\left( \ref%
{sa3}\right) $, also 
\begin{equation*}
\int_{\mathbb{R}^{3}}k_{\alpha \beta }^{\left( \alpha \right) }(\boldsymbol{%
\xi },\boldsymbol{\xi }_{\ast })\,d\boldsymbol{\xi }\leq \frac{C}{\left\vert 
\boldsymbol{\xi }_{\ast }\right\vert }\text{.}
\end{equation*}%
Therefore, if $\left\vert \boldsymbol{\xi }_{\ast }\right\vert \geq 1$, then%
\begin{equation*}
\int_{\mathbb{R}^{3}}k_{\alpha \beta }^{\left( \alpha \right) }(\boldsymbol{%
\xi },\boldsymbol{\xi }_{\ast })\,d\boldsymbol{\xi }\leq \frac{C}{\left\vert 
\boldsymbol{\xi }_{\ast }\right\vert }\leq C\text{.}
\end{equation*}%
Otherwise, if $\left\vert \boldsymbol{\xi }_{\ast }\right\vert \leq 1$,
then, by the bound $\left( \ref{b8}\right) $,%
\begin{eqnarray*}
&&\int_{\mathbb{R}^{3}}k_{\alpha \beta }^{\left( \alpha \right) }(%
\boldsymbol{\xi },\boldsymbol{\xi }_{\ast })\,d\boldsymbol{\xi =}\int_{%
\mathbb{R}^{3}}k_{\alpha \beta }^{\left( \alpha \right) }(\boldsymbol{\xi }%
_{\ast },\boldsymbol{\xi })\,d\boldsymbol{\xi } \\
&\leq &\int_{\mathbb{R}^{3}}\frac{C}{\left\vert \mathbf{g}\right\vert }\exp
\left( -\frac{m_{\beta }}{8}\left( \left\vert \mathbf{g}\right\vert
+2\left\vert \boldsymbol{\xi }_{\ast }\right\vert \cos \varphi \right) ^{2}-%
\frac{m_{\alpha }^{2}}{8m_{\beta }}\left\vert \mathbf{g}\right\vert
^{2}\right) \,d\mathbf{g} \\
&=&C\int_{0}^{\infty }\int_{0}^{\pi }R\exp \left( -\frac{m_{\beta }}{8}%
\left( R+2\left\vert \boldsymbol{\xi }_{\ast }\right\vert \cos \varphi
\right) ^{2}-\frac{m_{\alpha }^{2}}{8m_{\beta }}R^{2}\right) \sin \varphi
\,d\varphi \boldsymbol{\,}dR \\
&\leq &\frac{C}{\left\vert \boldsymbol{\xi }_{\ast }\right\vert }%
\int_{0}^{\infty }Re^{-m_{\alpha }^{2}R^{2}/\left( 8m_{\beta }\right)
}dR\int_{R-2\left\vert \boldsymbol{\xi }_{\ast }\right\vert }^{R+2\left\vert 
\boldsymbol{\xi }_{\ast }\right\vert }\,d\eta =C\text{.}
\end{eqnarray*}%
Furthermore, 
\begin{eqnarray*}
&&\sup_{\boldsymbol{\xi }\in \mathbb{R}^{3}}\int_{\mathbb{R}^{3}}k_{\alpha
\beta }^{\left( \alpha \right) }(\boldsymbol{\xi },\boldsymbol{\xi }_{\ast
})-k_{\alpha \beta }^{\left( \alpha \right) }(\boldsymbol{\xi },\boldsymbol{%
\xi }_{\ast })\mathbf{1}_{\mathfrak{h}_{N}}\,d\boldsymbol{\xi }_{\ast } \\
&\leq &\sup_{\boldsymbol{\xi }\in \mathbb{R}^{3}}\int_{\left\vert \mathbf{g}%
\right\vert \leq \frac{1}{N}}k_{\alpha \beta }^{\left( \alpha \right) }(%
\boldsymbol{\xi },\boldsymbol{\xi }_{\ast })\,d\boldsymbol{\xi }_{\ast
}+\sup_{\left\vert \boldsymbol{\xi }\right\vert \geq N}\int_{\mathbb{R}%
^{3}}k_{\alpha \beta }^{\left( \alpha \right) }(\boldsymbol{\xi },%
\boldsymbol{\xi }_{\ast })\,d\boldsymbol{\xi }_{\ast } \\
&\leq &\int_{\left\vert \mathbf{g}\right\vert \leq \frac{1}{N}}\frac{C}{%
\left\vert \mathbf{g}\right\vert }\,d\mathbf{g}+\frac{C}{N}\leq C\left(
\int_{0}^{\frac{1}{N}}R\,dR+\frac{1}{N}\right)  \\
&=&C\left( \frac{1}{N^{2}}+\frac{1}{N}\right) \rightarrow 0\text{ as }%
N\rightarrow \infty \text{.}
\end{eqnarray*}%
Hence, by Lemma \ref{LGD} the operators 
\begin{equation*}
K_{\alpha \beta }^{(3)}=\int_{\mathbb{R}^{3}}k_{\alpha \beta }^{\left(
\alpha \right) }(\boldsymbol{\xi },\boldsymbol{\xi }_{\ast })\,h_{\alpha
\ast }\,d\boldsymbol{\xi }_{\ast }
\end{equation*}%
are compact on $L^{2}\left( d\boldsymbol{\xi }\right) $ for all $\left\{
\alpha ,\beta \right\} \subseteq \left\{ 1,...,s\right\} $.

\begin{figure}[h]
\centering
\includegraphics[width=0.45\textwidth]{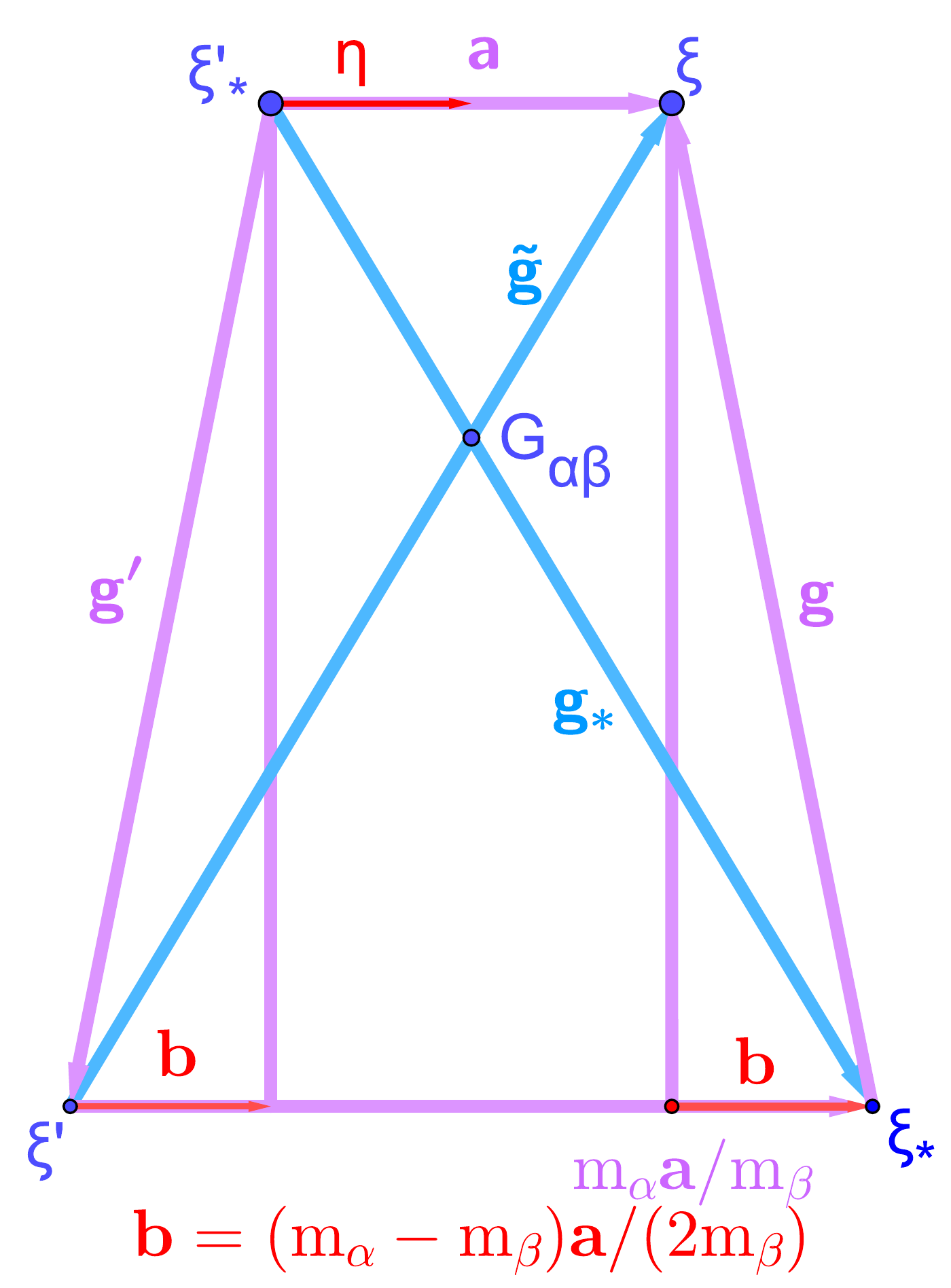}
\caption{Typical collision of $K_{\protect\alpha \protect\beta }^{(2)}$.}
\label{fig5}
\end{figure}

\textbf{III. Compactness of }$K_{\alpha \beta }^{(2)}=\int_{\mathbb{R}%
^{3}}k_{\alpha \beta 2}^{\left( \beta \right) }(\boldsymbol{\xi },%
\boldsymbol{\xi }_{\ast })\,h_{\beta \ast }\,d\boldsymbol{\xi }_{\ast }$.

Assume that $m_{\alpha }\neq m_{\beta }$. Noting that, cf. Figure \ref{fig5},%
\begin{eqnarray*}
W_{\alpha \beta }\left( \boldsymbol{\xi },\boldsymbol{\xi }^{\prime
}\left\vert \boldsymbol{\xi }_{\ast }^{\prime },\boldsymbol{\xi }_{\ast
}\right. \right) &=&2\left( m_{\alpha }+m_{\beta }\right) ^{2}m_{\alpha
}m_{\beta }\sigma _{\alpha \beta }\delta _{1}\left( \frac{m_{\alpha
}m_{\beta }}{m_{\alpha }-m_{\beta }}\left( \left\vert \mathbf{g}\right\vert
^{2}-\left\vert \mathbf{g}^{\prime }\right\vert ^{2}\right) \right) \\
&&\times \delta _{3}\left( \left( m_{\alpha }-m_{\beta }\right) \left( 
\mathbf{g}_{\alpha \beta }-\mathbf{g}_{\alpha \beta }^{\prime }\right)
\right) \\
&=&\frac{\left( m_{\alpha }+m_{\beta }\right) ^{2}}{\left( m_{\alpha
}-m_{\beta }\right) ^{2}}\frac{\sigma _{\alpha \beta }}{\left\vert \mathbf{g}%
\right\vert }\delta _{1}\left( \left\vert \mathbf{g}\right\vert -\left\vert 
\mathbf{g}^{\prime }\right\vert \right) \delta _{3}\left( \mathbf{g}_{\alpha
\beta }-\mathbf{g}_{\alpha \beta }^{\prime }\right) \text{, with} \\
&&\mathbf{g}_{\alpha \beta }=\dfrac{m_{\alpha }\boldsymbol{\xi }-m_{\beta }%
\boldsymbol{\xi }_{\ast }}{m_{\alpha }-m_{\beta }}\text{ and }\mathbf{g}%
_{\alpha \beta }^{\prime }=\dfrac{m_{\alpha }\boldsymbol{\xi }_{\ast
}^{\prime }-m_{\beta }\boldsymbol{\xi }^{\prime }}{m_{\alpha }-m_{\beta }}%
\text{,}
\end{eqnarray*}%
by a change of variables $\left\{ \boldsymbol{\xi }^{\prime },\boldsymbol{%
\xi }_{\ast }^{\prime }\right\} \rightarrow \left\{ \!\mathbf{g}^{\prime }=%
\boldsymbol{\xi }^{\prime }-\boldsymbol{\xi }_{\ast }^{\prime },\mathbf{g}%
_{\alpha \beta }^{\prime }=\dfrac{m_{\alpha }\boldsymbol{\xi }_{\ast
}^{\prime }-m_{\beta }\boldsymbol{\xi }^{\prime }}{m_{\alpha }-m_{\beta }}%
\!\right\} $, where%
\begin{equation*}
d\boldsymbol{\xi }^{\prime }d\boldsymbol{\xi }_{\ast }^{\prime }=d\mathbf{g}%
^{\prime }d\mathbf{g}_{\alpha \beta }^{\prime }=\left\vert \mathbf{g}%
^{\prime }\right\vert ^{2}d\left\vert \mathbf{g}^{\prime }\right\vert d%
\mathbf{g}_{\alpha \beta }^{\prime }d\boldsymbol{\omega }\text{, with\ }%
\boldsymbol{\omega }=\frac{\mathbf{g}^{\prime }}{\left\vert \mathbf{g}%
^{\prime }\right\vert }\text{,}
\end{equation*}%
the expression $\left( \ref{k2}\right) $ of $k_{\alpha \beta 2}^{\left(
\beta \right) }$ may be transformed to%
\begin{eqnarray*}
k_{\alpha \beta 2}^{\left( \beta \right) }(\boldsymbol{\xi },\boldsymbol{\xi 
}_{\ast }) &=&\int_{\left( \mathbb{R}^{3}\right) ^{2}}\left( M_{\beta
}^{\prime }M_{\alpha \ast }^{\prime }\right) ^{1/2}W_{\alpha \beta }\left( 
\boldsymbol{\xi },\boldsymbol{\xi }^{\prime }\left\vert \boldsymbol{\xi }%
_{\ast }^{\prime },\boldsymbol{\xi }_{\ast }\right. \right) \,d\mathbf{g}%
^{\prime }d\mathbf{g}_{\alpha \beta }^{\prime } \\
&=&\frac{\left( m_{\alpha }+m_{\beta }\right) ^{2}}{\left( m_{\alpha
}-m_{\beta }\right) ^{2}}\int_{\mathbb{S}^{2}}\left( M_{\beta }^{\prime
}M_{\alpha \ast }^{\prime }\right) ^{1/2}\left\vert \mathbf{g}\right\vert
\sigma _{\alpha \beta }\,\left( \left\vert \widetilde{\mathbf{g}}\right\vert
,-\frac{\widetilde{\mathbf{g}}\cdot \mathbf{g}_{\ast }}{\left\vert 
\widetilde{\mathbf{g}}\right\vert \left\vert \mathbf{g}_{\ast }\right\vert }%
\right) \,d\boldsymbol{\omega }\text{,} \\
\text{with} &&\mathbf{g}=\boldsymbol{\xi }-\boldsymbol{\xi }_{\ast }\text{, }%
\mathbf{g}^{\prime }=\boldsymbol{\xi }^{\prime }-\boldsymbol{\xi }_{\ast
}^{\prime }\text{, }\widetilde{\mathbf{g}}=\boldsymbol{\xi }-\boldsymbol{\xi 
}^{\prime }\text{ and }\mathbf{g}_{\ast }=\boldsymbol{\xi }_{\ast }-%
\boldsymbol{\xi }_{\ast }^{\prime }\text{.}
\end{eqnarray*}%
Here, see Figure \ref{fig5},%
\begin{equation*}
\left\{ 
\begin{array}{l}
\boldsymbol{\xi }^{\prime }=\boldsymbol{\xi }_{\ast }-\dfrac{m_{\alpha }}{%
m_{\beta }}\left\vert \boldsymbol{\xi }-\boldsymbol{\xi }_{\ast }^{\prime
}\right\vert \boldsymbol{\eta } \\ 
\boldsymbol{\xi }_{\ast }^{\prime }=\boldsymbol{\xi }-\left\vert \boldsymbol{%
\xi }-\boldsymbol{\xi }_{\ast }^{\prime }\right\vert \boldsymbol{\eta }%
\end{array}%
\right. \text{, }\boldsymbol{\eta }=\frac{\boldsymbol{\xi }-\boldsymbol{\xi }%
_{\ast }^{\prime }}{\left\vert \boldsymbol{\xi }-\boldsymbol{\xi }_{\ast
}^{\prime }\right\vert }\in \mathbb{S}^{2}\text{.}
\end{equation*}%
Then, by Lemma $\ref{L2}$, since relation $\left( \ref{vrel3}\right) $
follows by energy conservation, 
\begin{equation*}
m_{\beta }\left\vert \boldsymbol{\xi }^{\prime }\right\vert ^{2}+m_{\alpha
}\left\vert \boldsymbol{\xi }_{\ast }^{\prime }\right\vert ^{2}\geq \rho
\left( m_{\alpha }\left\vert \boldsymbol{\xi }\right\vert ^{2}+m_{\beta
}\left\vert \boldsymbol{\xi }_{\ast }\right\vert ^{2}\right) ,
\end{equation*}%
for some positive number $\rho $, $0<\rho <1$, and hence, by assumption $%
\left( \ref{est2}\right) $, noticing that $\left\vert \mathbf{g}\right\vert
\leq \left\vert \widetilde{\mathbf{g}}\right\vert $, cf. Figure \ref{fig5},
the bound%
\begin{eqnarray}
\left( k_{\alpha \beta 2}^{\left( \beta \right) }(\boldsymbol{\xi },%
\boldsymbol{\xi }_{\ast })\right) ^{2} &\leq &C\left( M_{\alpha }M_{\beta
\ast }\right) ^{\rho }\left( \int_{\mathbb{S}^{2}}\left( 1+\frac{1}{%
\left\vert \widetilde{\mathbf{g}}\right\vert ^{2-\gamma }}\right) \left\vert 
\mathbf{g}\right\vert \,d\boldsymbol{\omega }\right) ^{2}  \notag \\
&\leq &CM_{\alpha }^{\rho }M_{\beta \ast }^{\rho }\left( 1+\frac{1}{%
\left\vert \mathbf{g}\right\vert ^{2-\gamma }}\right) ^{2}\left\vert \mathbf{%
g}\right\vert ^{2}\left( \int_{\mathbb{S}^{2}}\,d\boldsymbol{\omega }\right)
^{2}  \notag \\
&=&CM_{\alpha }^{\rho }M_{\beta \ast }^{\rho }\left( \left\vert \mathbf{g}%
\right\vert +\frac{1}{\left\vert \mathbf{g}\right\vert ^{1-\gamma }}\right)
^{2}  \label{b6}
\end{eqnarray}%
may be obtained. Then, by applying the bound $\left( \ref{b6}\right) $ and
first changing variables of integration $\left\{ \boldsymbol{\xi },%
\boldsymbol{\xi }_{\ast }\right\} \rightarrow \left\{ \mathbf{g},\mathbf{G}%
_{\alpha \beta }\right\} $, with unitary Jacobian, and then to spherical
coordinates, 
\begin{eqnarray*}
&&\int_{\left( \mathbb{R}^{3}\right) ^{2}}\left( k_{\alpha \beta 2}^{\left(
\beta \right) }(\boldsymbol{\xi },\boldsymbol{\xi }_{\ast })\right) ^{2}d%
\boldsymbol{\xi \,}d\boldsymbol{\xi }_{\ast } \\
&\leq &C\int_{\left( \mathbb{R}^{3}\right) ^{2}}e^{-\rho \left( m_{\alpha
}+m_{\beta }\right) \left\vert \mathbf{G}_{\alpha \beta }\right\vert
^{2}/2-\mu _{\alpha \beta }\rho \left\vert \mathbf{g}\right\vert
^{2}/2}\left( \left\vert \mathbf{g}\right\vert +\frac{1}{\left\vert \mathbf{g%
}\right\vert ^{1-\gamma }}\right) ^{2}d\mathbf{g}\boldsymbol{\,}d\mathbf{G}%
_{\alpha \beta } \\
&\leq &C\int_{0}^{\infty }R^{2}e^{-R^{2}}dR\int_{0}^{\infty }\left(
r^{2}+r^{\gamma }\right) ^{2}e^{-r^{2}/4}dr \\
&\leq &C\int_{0}^{\infty }R^{2}e^{-R^{2}}dR\int_{0}^{\infty }\left(
1+r^{4}\right) e^{-r^{2}/4}dr=C\text{.}
\end{eqnarray*}%
Hence,%
\begin{equation*}
K_{\alpha \beta }^{(2)}=\int_{\mathbb{R}^{3}}k_{\alpha \beta 2}^{\left(
\beta \right) }(\boldsymbol{\xi },\boldsymbol{\xi }_{\ast })\,h_{\beta \ast
}\,d\boldsymbol{\xi }_{\ast }
\end{equation*}%
are Hilbert-Schmidt integral operators and as such continuous and compact on 
$L^{2}\left( d\boldsymbol{\xi }\right) $ \cite[Theorem 7.83]{RenardyRogers}
for all $\left\{ \alpha ,\beta \right\} \subseteq \left\{ 1,...,s\right\} $.

On the other hand, if $m_{\alpha }=m_{\beta }$, then 
\begin{eqnarray*}
k_{\alpha \beta 2}^{\left( \beta \right) }(\boldsymbol{\xi },\boldsymbol{\xi 
}_{\ast }) &=&\int_{\left( \mathbb{R}^{3}\right) ^{\perp _{\mathbf{n}%
}}}4\left( M_{\beta }^{\prime }M_{\alpha \ast }^{\prime }\right) ^{1/2}\frac{%
\sigma _{\alpha \beta }}{\left\vert \mathbf{g}\right\vert }\,\left(
\left\vert \widetilde{\mathbf{g}}\right\vert ,-\frac{\widetilde{\mathbf{g}}%
\cdot \mathbf{g}_{\ast }}{\left\vert \widetilde{\mathbf{g}}\right\vert
\left\vert \mathbf{g}_{\ast }\right\vert }\right) \boldsymbol{\,}d\mathbf{w}%
\text{,} \\
&&\text{with }\widetilde{\mathbf{g}}=\boldsymbol{\xi }-\boldsymbol{\xi }%
^{\prime }\text{ and }\mathbf{g}_{\ast }=\boldsymbol{\xi }_{\ast }-%
\boldsymbol{\xi }_{\ast }^{\prime }\text{.}
\end{eqnarray*}%
Here%
\begin{equation*}
\left\{ 
\begin{array}{l}
\boldsymbol{\xi }^{\prime }=\boldsymbol{\xi }_{\ast }+\mathbf{w} \\ 
\boldsymbol{\xi }_{\ast }^{\prime }=\boldsymbol{\xi }+\mathbf{w}%
\end{array}%
\right. \text{, with }\mathbf{w\perp g}\text{ and }\mathbf{g}=\boldsymbol{%
\xi }-\boldsymbol{\xi }_{\ast }\text{.}
\end{equation*}%
Then similar arguments to the ones for $k_{\alpha \beta }^{\left( \alpha
\right) }(\boldsymbol{\xi },\boldsymbol{\xi }_{\ast })$ (with $m_{\alpha
}=m_{\beta }$) above, can be applied.

Concluding, the operator 
\begin{equation*}
K=(K_{1},...,K_{s})=\sum\limits_{\beta =1}^{s}\left( (K_{1\beta
}^{(3)},...,K_{s\beta }^{(3)})-(K_{1\beta }^{(1)},...,K_{s\beta
}^{(1)})+(K_{1\beta }^{(2)},...,K_{s\beta }^{(2)})\right)
\end{equation*}%
is a compact self-adjoint operator on $\left( L^{2}\left( d\boldsymbol{\xi }%
\right) \right) ^{s}$. Self-adjointness is due to the symmetry relations $%
\left( \ref{sa3}\right) ,\left( \ref{sa4}\right) $, cf. \cite[p.198]%
{Yoshida-65}.
\end{proof}

\subsubsection{\label{PT4}Bounds on the collision frequency}

This section concerns the proof of Theorem \ref{Thm4}.

\begin{proof}
For a hard sphere model $\sigma _{\alpha \beta }=C_{\alpha \beta }$ for some
positive constant $C_{\alpha \beta }$ for any $\left\{ \alpha ,\beta
\right\} \subset \left\{ 1,...,s\right\} $. Then each collision frequency $%
\upsilon _{1},...,\upsilon _{s}$ equals%
\begin{eqnarray*}
\upsilon _{\alpha } &=&\sum\limits_{\beta =1}^{s}\int_{\left( \mathbb{R}%
^{3}\right) ^{3}}M_{\beta \ast }W_{\alpha \beta }(\boldsymbol{\xi },%
\boldsymbol{\xi }_{\ast }\left\vert \boldsymbol{\xi }^{\prime },\boldsymbol{%
\xi }_{\ast }^{\prime }\right. )\,d\boldsymbol{\xi }_{\ast }d\boldsymbol{\xi 
}^{\prime }d\boldsymbol{\xi }_{\ast }^{\prime } \\
&=&\sum\limits_{\beta =1}^{s}\int_{\left( \mathbb{R}^{3}\right)
^{3}}M_{\beta \ast }\frac{C_{\alpha \beta }}{\left\vert \mathbf{g}%
\right\vert }\delta _{3}\left( \mathbf{G}_{\alpha \beta }-\mathbf{G}_{\alpha
\beta }^{\prime }\right) \delta _{1}\left( \left\vert \mathbf{g}\right\vert
-\left\vert \mathbf{g}^{\prime }\right\vert \right) \,d\boldsymbol{\xi }%
_{\ast }d\mathbf{G}^{\prime }d\mathbf{g}^{\prime } \\
&=&\sum\limits_{\beta =1}^{s}\int_{\mathbb{R}^{3}}M_{\beta \ast }\left\vert 
\mathbf{g}\right\vert \,d\boldsymbol{\xi }_{\ast }\int_{\mathbb{S}^{2}}\,d%
\boldsymbol{\omega } \\
&=&\sum\limits_{\beta =1}^{s}n_{\beta }m_{\beta }\sqrt{\frac{2m_{\beta }}{%
\pi }}C_{\alpha \beta }\int_{\mathbb{R}^{3}}e^{-m_{\beta }\left\vert 
\boldsymbol{\xi }_{\ast }\right\vert ^{2}/2}\left\vert \boldsymbol{\xi }%
_{\ast }-\boldsymbol{\xi }\right\vert \,d\boldsymbol{\xi }_{\ast }\text{.}
\end{eqnarray*}%
Now consider the two different cases $\left\vert \boldsymbol{\xi }%
\right\vert \leq 1$ and $\left\vert \boldsymbol{\xi }\right\vert \geq 1$
separately. Firstly, if $\left\vert \boldsymbol{\xi }\right\vert \geq 1$,
then 
\begin{eqnarray*}
\upsilon _{\alpha } &\geq &C\sum\limits_{\beta =1}^{s}\int_{\mathbb{R}%
^{3}}e^{-m_{\beta }\left\vert \boldsymbol{\xi }_{\ast }\right\vert
^{2}}\left\vert \left\vert \boldsymbol{\xi }\right\vert -\left\vert 
\boldsymbol{\xi }_{\ast }\right\vert \right\vert \,d\boldsymbol{\xi }_{\ast }
\\
&\geq &C\sum\limits_{\beta =1}^{s}\int_{\left\vert \boldsymbol{\xi }_{\ast
}\right\vert \leq 1/2}e^{-m_{\beta }\left\vert \boldsymbol{\xi }_{\ast
}\right\vert ^{2}}\left( \left\vert \boldsymbol{\xi }\right\vert -\left\vert 
\boldsymbol{\xi }_{\ast }\right\vert \right) \,d\boldsymbol{\xi }_{\ast } \\
&\geq &C\sum\limits_{\beta =1}^{s}e^{-m_{\beta }/4}\frac{\left\vert 
\boldsymbol{\xi }\right\vert }{2}\int_{\left\vert \boldsymbol{\xi }_{\ast
}\right\vert \leq 1/2}\,d\boldsymbol{\xi }_{\ast }\geq C\left\vert 
\boldsymbol{\xi }\right\vert \geq C\left( 1+\left\vert \boldsymbol{\xi }%
\right\vert \right) \text{.}
\end{eqnarray*}%
Secondly, if $\left\vert \boldsymbol{\xi }\right\vert \leq 1$, then 
\begin{eqnarray*}
\upsilon _{\alpha } &\geq &C\sum\limits_{\beta =1}^{s}\int_{\mathbb{R}%
^{3}}e^{-m_{\beta }\left\vert \boldsymbol{\xi }_{\ast }\right\vert
^{2}}\left\vert \left\vert \boldsymbol{\xi }\right\vert -\left\vert 
\boldsymbol{\xi }_{\ast }\right\vert \right\vert \,d\boldsymbol{\xi }_{\ast }
\\
&\geq &C\sum\limits_{\beta =1}^{s}\int_{\left\vert \boldsymbol{\xi }_{\ast
}\right\vert \geq 2}e^{-m_{\beta }\left\vert \boldsymbol{\xi }_{\ast
}\right\vert ^{2}}\left( \left\vert \boldsymbol{\xi }_{\ast }\right\vert
-\left\vert \boldsymbol{\xi }\right\vert \right) \,d\boldsymbol{\xi }_{\ast }
\\
&\geq &C\sum\limits_{\beta =1}^{s}\int_{\left\vert \boldsymbol{\xi }_{\ast
}\right\vert \geq 2}e^{-m_{\beta }\left\vert \boldsymbol{\xi }_{\ast
}\right\vert ^{2}}\frac{\left\vert \boldsymbol{\xi }_{\ast }\right\vert }{2}%
\,d\boldsymbol{\xi }_{\ast } \\
&=&C\sum\limits_{\beta =1}^{s}\int_{2}^{\infty }r^{3}e^{-m_{\beta
}r^{2}}\,dr\int_{\mathbb{S}^{2}}\,d\boldsymbol{\omega }=C\geq C\left(
1+\left\vert \boldsymbol{\xi }\right\vert \right) \text{.}
\end{eqnarray*}%
Hence, there is a positive constant $\upsilon _{-}>0$, such that $\upsilon
_{\alpha }\geq \upsilon _{-}\left( 1+\left\vert \boldsymbol{\xi }\right\vert
\right) $ for all $\alpha \in \left\{ 1,...,s\right\} $ and $\boldsymbol{\xi 
}\in \mathbb{R}^{3}$.

On the other hand,%
\begin{eqnarray*}
\upsilon _{\alpha } &\leq &C\sum\limits_{\beta =1}^{s}\int_{\mathbb{R}%
^{3}}e^{-m_{\beta }\left\vert \boldsymbol{\xi }_{\ast }\right\vert
^{2}}\left( \left\vert \boldsymbol{\xi }\right\vert +\left\vert \boldsymbol{%
\xi }_{\ast }\right\vert \right) \,d\boldsymbol{\xi }_{\ast } \\
&=&C\left\vert \boldsymbol{\xi }\right\vert \sum\limits_{\beta =1}^{s}\int_{%
\mathbb{R}^{3}}e^{-m_{\beta }\left\vert \boldsymbol{\xi }_{\ast }\right\vert
^{2}}d\boldsymbol{\xi }_{\ast }+C\sum\limits_{\beta =1}^{s}\int_{\mathbb{R}%
^{3}}e^{-m_{\beta }\left\vert \boldsymbol{\xi }_{\ast }\right\vert
^{2}}\left\vert \boldsymbol{\xi }_{\ast }\right\vert \,d\boldsymbol{\xi }%
_{\ast } \\
&\leq &C\left( 1+\left\vert \boldsymbol{\xi }\right\vert \right) \text{.}
\end{eqnarray*}%
Hence, there is a positive constant $\upsilon _{+}>0$, such that $\upsilon
_{\alpha }\leq \upsilon _{+}\left( 1+\left\vert \boldsymbol{\xi }\right\vert
\right) $ for all $\alpha \in \left\{ 1,...,s\right\} $ and $\boldsymbol{\xi 
}\in \mathbb{R}^{3}$, and the theorem follows.
\end{proof}

\bibliographystyle{siamproc}
\bibliography{biblo1}

\section{Appendix}

This appendix concerns an alternative, to the one presented in \cite{BGPS-13}%
, proof of Lemma $\ref{L3}$.

\begin{proof}
Denote $q:=\left\vert \boldsymbol{\xi }-\boldsymbol{\xi }_{\ast }^{\prime
}\right\vert $. By the relations $\left( \ref{vrel2}\right) $, cf. Figure $%
\ref{fig5}$, 
\begin{equation*}
\left\{ 
\begin{array}{l}
\boldsymbol{\xi }=\mathbf{w}+r\boldsymbol{\eta } \\ 
\boldsymbol{\xi }_{\ast }^{\prime }=\mathbf{w}+\left( r-q\right) \boldsymbol{%
\eta }%
\end{array}%
\right. \text{, with }\mathbf{w}\perp \boldsymbol{\eta }\text{, \ }
\end{equation*}%
while%
\begin{equation*}
\left\{ 
\begin{array}{l}
\boldsymbol{\xi }^{\prime }=\widetilde{\mathbf{w}}+\left( r_{\ast }-\dfrac{%
m_{\alpha }}{m_{\beta }}q\right) \boldsymbol{\eta } \\ 
\boldsymbol{\xi }_{\ast }=\widetilde{\mathbf{w}}+r_{\ast }\boldsymbol{\eta }%
\end{array}%
\right. \text{, with }\widetilde{\mathbf{w}}\perp \boldsymbol{\eta }\text{,}
\end{equation*}%
where it follows by relation $\left( \ref{vrel3}\right) $ that 
\begin{equation*}
r_{\ast }=r+\frac{m_{\alpha }-m_{\beta }}{2m_{\beta }}q\text{,}
\end{equation*}%
implying that%
\begin{equation*}
\left\{ 
\begin{array}{l}
\boldsymbol{\xi }^{\prime }=\widetilde{\mathbf{w}}+\left( r-\dfrac{m_{\alpha
}+m_{\beta }}{2m_{\beta }}q\right) \boldsymbol{\eta } \\ 
\boldsymbol{\xi }_{\ast }=\widetilde{\mathbf{w}}+\left( r+\dfrac{m_{\alpha
}-m_{\beta }}{2m_{\beta }}q\right) \boldsymbol{\eta }%
\end{array}%
\right. \text{, with }\widetilde{\mathbf{w}}\perp \boldsymbol{\eta }\text{. }
\end{equation*}%
Then%
\begin{eqnarray*}
&&m_{\beta }\left\vert \boldsymbol{\xi }^{\prime }\right\vert ^{2}+m_{\alpha
}\left\vert \boldsymbol{\xi }_{\ast }^{\prime }\right\vert ^{2} \\
&=&m_{\alpha }\left\vert \mathbf{w}\right\vert ^{2}+m_{\beta }\left\vert 
\widetilde{\mathbf{w}}\right\vert ^{2}+\left( m_{\alpha }+m_{\beta }\right)
r^{2}-\left( 3m_{\alpha }+m_{\beta }\right) qr \\
&&+\frac{m_{\alpha }^{2}+6m_{\alpha }m_{\beta }+m_{\beta }^{2}}{4m_{\beta }}%
q^{2} \\
&=&m_{\alpha }\left\vert \mathbf{w}\right\vert ^{2}+m_{\beta }\left\vert 
\widetilde{\mathbf{w}}\right\vert ^{2}+\left( m_{\alpha }+m_{\beta }\right)
\left( r-\frac{3m_{\alpha }+m_{\beta }}{2\left( m_{\alpha }+m_{\beta
}\right) }q\right) ^{2} \\
&&+\frac{\left( m_{\alpha }-m_{\beta }\right) ^{2}}{m_{\alpha }+m_{\beta }}%
\frac{m_{\alpha }}{4m_{\beta }}q^{2}\text{,}
\end{eqnarray*}%
while%
\begin{eqnarray*}
&&m_{\alpha }\left\vert \boldsymbol{\xi }\right\vert ^{2}+m_{\beta
}\left\vert \boldsymbol{\xi }_{\ast }\right\vert ^{2} \\
&=&m_{\alpha }\left\vert \mathbf{w}\right\vert ^{2}+m_{\beta }\left\vert 
\widetilde{\mathbf{w}}\right\vert ^{2}+\left( m_{\alpha }+m_{\beta }\right)
r^{2}+\left( m_{\alpha }-m_{\beta }\right) qr+\frac{\left( m_{\alpha
}-m_{\beta }\right) ^{2}}{4m_{\beta }}q^{2} \\
&=&m_{\alpha }\left\vert \mathbf{w}\right\vert ^{2}+m_{\beta }\left\vert 
\widetilde{\mathbf{w}}\right\vert ^{2}+\left( m_{\alpha }+m_{\beta }\right)
\left( r+\frac{m_{\alpha }-m_{\beta }}{2\left( m_{\alpha }+m_{\beta }\right) 
}q\right) ^{2} \\
&&+\frac{\left( m_{\alpha }-m_{\beta }\right) ^{2}}{m_{\alpha }+m_{\beta }}%
\frac{m_{\alpha }}{4m_{\beta }}q^{2}\text{.}
\end{eqnarray*}%
Denote 
\begin{equation*}
a:=-\dfrac{3m_{\alpha }+m_{\beta }}{2\left( m_{\alpha }+m_{\beta }\right) }q%
\text{, }b:=\dfrac{m_{\alpha }-m_{\beta }}{2\left( m_{\alpha }+m_{\beta
}\right) }q\text{, and }c^{2}:=\dfrac{\left( m_{\alpha }-m_{\beta }\right)
^{2}}{4\left( m_{\alpha }+m_{\beta }\right) ^{2}}\dfrac{m_{\alpha }}{%
m_{\beta }}q^{2}.
\end{equation*}%
Then 
\begin{align*}
& \frac{m_{\beta }\left\vert \boldsymbol{\xi }^{\prime }\right\vert
^{2}+m_{\alpha }\left\vert \boldsymbol{\xi }_{\ast }^{\prime }\right\vert
^{2}-\rho \left( m_{\alpha }\left\vert \boldsymbol{\xi }\right\vert
^{2}+m_{\beta }\left\vert \boldsymbol{\xi }_{\ast }\right\vert ^{2}\right) }{%
m_{\alpha }+m_{\beta }} \\
\geq & \left( r+a\right) ^{2}-\rho \left( r+b\right) ^{2}+\left( 1-\rho
\right) c^{2} \\
=& \left( 1-\rho \right) \left( r^{2}+2\frac{a-\rho b}{1-\rho }r+\frac{%
a^{2}-\rho b^{2}}{1-\rho }+c^{2}\right) \\
=& \left( 1-\rho \right) \left( r+\frac{a-\rho b}{1-\rho }\right) ^{2}+\frac{%
c^{2}}{1-\rho }\left( \rho ^{2}-2\rho \left( 1+\frac{\left( a-b\right) ^{2}}{%
2c^{2}}\right) +1\right) \\
\geq & \frac{c^{2}}{1-\rho }\left( \rho ^{2}-2\rho \left( 1+\frac{\left(
a-b\right) ^{2}}{2c^{2}}\right) +1\right) \geq 0 \\
& \text{if }0\leq \rho \leq 1+\frac{\left( a-b\right) ^{2}}{2c^{2}}-\frac{1}{%
2c^{2}}\sqrt{\left( a-b\right) ^{4}+4c^{2}\left( a-b\right) ^{2}}<1\text{.}
\end{align*}%
Let%
\begin{eqnarray*}
\rho &=&1+\frac{\left( a-b\right) ^{2}}{2d^{2}}-\frac{1}{2c^{2}}\sqrt{\left(
a-b\right) ^{4}+4d^{2}\left( a-b\right) ^{2}} \\
&=&1+\frac{1-\sqrt{1+\dfrac{4d^{2}}{\left( a-b\right) ^{2}}}}{\dfrac{2d^{2}}{%
\left( a-b\right) ^{2}}}=1-\frac{2}{1+\sqrt{1+\dfrac{4d^{2}}{\left(
a-b\right) ^{2}}}} \\
&=&1-\frac{2}{1+\sqrt{1+\dfrac{\left( m_{\alpha }-m_{\beta }\right) ^{2}}{%
4m_{\alpha }m_{\beta }}}}>0\text{ if }m_{\alpha }\neq m_{\beta }\text{.}
\end{eqnarray*}
\end{proof}

\end{document}